\documentclass[parskip=full, titlepage=false, twoside=false]{scrartcl} 
\usepackage[utf8]{inputenc} 
\usepackage[T1]{fontenc} 
\usepackage[english]{babel}
\usepackage[autostyle=true]{csquotes}
\usepackage{lmodern}
\usepackage[pdfauthor={Jonathan Glöckle}, pdftitle={Initial data rigidity implies spacetime rigidity}]{hyperref} 
\usepackage{amsmath, amsthm, amssymb, bm, bbm, mathrsfs, mathtools}
\usepackage{extarrows}
\usepackage{tabulary,tabularx}
\usepackage{graphicx}
\usepackage{tikz-cd}
\usepackage{enumerate}
\usepackage[capitalise]{cleveref}
\usepackage{bbold}

\theoremstyle{plain} 
\newtheorem{Satz}{Theorem}[section] 
\newtheorem{SATZ}{Main Theorem} 
\newtheorem{Prop}[Satz]{Proposition} 
\newtheorem{Lem}[Satz]{Lemma}
\newtheorem{Kor}[Satz]{Corollary} 
\theoremstyle{definition}

\newtheorem{Bem}[Satz]{Remark}

\newtheorem{Set}[Satz]{Setup}
\newtheorem{Frag}[Satz]{Question}

\crefname{Satz}{Theorem}{Theorems}
\crefname{Prop}{Proposition}{Propositions}
\crefname{Lem}{Lemma}{Lemmata}
\crefname{Kor}{Corollary}{Corollaries}
\crefname{Def}{Definition}{Definitions}

\let\div\relax

\newcommand{\N}{\mathbbm{N}} 
\newcommand{\Z}{\mathbbm{Z}} 
 
\newcommand{\R}{\mathbbm{R}}

\newcommand{\del}{\partial}
\newcommand{\lto}{\longrightarrow}

\newcommand{\blank}{\,\cdot\,}
\newcommand{\dvol}{\mathrm{dvol}}
\newcommand{\vol}{\mathrm{vol}}

\newcommand{\set}[2]{\left\{#1\,\middle| \,#2 \right\}}
\newcommand{\upd}{\mathrm{d}}
\newcommand{\Lie}{\mathcal{L}}
\newcommand{\dotcup}{\mathbin{\dot\cup}}

\renewcommand{\emptyset}{\varnothing}
\renewcommand{\epsilon}{\varepsilon}
\let\strphi\phi
\renewcommand{\phi}{\varphi}
\renewcommand{\hat}{\widehat}

\DeclareMathOperator{\grad}{grad}
\DeclareMathOperator{\div}{div}

\DeclareMathOperator{\im}{im}

\DeclareMathOperator{\tr}{tr}

\DeclareMathOperator{\supp}{supp}

\DeclareMathOperator{\scal}{scal}
\DeclareMathOperator{\ric}{ric}

\DeclareMathOperator{\Ein}{Ein}
\DeclareMathOperator{\Graph}{Graph}
\DeclareMathOperator{\Int}{int}
\DeclareMathOperator{\Clos}{cl}

\newcommand{\cf}{cf.~}
\newcommand{\ie}{i.\,e.~}
\newcommand{\eg}{e.\,g.~}
\hyphenation{Lor-entz-ian}

\begin{document}
\author{Jonathan Glöckle\thanks{Fakultät für Mathematik, Universität Regensburg, 93040 Regensburg, Germany, E-mail address: \href{mailto:jonathan.gloeckle.math@outlook.de}{jonathan.gloeckle.math@outlook.de}}}
\title{Initial data rigidity implies spacetime rigidity}
\date{\today}
\maketitle

\begin{abstract}
	In this article, we revisit the initial data rigidity theorem of Eichmair, Galloway and Mendes \cite{Eichmair.Galloway.Mendes:2021}.
	The goal is to strengthen their result by showing that the initial data sets concerned carry a vector field that is lightlike and parallel in an ambient sense.
	This will be used in a second step to show that among the spacetimes satisfying the dominant energy condition there exists locally essentially one spacetime extending these initial data sets.
	This local uniqueness theorem also applies in the context of other initial data rigidity theorems.
	Notably, the one in the spin case due the author \cite{Gloeckle:2023p} and a recent study of the mass zero case in the positive energy theorem due to Hirsch and Zhang \cite{Hirsch.Zhang:2024p}.
\end{abstract}

\section{Introduction}
The solution of the Cauchy problem for the vacuum Einstein equations due to Yvonne Choquet-Bruhat \cite{Foures-Bruhat:1952} was undoubtedly a major milestone in the mathematical investigation of general relativity.
It states the following.
Let $M$ be a smooth $n$-manifold, $n \geq 2$, equipped with a (smooth) \emph{initial data set} $(g,k)$, \ie a pair of a Riemannian metric $g$ and a symmetric $2$-tensor $k$ on $M$.
Assume that $(g,k)$ satisfies the \emph{vacuum constraints} $\rho = 0$ and $j = 0$ for
\begin{equation} \label{eq:CE}
	\begin{aligned}
		\rho &= \frac{1}{2}(\scal^g + \tr^g(k)^2 - |k|_g^2) \\
		j &= \div^g(k) - \upd \tr^g(k).
	\end{aligned}
\end{equation}
Then there is a globally hyperbolic Lorentzian manifold $(\overline{M}, \overline{g})$ subject to the vacuum Einstein equation $\Ein^{\overline{g}} = 0$ and an embedding $M \hookrightarrow \overline{M}$ as spacelike hypersurface such that $g$ is the induced metric and $k$ is the induced second fundamental form of $M$ in $(\overline{M},\overline{g})$.
Moreover, this Lorentzian manifold is \emph{locally geometrically unique}, meaning that for any two such Lorentzian manifolds $(\overline{M}_1, \overline{g}_1)$ and $(\overline{M}_2, \overline{g}_2)$ there are open neighborhoods $W_1 \subseteq \overline{M}_1$ and $W_2 \subseteq \overline{M}_2$ of the image of $M$ such that there is an time-orientation preserving isometric diffeomorphism $W_1 \to W_2$ fixing $M$.

When matter is involved, it gets considerably more difficult.
While there are similar local existence and uniqueness results for certain specific matter models, \eg Einstein-Maxwell theory \cite[Thm.~10.3,Cor.~10.4]{Choquet-Bruhat:2009} or Einstein-Dirac-Maxwell theory \cite[Lem.~3.20, 3.21]{Mueller.Nowaczyk:2017}, such a treatment is well out of reach for the plethora of matter filling our universe (for instance, a certain limit would otherwise give rise to solutions of the Navier-Stokes equation).
Cosmological considerations like the celebrated incompleteness theorems by Hawking and Penrose (\cf \cite[Sec.~8.2]{Hawking.Ellis:1973} or \cite[Thms.~14.55,14.61]{ONeill:1983}) therefore rely on certain inequalities -- known as energy conditions -- rather than the precise form of the Einstein field equations with matter.
The most prominent of those is the \emph{dominant energy condition} (=DEC).
It states that $\Ein^{\overline{g}}(X,Y) \geq 0$ for each pair of causal vectors $X, Y$ lying in the same half of the light cone and reflects the physical intuition that matter has non-negative energy and should not propagate faster than light.

In the present article we want to study the “Cauchy problem” for the dominant energy condition.
Noting that the induced initial data set $(g,k)$ on any spacelike hypersurface within a time-oriented DEC Lorentzian manifold satisfies $\rho \geq |j|_g$ (defined by \eqref{eq:CE}), the \emph{dominant energy condition for initial data sets}, we ask the following.

\begin{Frag}[Existence question]
	Given a DEC initial data set $(g,k)$ on a manifold $M$, is there a time-oriented Lorentzian manifold $(\overline{M}, \overline{g})$ subject to DEC and an embedding $M \hookrightarrow \overline{M}$ as spacelike hypersurface such that $(g,k)$ is the induced initial data set on $M$?
\end{Frag}

\begin{Frag}[Uniqueness question] \label{Que:Uniq}
	Are there situations, where such an $(\overline{M}, \overline{g})$ is locally geometrically unique? 
\end{Frag}

For simplicity, we define a \emph{spacetime} to be a time-oriented Lorentzian manifold.
We call a spacetime $(\overline{M}, \overline{g})$ an \emph{extension} of an initial data set $(M,g,k)$ if $M$ embeds into $(\overline{M}, \overline{g})$ as spacelike hypersurface with induced initial data set $(g,k)$ on $M$.
Throughout this article, all structures are considered to be smooth and unless explicitly stated the manifolds do not have boundary. 

In many cases, the answer to the existence question is yes.
This applies for instance to the vacuum case $\rho = 0, j = 0$ as a consequence of the solution of the Cauchy problem for the vacuum Einstein equations.
Similarly, if the initial data set $(g,k)$ can be complemented by initial data for an electromagnetic (and a Dirac) field such that the relevant constraints are satisfied, then the solution of the Cauchy problem for Einstein-(Dirac-)Maxwell theory gives a DEC spacetime with the required properties.
The same argument of course also applies to other matter models with solved Cauchy problem.
Furthermore, a simple Taylor expansion ansatz to second order provides a DEC spacetime extension whenever the strict DEC $\rho > |j|_g$ holds, \cf \cite[Prop.~1.2.13]{Gloeckle:2024_c} or \cite[Prop.~1.10]{Gloeckle:2019}.
Somewhat surprisingly, however, a (smooth) DEC spacetime extension does not exist in general.
An explicit example of a DEC initial data set that does not embed into a smooth DEC spacetime was constructed in~\cite{Gloeckle:2024_b}.

The example just mentioned is tightly linked to the uniqueness question.
One of the central observations in the article is \cite[Lem.~3]{Gloeckle:2024_b}, which states that if $\rho = |j|_g$ and DEC holds, then not only the $2n+1$ components of the Einstein curvature tensor that are directly given by $\rho$ and (twice) $j$ but all of its $(n+1)^2$ components are uniquely determined.
This at least indicates that some form of geometric uniqueness could hold in the case $\rho = |j|_g$, which is in stark contrast to the situation for $\rho > |j|_g$, where, for example, terms of order higher than two can be added at will.
A first uniqueness result can be derived in the vacuum case with the help of the so-called conservation theorem.
This seemingly little-known result in the book of Hawking and Ellis \cite[Sec.~4.3]{Hawking.Ellis:1973} states that if the DEC holds and the Einstein curvature tensor vanishes on some sufficiently regular subset (\eg a spacelike hypersurface) then it vanishes on the whole domain of dependence of that subset.
Consequently, if $\rho = |j|_g = 0$ for $(g,k)$ on $M$, then the domain of dependence of $M$ in $(\overline{M}, \overline{g})$ satisfies the vacuum Einstein equations and is thus locally geometrically uniquely determined by the solution of the Cauchy problem for the vacuum Einstein equations.

The main purpose of this article is to give a further answer to \cref{Que:Uniq}.
We will show a similar uniqueness result in the context of \emph{pp-wave spacetimes}, \ie Lorentzian manifolds $(\overline{M}, \overline{g})$ admitting a parallel lightlike vector field $V$.
Those are always time-orientable and we can arrange that $V$ is future-pointing.
We can restrict this vector~field to a spacelike hypersurface $M$ and, abusing notation, denote the result again by $V$.
It is a section of the vector bundle $T\overline{M}_{|M} \to M$.
This bundle can be canonically identified with $\overline{T}M \coloneqq \underline{\R} \oplus TM \to M$ via $(x,X) \mapsto xe_0 + X$, where $e_0$ denotes the future unit normal on $M$.
We will thus from now on write elements of $\overline{T}M$ as $xe_0 + X$ with $x \in \R$ and $X \in TM$.
This identification respects the fiberwise Lorentzian metric and the time-orientation if we define them on $\overline{T}M$ by $\overline{g}(xe_0 + X, x^\prime e_0 + X^\prime) = -xx^\prime + g(X, X^\prime)$ and future-pointedness of $e_0$, respectively.
There is a connection $\overline{\nabla}$ defined on $\overline{T}M \to M$ by
\begin{gather} \label{eq:OlConn}
	\overline{\nabla}_Y (xe_0 + X) = \Big(\del_Y x + k(Y,X) \Big) e_0 + \Big(xk(Y,\blank)^\sharp + \nabla_Y X\Big),
\end{gather}
with $Y \in TM$ and $xe_0 + X \in \Gamma(\overline{T}M)$.
The definition is made in such a way that under the identification $\overline{T}M \cong T\overline{M}_{|M}$ it agrees with the restriction of the Levi-Civita connection of $(\overline{M}, \overline{g})$ to $T\overline{M}_{|M} \to M$.
Hence it makes sense to talk of $V \in \Gamma(\overline{T}M)$ to be a future-lightlike $\overline{\nabla}$-parallel vector field if we just know the initial data set $(g,k)$ on $M$ and not the spacetime into which it embeds.

\begin{SATZ} \label{Thm:RigidSpacetime}
	Let $(g,k)$ be an initial data set on a manifold $M$ that carries a future-lightlike $\overline{\nabla}$-parallel vector field $V = ue_0 - U \in \Gamma(\overline{T}M)$, $U \in \Gamma(TM)$.
	Suppose that the triple $(g,k,V)$ is \emph{locally rigid} in the following sense:
	\begin{quote}
		\emph{For every $p \in M$ and any open neighborhood $\tilde{W}$ of $p$ there is an open neighborhood $W \subseteq \tilde{W}$ of $p$ such that every DEC initial data set $(g^\prime, k^\prime)$ coinciding with $(g,k)$ on $M \setminus W$ carries a future-lightlike $\overline{\nabla}$-parallel vector field $V^\prime$ coinciding with $V$ on $M \setminus W$.}
	\end{quote}
	Assume now that $(\overline{M}, \overline{g})$ is a DEC spacetime extension of $(M,g,k)$.
	Then there is an open neighborhood of $M$ in $\overline{M}$ admitting an isometric embedding of codimension $0$ into the \emph{Killing development}
	\begin{gather} \label{eq:KD}
		(\R \times M, -U^\flat \otimes \upd v - \upd v \otimes U^\flat + g),
	\end{gather}
	mapping $M$ to $\{0\} \times M$ in the canonical way and the future of $M$ into $\R_{>0} \times M$.
\end{SATZ}

The notion of Killing development goes back to Beig and Chruściel \cite[Sec.~II]{Beig.Chrusciel:1996}.
The construction produces a Lorentzian manifold with a Killing vector field from an initial data set equipped with a vector field $V \in \Gamma(\overline{T}M)$ that is Killing in a certain ambient sense.
In particular, it can be applied to a lightlike $\overline{\nabla}$-parallel vector field $V$ and yields the (up to isometry) unique Lorentzian manifold with complete parallel vector field $\frac{\del}{\del v}$ that contains $M$ as spacelike hypersurface intersected by all flow lines of $\frac{\del}{\del v}$ with induced initial data set $(g,k)$ and such that ${\frac{\del}{\del v}}_{|M} = V$.
\Cref{Thm:RigidSpacetime} gives a sufficient criterion under which the Killing development is also locally geometrically unique as DEC spacetime extension of $(M,g,k)$.

One might wonder whether the local rigidity assumption is satisfied in any example.
It is; in fact, \cref{Thm:RigidSpacetime} was developed in the author's PhD thesis (\cf \cite[Prop.~1.5.3]{Gloeckle:2024_c}) in order to obtain an application for the following rigidity theorem.
It should be noted that examples of initial data sets with the required properties can be constructed using a method developed by Ammann, Kröncke and Müller \cite{Ammann.Kroencke.Mueller:2021}, which is intended to be refined in upcoming work by Ammann, Kröncke and the author of this article \cite{Ammann.Gloeckle.Kroencke:2025pp}.

\begin{Satz}[{\cite[Main Theorem and Addendum]{Gloeckle:2023p}}] \label{Thm:RigidSpin}
	Let $M$ be a compact connected spin manifold with boundary $\del M = \del_+ M \dotcup \del_- M$ endowed with an initial data set $(g,k)$.
	Denote by $\nu$ the unit normal on $\del M$ that is inward-pointing along $\del_+ M$ and outward-pointing along $\del_- M$.
	Assume that
	\begin{itemize}
		\item $(g,k)$ satisfies the dominant energy condition $\rho \geq |j|_g$.
		\item The future null expansion scalar (with respect to $\nu$) $\theta^+ \coloneqq \tr^{\del M}(\nabla \nu) + \tr^{\del M}(k)$ satisfies $\theta^+ \leq 0$ on $\del_+ M$ and $\theta^+ \geq 0$ on $\del_- M$.
		\item The $\hat{A}$-genus of $\del_+ M$ is non-zero: $\hat{A}(\del_+ M) \neq 0$.
	\end{itemize}
	Then there is a diffeomorphism $\Phi \colon [0,\ell] \times \del_- M \to M$ defining a foliation $F_s = \Phi(\{s\} \times \del_- M)$ with $F_0 = \del_+ M$ and $F_\ell = \del_- M$.
	The leaves carry an induced metric, an induced spin structure and a unit normal $\nu$ pointing in the direction of growing $s$-parameter.
	The diffeomorphism can be chosen in such a way that the following holds for every leaf $F_s$:
	\begin{itemize}
		\item Its future null second fundamental form (with respect to $\nu$) $\chi^+ \coloneqq (\nabla \nu)^\flat_{|TF_s \otimes TF_s} + k_{|TF_s \otimes TF_s}$ vanishes, in particular it is a MOTS, \ie $\theta^+ = \tr^{F_s}(\chi^+)=0$.
		\item It carries a non-trivial parallel spinor, in particular its metric is Ricci-flat.
		\item Its tangent vectors are orthogonal to $j^\sharp$ and $\rho + j(\nu) = 0$, in particular the dominant energy condition holds marginally: $\rho = |j|_g$.
	\end{itemize}
	Moreover, the initial data set $(g,k)$ on $M$ carries a lightlike $\overline{\nabla}$-parallel spinor $\phi$.
	The foliation $(F_s)_{s \in [0,\ell]}$ may be constructed in such a way that the Riemannian Dirac current $U_\phi$ of $\phi$ is a positive multiple of $-\nu$ at each point.
\end{Satz}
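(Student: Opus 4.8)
\emph{Proof sketch.} I would prove this by a Witten-type spinorial argument on the compact band $M$ followed by an analysis of the equality case. Fix the $\Cl_{n,1}$-linear spinor bundle $S \to M$ of the spin structure together with the connection on $S$ — still written $\overline{\nabla}$ — that lifts \eqref{eq:OlConn}, \ie the restriction to $M$ of the spin connection of an ambient spacetime, and let $\overline{D}$ be the corresponding hypersurface (Dirac--Witten) operator. The analytic heart is the Schrödinger--Lichnerowicz--Weitzenböck identity $\overline{D}^2 = \overline{\nabla}^*\overline{\nabla} + \mathcal{R}$ whose symmetric curvature endomorphism satisfies $\langle\mathcal{R}\phi,\phi\rangle \geq c\,(\rho - |j|_g)\,|\phi|^2$ for some fixed $c > 0$, and is therefore non-negative by the dominant energy condition, with equality at a point forcing $\rho = |j|_g$ there. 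On $\partial M$ I would impose the local elliptic boundary condition given by a chirality operator built from $e_0$ and $\nu$ (of MIT-bag / Hijazi--Montiel--Zhang type), choosing the sign of the chirality on $\partial_+M$ and on $\partial_-M$ so that the boundary integrand in Green's formula for $\int_M|\overline{D}\phi|^2$ is, on each component, a positive multiple of $\mp\theta^+|\phi|^2$, hence $\geq 0$ precisely because $\theta^+ \leq 0$ on $\partial_+M$ and $\theta^+ \geq 0$ on $\partial_-M$.

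Existence of a nontrivial solution is where the $\hat{A}$-genus enters. The operator $\overline{D}$ with this boundary condition is Fredholm, and I would compute its index to be $\pm\hat{A}(\partial_+M)$, either by deforming the data to a product collar and running an Atiyah--Patodi--Singer / Callias-type computation, or by a relative-index argument — morally the ``incoming'' end $\partial_+M$ contributes its $\hat{A}$-genus while $\partial_-M$ contributes nothing, as in the Gromov--Lawson / Cecchini--Zeidler band picture, here in the degenerate regime in which the dominant energy condition provides only $\rho - |j|_g \geq 0$ and hence no width estimate. Since $\hat{A}(\partial_+M) \neq 0$ there is a $\phi \not\equiv 0$ with $\overline{D}\phi = 0$ obeying the boundary condition. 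Inserting $\phi$ into Green's formula and using that the summands $\int_M|\overline{\nabla}\phi|^2$, $\int_M\langle\mathcal{R}\phi,\phi\rangle$ and the boundary integral are each $\geq 0$ yet add to zero, I get $\overline{\nabla}\phi \equiv 0$ (so $\phi$, being parallel, is nowhere zero), $\mathcal{R}\phi \equiv 0$, and $\theta^+ \equiv 0$ on $\partial M$; in particular $\rho = |j|_g$ everywhere and $\phi$ is a $\overline{\nabla}$-parallel spinor. That its Dirac current $V_\phi = u_\phi e_0 - U_\phi \in \Gamma(\overline{T}M)$ is lightlike, and that the Einstein tensor additionally satisfies $\rho + j(\nu) = 0$ and $j^\sharp \perp U_\phi^\perp$, I would then read off from $\mathcal{R}\phi \equiv 0$ via the pointwise algebra underlying \cite[Lem.~3]{Gloeckle:2024_b}.

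It remains to build the foliation and its leafwise geometry. Since $V_\phi$ is bilinear in $\phi$ it is $\overline{\nabla}$-parallel, and matching the two components of $\overline{\nabla}_Y V_\phi = 0$ against \eqref{eq:OlConn} gives the structure equations $\nabla_Y U_\phi = u_\phi\,k(Y,\blank)^\sharp$ and $\upd u_\phi = k(\blank, U_\phi)$. Symmetry of $k$ then forces $\upd U_\phi^\flat = 0$, so locally $U_\phi^\flat = \upd f$; as $\nabla f = U_\phi$ is nowhere zero ($|U_\phi| = u_\phi = |\phi|^2 > 0$ by lightlikeness) and normal to $\partial M$ — here I would have fixed the boundary condition so that $U_\phi$ is a positive multiple of $-\nu$ along $\partial M$ — the function $f$ is a submersion onto an interval, is constant on each boundary component, and its level sets foliate $M$; the normalised gradient flow yields $\Phi\colon[0,\ell]\times\partial_-M \to M$ with $F_0 = \partial_+M$, $F_\ell = \partial_-M$, and with $U_\phi$ a positive multiple of $-\nu$ on every leaf. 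Using lightlikeness in the structure equations, a short computation gives $(\nabla_X\nu)^\flat(Y) = -k(X,Y)$ for $X, Y$ tangent to a leaf, whence $\chi^+ \equiv 0$ and $F_s$ is a MOTS; restricting $\phi$ and combining $\overline{\nabla}\phi = 0$, $\chi^+ = 0$ and the algebraic degeneracy of $\phi$ through the hypersurface-in-hypersurface Gauss formula then shows $\phi|_{F_s}$ is Levi-Civita parallel on $F_s$, so $F_s$ is Ricci-flat; and the Einstein-tensor relations restrict along $TF_s$ to $j^\sharp \perp TF_s$ and $\rho + j(\nu) = 0$.

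I expect two steps to carry the real weight. The first is the simultaneous calibration of the boundary condition: it must be elliptic, make the Green's boundary integrand $\geq 0$ under the $\theta^+$ sign hypotheses on \emph{both} boundary parts, and leave the index equal to $\hat{A}(\partial_+M)$ — reconciling these, and carrying out the index computation honestly on the collar, is the technical core. The second is upgrading the single identities $\overline{\nabla}\phi = 0$ and $\mathcal{R}\phi = 0$ to the \emph{global} product-type structure: that the level sets of $f$ are diffeomorphic to $\partial_-M$, that the flow meets $\partial_+M$ at a uniform parameter value, and that the leafwise conclusions ($\chi^+ = 0$, parallel spinor, marginal DEC) hold on \emph{every} leaf — which means turning the pointwise consequences of parallelism into genuine global statements using compactness and connectedness of the band.
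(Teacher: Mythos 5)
Your proposal takes essentially the same route as the paper: the theorem is quoted from \cite{Gloeckle:2023p} (``Initial data rigidity via Dirac--Witten operators''), whose strategy is exactly the Dirac--Witten/Weitzenb\"ock argument with calibrated boundary conditions and index governed by $\hat{A}(\del_+ M)$ that you outline, and the only portion the present paper sketches itself --- extracting the foliation, $\chi^+ = 0$ and the leafwise conclusions from the lightlike $\overline{\nabla}$-parallel spinor via its Dirac current $V_\phi$, the structure equations \eqref{eq:OlPar} and the closedness of $U_\phi^\flat$ --- coincides with your final construction. So the approach is correct and matches the paper's.
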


\begin{Bem}
	The condition on $\theta^+$ says that $\del_+ M$ is weakly outer trapped and $\del_- M$ is weakly outer untrapped, where the outgoing direction points from $\del_+M$ to $\del_- M$.
	From a Riemannian perspective, it is a lower bound on the mean curvature $H = \frac{1}{n-1} \tr^{\del M}(-\nabla \tilde{\nu})$ of the boundary with respect to the inward-pointing unit normal $\tilde{\nu}$.
	Namely, $H \geq \pm (n-1)\tr^{\del M}(k)$ on $\del_{\pm} M$.
	Since, moreover, we assume a lower bound on the scalar curvature (implied by $\rho \geq |j|_g$) and an index theoretic obstruction to positive scalar curvature on $\del_+ M$ (given by $\hat{A}(\del_+ M) \neq 0$), \cref{Thm:RigidSpin} is very much in spirit of the warped product rigidity results due to Cecchini and Zeidler \cite[Sec.~8-10]{Cecchini.Zeidler:2024} and Daniel Räde \cite{Raede:2023}.
	The relation to these results is discussed more thoroughly in the introduction of \cite{Gloeckle:2023p}.
\end{Bem}

Actually, most of the conclusions in~\cref{Thm:RigidSpin} follow from the final addendum, which is proved first.
Without going into the details of spin geometry, we sketch how the foliation is constructed from the existence of the lightlike $\overline{\nabla}$-parallel spinor $\phi$ (with the right boundary conditions).
The full proof is laid out in \cite{Gloeckle:2023p}.
The main idea is considering the (Lorentzian) Dirac current $V_\phi = u_\phi e_0 - U_\phi \in \Gamma(\overline{T}M)$ associated to the spinor $\phi$ (\cf \eg \cite[Def.~20]{Gloeckle:2023p}).
The precise definition is not important here; it is merely the following fact that matters: $V_\phi$ is a future-lightlike (in particular nowhere vanishing) $\overline{\nabla}$-parallel vector field.
This amounts to $u_\phi = |U_\phi| > 0$ and
\begin{equation} \label{eq:OlPar}
	\begin{aligned}
		\nabla_X U_\phi &= u_\phi k(X, \blank)^\sharp \\
		\upd u_\phi(X) &= k(U_\phi, X),
	\end{aligned}
\end{equation}
for all $X \in TM$.
Thus the tangential part $U_\phi$, which goes by the name \emph{Riemannian Dirac current}, is nowhere vanishing and the first equation of~\eqref{eq:OlPar} shows that $\upd U_\phi^\flat = 0$.
So in particular, $U_\phi^\flat$ defines an involutive distribution.
The foliation $(F_s)_{s \in [0,\ell]}$ is now obtained from this distribution by virtue of the Frobenius theorem and $\overline{\nabla} V_\phi = 0$ immediately implies $\chi^+ = 0$ for the leaves.

It is important to note that the existence of $V_\phi$ is independent of how the initial data set $(g,k)$ looks like in the interior $M_0 \coloneqq M \setminus \del M$ -- as long as the DEC is satisfied.
Basically, this observation implies the local rigidity of $(g_{|M_0},k_{|M_0},{V_{\phi}}_{|M_0})$ and gives rise to the following corollary.

\begin{Kor} \label{Cor:LocGeomUniqSpin}
	Let $(g,k)$ be an initial data set on a manifold with boundary $M$ and suppose that the assumptions of \cref{Thm:RigidSpin} are satisfied.
	Then there is a DEC spacetime extension of the initial data set $(g_{|M_0}, k_{|M_0})$ on $M_0 \coloneqq M \setminus \del M$ and this extension is locally geometrically unique:
	For any two time-oriented DEC Lorentzian manifolds $(\overline{M}_1, \overline{g}_1)$ and $(\overline{M}_2, \overline{g}_2)$ containing $(M_0,g_{|M_0},k_{|M_0})$ as spacelike hypersurface there are open neighborhoods $W_1 \subseteq \overline{M}_1$ and $W_2 \subseteq \overline{M}_2$ of $M_0$ and a time-orientation preserving isometry $(W_1, \overline{g}_1) \cong (W_2, \overline{g}_2)$ fixing $M_0$.  
\end{Kor}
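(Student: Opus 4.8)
The plan is to feed the conclusions of \cref{Thm:RigidSpin} into \cref{Thm:RigidSpacetime}. By \cref{Thm:RigidSpin}, the initial data set $(g,k)$ on $M$ carries a lightlike $\overline{\nabla}$-parallel spinor $\phi$, and we may fix a foliation for which the Riemannian Dirac current $U_\phi$ is a positive multiple of $-\nu$ at \emph{every} point of $M$; as recalled in the text, the Lorentzian Dirac current $V_\phi = u_\phi e_0 - U_\phi$ is then future-lightlike, $\overline{\nabla}$-parallel and satisfies \eqref{eq:OlPar}. Restricting to $M_0 = M \setminus \del M$ produces the triple $(g_{|M_0}, k_{|M_0}, V_\phi|_{M_0})$, whose Killing development is \eqref{eq:KD} with $U = U_\phi|_{M_0}$. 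For the existence half I would take this Killing development itself: by construction it is a spacetime containing $M_0 = \{0\} \times M_0$ as a spacelike hypersurface with induced initial data $(g_{|M_0}, k_{|M_0})$; being a pp-wave, its Einstein tensor is a scalar multiple of $U_\phi^\flat \otimes U_\phi^\flat$ whose coefficient is pinned down along $M_0$ by the Hamiltonian constraint as $\rho/u_\phi^2$, and this is $\geq 0$ because $\rho = |j|_g \geq 0$ by \cref{Thm:RigidSpin} together with the DEC (the leaves foliate $M$, so the marginal DEC holds everywhere). Since $U_\phi^\flat$ is, up to pullback, the $\overline{g}$-dual of a future-lightlike vector field, $\overline{\Ein}(X,Y) \geq 0$ for all future-causal $X,Y$, so the DEC holds; this is in the spirit of \cite[Lem.~3]{Gloeckle:2024_b}.

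The substantive step is verifying that $(g_{|M_0}, k_{|M_0}, V_\phi|_{M_0})$ is locally rigid. Given $p \in M_0$ and a neighbourhood $\tilde{W}$, I would choose $W$ to be a small open ball around $p$ with $\Clos(W) \subseteq \tilde{W} \cap M_0$ compact and $W$ small enough that $M_0 \setminus W$ is connected (possible since $M$ is connected and $\dim M \geq 2$). Let $(g', k')$ be a DEC initial data set on $M_0$ equal to $(g_{|M_0}, k_{|M_0})$ off $W$. As $\Clos(W)$ is bounded away from $\del M$, the pair $(g',k')$ coincides with $(g,k)$ near $\del M$ and hence extends to a smooth DEC initial data set on $M$, still meeting the $\theta^+$-sign conditions on $\del_\pm M$ (they only see a boundary collar) and $\hat{A}(\del_+ M) \neq 0$ (topological). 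By \cref{Thm:RigidSpin}, this extension carries a future-lightlike $\overline{\nabla}'$-parallel vector field $V_{\phi'} = u_{\phi'} e_0 - U_{\phi'}$ on $M$ with $U_{\phi'}$ a positive multiple of $-\nu$ at every point of $M$. At each point of $\del_- M$ both $V_{\phi'}$ and $V_\phi$ are then positive multiples of $e_0 + \nu$ (tangential part a positive multiple of $-\nu$, normal component forced by lightlikeness), so $\overline{g}(V_{\phi'}, V_\phi)$ vanishes there. On the connected set $M_0 \setminus W$ the two initial data sets, their fibre metrics on $\overline{T}M$ and their connections $\overline{\nabla}, \overline{\nabla}'$ all coincide, so $\overline{g}(V_{\phi'}, V_\phi)$ is constant there; being continuous up to $\del M$ it equals its boundary value $0$, hence is identically $0$. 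Two future-lightlike vectors with vanishing $\overline{g}$-product are proportional, so $V_{\phi'} = \lambda V_\phi$ pointwise on $M_0 \setminus W$ with $\lambda > 0$, and $\overline{\nabla}$-parallelism of both forces $\upd\lambda \otimes V_\phi = 0$, \ie $\lambda$ constant. Then $\lambda^{-1} V_{\phi'}$ is a future-lightlike $\overline{\nabla}'$-parallel vector field coinciding with $V_\phi|_{M_0}$ on $M_0 \setminus W$, which is what local rigidity demands. The hard part is exactly this — that the field handed back by \cref{Thm:RigidSpin} for the perturbed data agrees, after rescaling, verbatim with $V_\phi$ away from $W$ — and it rests on the robustness of the Dirac-current construction against interior perturbations of DEC data (the "important note" preceding the corollary), on the $-\nu$-normalisation along the boundary leaf, and on the continuity-plus-parallelism argument just used.

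With local rigidity established, \cref{Thm:RigidSpacetime} applies to $(g_{|M_0}, k_{|M_0}, V_\phi|_{M_0})$. Given two DEC spacetime extensions $(\overline{M}_1, \overline{g}_1)$ and $(\overline{M}_2, \overline{g}_2)$ of $(M_0, g_{|M_0}, k_{|M_0})$, the theorem supplies for $i = 1,2$ an open neighbourhood $W_i'$ of $M_0$ in $\overline{M}_i$ and a codimension-$0$ isometric embedding $\iota_i \colon W_i' \to \R \times M_0$ into the Killing development taking $M_0$ to $\{0\} \times M_0$ canonically and the future of $M_0$ into $\R_{>0} \times M_0$, hence time-orientation preserving. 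Setting $N \coloneqq \iota_1(W_1') \cap \iota_2(W_2')$, an open neighbourhood of $\{0\} \times M_0$, and $W_i \coloneqq \iota_i^{-1}(N)$, the composite $\iota_2^{-1} \circ \iota_1 \colon W_1 \to W_2$ is a time-orientation preserving isometry, and it fixes $M_0$ since both $\iota_i$ restrict there to the canonical inclusion. This is the asserted local geometric uniqueness.
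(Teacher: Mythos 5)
Your overall strategy is the same as the paper's: feed the lightlike $\overline{\nabla}$-parallel Dirac current from \cref{Thm:RigidSpin} into \cref{Thm:RigidSpacetime}, verifying local rigidity by noting that interior perturbations of the data leave the hypotheses of \cref{Thm:RigidSpin} intact. Your route to matching $V_{\phi'}$ with $V_\phi$ on the complement is slightly different from the paper's — you compare $\overline{g}(V_\phi,V_{\phi'})$, which vanishes on $\del_-M$ and is constant by joint parallelism, and then use the elementary fact that two future-lightlike vectors with vanishing inner product are proportional with constant ratio. The paper instead normalises both vector fields to be $e_0+\nu$ at a single fixed point $q\in\del_-M$ and then invokes uniqueness of parallel transport along paths in the connected set $M\setminus W$. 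Both are valid; the paper's normalisation is a bit more economical, yours is more self-contained in that it avoids specifying a scaling convention up front.

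However, your existence argument contains a genuine gap. You assert that \emph{``being a pp-wave, its Einstein tensor is a scalar multiple of $U_\phi^\flat\otimes U_\phi^\flat$''} and then pin down the coefficient via the Hamiltonian constraint. This is not true for general pp-waves: a parallel null vector field forces $\overline{R}(\blank,\blank,\blank,V)=0$, but the Einstein tensor can still have components coming from the intrinsic Ricci curvature of the leaves of $V^\perp$. Indeed, the paper's \cref{Prop:KDSatDEC} requires the hypothesis that the leaves carry Ricci-flat induced metrics, and the remark after it explicitly points out that if the leaves are merely scalar-flat, the Killing development need \emph{not} satisfy DEC. Citing \cite[Lem.~3]{Gloeckle:2024_b} at this point is also circular: that lemma constrains the Einstein tensor of a spacetime \emph{already assumed} to satisfy DEC, whereas you are trying to \emph{establish} DEC for the Killing development. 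The gap is fixable — \cref{Thm:RigidSpin} does deliver Ricci-flat leaves, so \cref{Prop:KDSatDEC} applies directly, which is exactly what the paper does — but as written your justification of the DEC for the Killing development is not sound and should be replaced by an appeal to \cref{Prop:KDSatDEC} together with the Ricci-flatness of the leaves.
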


\Cref{Thm:RigidSpin} has a predecessor due to Eichmair, Galloway and Mendes that was proved with the help of minimal surface (or rather:~MOTS) techniques instead of spinorial methods.

\begin{Satz}[{\cite[Thm.~1.2]{Eichmair.Galloway.Mendes:2021}}] \label{Thm:RigidMOTS}
	Let $M$ be a compact connected manifold of dimension $3 \leq n \leq 7$ with boundary $\del M = \del_+ M \dotcup \del_- M$ endowed with an initial data set $(g,k)$.
	Denote by $\nu$ the unit normal on $\del M$ that is inward-pointing along $\del_+ M$ and outward-pointing along $\del_- M$.
	Assume that
	\begin{itemize}
		\item $(g,k)$ satisfies the dominant energy condition $\rho \geq |j|_g$.
		\item The future null expansion scalar (with respect to $\nu$) satisfies $\theta^+ \leq 0$ on $\del_+ M$ and $\theta^+ \geq 0$ on $\del_- M$.
		\item $M$ satisfies the \emph{homotopy condition} with respect to $\del_+ M$:
		There is a continuous map $M \to \del_+ M$ so that the composition $\del_+ M \hookrightarrow M \to \del_+ M$ is homotopic to the identity.
		\item $\del_+ M$ satisfies the \emph{cohomology condition}:
		There are cohomology classes $h_1, \ldots, h_{n-1} \in H^1(\del_+ M, \Z)$ with $h_1 \cup \ldots \cup h_{n-1} \neq 0$.
	\end{itemize}
	Then there is a diffeomorphism $\Phi \colon  [0,\ell] \times \del_- M \to M$ defining a foliation $F_s = \Phi(\{s\} \times \del_- M)$ with $F_0 = \del_+ M$ and $F_\ell = \del_- M$.
	The leaves carry an induced metric and a unit normal $\nu$ pointing in the direction of growing $s$-parameter.
	The diffeomorphism can be chosen in such a way that the following holds for every leaf $F_s$:
	\begin{itemize}
		\item Its future null second fundamental form (with respect to $\nu$) vanishes, $\chi^+ = 0$, in particular it is a MOTS.
		\item Its metric is Ricci-flat. (It follows, by an argument of Bochner, that $F_s$ with its induced metric is a flat torus.)
		\item Its tangent vectors are orthogonal to $j^\sharp$ and $\rho + j(\nu) = 0$, in particular the dominant energy condition holds marginally: $\rho = |j|_g$.
	\end{itemize}
\end{Satz}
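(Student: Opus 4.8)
The natural route is the MOTS-theoretic strategy of Schoen--Yau, Galloway--Schoen and Eichmair. \emph{Step 1 (a stable MOTS).} First I would use the sign conditions on $\theta^+$: they make $\del_+ M$ a weakly outer trapped barrier and $\del_- M$ a weakly outer untrapped barrier, putting one in position to invoke the existence and regularity theory for marginally outer trapped hypersurfaces with obstacle boundary --- and it is precisely here that the hypothesis $n \leq 7$ enters, to guarantee smoothness of the MOTS produced. The homotopy condition ensures that the class of hypersurfaces separating $\del_+ M$ from $\del_- M$ and homologous to $\del_+ M$ is nonempty and nontrivial; running the MOTS construction in this class and passing to an outermost representative should yield a smooth, closed, two-sided, separating MOTS $\Sigma$ in $M$, homologous to $\del_+ M$, whose MOTS stability operator $L$ has nonnegative principal eigenvalue (i.e.\ $\Sigma$ is \emph{stable}).

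\emph{Step 2 (generalized positive scalar curvature on $\Sigma$).} Next I would pass, following Galloway--Schoen, to the symmetrized stability operator: although $L$ is not self-adjoint, its principal eigenvalue dominates that of a self-adjoint operator $-\Delta_\Sigma + \tfrac12\scal^\Sigma - Q$ with $Q \geq \tfrac12|\chi^+|^2 + \rho + j(\nu)$. Since DEC gives $\rho + j(\nu) \geq \rho - |j|_g \geq 0$, stability forces $\lambda_1\big(-\Delta_\Sigma + \tfrac12\scal^\Sigma\big) \geq 0$, so $\Sigma$ carries positive scalar curvature in the generalized (Yamabe) sense, witnessed by a positive $\phi$ with $-\Delta_\Sigma\phi + \tfrac12\scal^\Sigma\phi \geq 0$. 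Moreover, composing $\Sigma \hookrightarrow M$ with the map $M \to \del_+ M$ furnished by the homotopy condition and pulling back $h_1,\dots,h_{n-1}$ gives classes in $H^1(\Sigma;\Z)$ whose cup product pairs nontrivially with $[\Sigma]$ --- this uses that $\Sigma$ is homologous to $\del_+ M$ and that the composite is homotopic to the identity on $\del_+ M$ --- so the cohomological obstruction of the hypothesis descends to $\Sigma$.

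\emph{Step 3 (dimensional descent and the equality case).} Then I would run the Schoen--Yau inductive descent on $\Sigma$: minimize area in the Poincaré dual of $h_1$ to obtain a smooth stable minimal hypersurface (smoothness automatic below dimension $7$), propagate the generalized-PSC condition by the conformal trick --- multiply the stability inequality by the positive first eigenfunction and absorb the resulting gradient term into a conformal change of the induced metric --- and peel off one cohomology class while lowering the dimension by one at each step, until reaching a $2$-dimensional stable minimal hypersurface $S$ (possibly $S = \Sigma$). On $S$ one has $\lambda_1(-\Delta_S + K_S) \geq 0$, hence $\int_S K_S \geq 0$, i.e.\ $\chi(S) \geq 0$ by Gauss--Bonnet; but the two surviving degree-one classes with nonzero cup product force $\chi(S) \leq 0$, so $S$ is a flat torus and every inequality used along the descent is an equality. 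Tracing equality back up to $\Sigma$ is where the work lies: it forces $\scal^\Sigma \equiv 0$ and, through the second-variation equalities (each minimal slice totally geodesic, ambient Ricci curvature vanishing in normal directions), $\Ric^\Sigma \equiv 0$; equality in the Galloway--Schoen estimate forces $\chi^+ = 0$ and $\rho + j(\nu) = 0$ on $\Sigma$, and the latter together with DEC forces equality in Cauchy--Schwarz, i.e.\ $j^\sharp \perp T\Sigma$ and $\rho = |j|_g$. Finally, $b_1(\Sigma) \geq n-1 = \dim\Sigma$ together with Ricci-flatness identifies $\Sigma$, via Bochner and Bieberbach, as a flat torus.

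\emph{Step 4 (the foliation), and the main obstacle.} It remains to spread this one rigid MOTS into a foliation. The equality case makes the symmetrized stability operator reduce to $-\Delta_\Sigma$, so $L$ itself has one-dimensional kernel spanned by a positive function and principal eigenvalue zero; this infinitesimal rigidity feeds Galloway's local splitting argument, producing via the implicit function theorem a one-parameter family of nearby embedded MOTS sweeping out a neighborhood of $\Sigma$. Every leaf is again a stable MOTS, hence again rigid, so $\chi^+ = 0$, Ricci-flatness and the marginal DEC relations hold on each leaf. An open--closed argument then extends the foliation over all of $M$: openness is the local step, and closedness uses compactness for families of MOTS with uniformly controlled geometry (smoothness of limits again needs $n \leq 7$) together with the fact that the barriers $\del_\pm M$ prevent leaves from escaping --- which in the limit forces $\theta^+ \equiv 0$ along $\del_\pm M$ as well, so that $\del_+ M$ and $\del_- M$ appear as the extreme leaves $F_0$ and $F_\ell$; reparametrizing by arclength along a transverse curve yields the diffeomorphism $\Phi\colon[0,\ell]\times\del_- M \to M$. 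I expect Step 3 to be the principal obstacle --- arranging the descent so that both the generalized-PSC property and the cohomological obstruction survive each stage, and then squeezing the complete list of pointwise conclusions out of the equality case --- with Step 4 the secondary difficulty, since it couples the sharp infinitesimal rigidity with a genuinely global compactness input.
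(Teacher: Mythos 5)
This statement is not proved in the present paper at all: it is quoted verbatim as Theorem~1.2 of \cite{Eichmair.Galloway.Mendes:2021}, so the only meaningful comparison is with the proof in that reference. Measured against it, your sketch is in the right circle of ideas -- MOTS existence and regularity for $3 \leq n \leq 7$, the Galloway--Schoen symmetrized stability inequality, the cohomology condition as a Schoen--Yau type obstruction to positive scalar curvature transferred to interior hypersurfaces via the homotopy condition, rigidity in the borderline case, and a local-splitting-plus-open/closed argument for the foliation -- and in that sense it follows the same strategy as \cite{Eichmair.Galloway.Mendes:2021}, which builds on \cite{Galloway:2008} and \cite{Galloway.Schoen:2006}.

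Two points in your write-up are genuinely shaky, and both are places where the actual argument runs differently. First, Ricci-flatness of the leaves is not obtained by ``tracing equality back up'' through the Schoen--Yau descent and second-variation identities; the descent only yields the non-existence of a PSC metric on any hypersurface carrying the pulled-back classes $h_1 \cup \ldots \cup h_{n-1} \neq 0$. The standard route is: stability plus the DEC and this obstruction force $\lambda_1$ of the symmetrized operator to vanish, whence $\chi^+ \equiv 0$, $\rho + j(\nu) \equiv 0$ and $\scal \equiv 0$ on the leaf; then scalar-flatness combined with the absence of any PSC metric gives Ricci-flatness by the Kazdan--Warner/Bourguignon deformation trichotomy, and only afterwards does Bochner with $b_1 \geq n-1$ identify the leaf as a flat torus. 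Your equality-case claim that each minimal slice in the descent is totally geodesic with ambient Ricci vanishing in normal directions does not, as stated, give $\Ric^\Sigma = 0$ on the top-dimensional leaf. Second, your Step~4 defers to the very end the fact that $\del_+ M$ and $\del_- M$ are themselves leaves, i.e.\ that $\theta^+ \equiv 0$ on the boundary. In \cite{Eichmair.Galloway.Mendes:2021} this is established first, not last: if $\del_+ M$ were not a weakly outermost MOTS, the existence theory (with the trapping hypotheses as barriers) would produce a MOTS strictly inside $M$ admitting a PSC metric in the generalized sense, contradicting the cohomology condition; the local splitting theorem is then applied directly to $\del_+ M$ and propagated to $\del_- M$. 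Starting instead from an arbitrary separating stable MOTS $\Sigma$, as you do, leaves a gap between the interior foliation and the boundary that your limiting argument does not close.
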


It arises the question, whether also in the context of \cref{Thm:RigidMOTS}, there is a future-lightlike $\overline{\nabla}$-parallel vector field inducing the foliation so that \cref{Thm:RigidSpacetime} can be applied.
While $\chi^+ = 0$ is equivalent to $\overline{g}(\overline{\nabla}_X (e_0 + \nu), Y) = 0$ for $X,Y \in TF_s$, it is not clear why the other components of $\overline{\nabla} (e_0 + \nu)$ should vanish -- and in general they do not.
Instead, we use the ansatz $V = u(e_0 + \nu)$ for a positive function $u \in C^\infty(M)$ and remark that $\overline{g}(\overline{\nabla}_X V, Y) = 0$ also holds for $X,Y \in TF_s$.
It is the second main result of this article that in the context of \cref{Thm:RigidMOTS}, this function can be chosen appropriately.
This leads to a strengthening of the initial data rigidity result that is interesting in its own right.

\begin{SATZ} \label{Thm:ExParVF}
	Let $(g,k)$ be a DEC initial data set on a compact manifold with boundary $M$.
	Assume that $M \cong [0,\ell] \times F$ in such a way that all the canonical leaves are Ricci-flat MOTS.
	Then there exists a lightlike $\overline{\nabla}$-parallel vector field on $M$.
	More precisely, there is a positive function $u \in C^\infty(M)$ such that $V \coloneqq ue_0 + u\nu$ satisfies $\overline{\nabla} V = 0$.
\end{SATZ}

As a consequence, \cref{Thm:RigidSpacetime} can also be applied in the situation of \cref{Thm:RigidMOTS}, yielding local geometric uniqueness for the DEC spacetime extension.
It is worth noting that -- as verified in \cref{Prop:KDSatDEC} later -- in the setting of \cref{Thm:ExParVF}, the Killing development is subject to DEC yielding the asserted DEC spacetime extension both in Corollary \labelcref{Cor:LocGeomUniqSpin} and \labelcref{Cor:LocGeomUniqMOTS}. 

\begin{Kor} \label{Cor:LocGeomUniqMOTS}
	Let $(g,k)$ be an initial data set on a manifold with boundary $M$ and suppose that the assumptions of \cref{Thm:RigidMOTS} are satisfied.
	Then there is a DEC spacetime extension of the initial data set $(g_{|M_0}, k_{|M_0})$ on $M_0 \coloneqq M \setminus \del M$ and this extension is locally geometrically unique. 
\end{Kor}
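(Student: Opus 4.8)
The plan is to reduce the equation $\overline{\nabla}V = 0$ to an overdetermined first-order system for $u$ and to solve that system with the help of the MOTS rigidity analysis underlying \cref{Thm:RigidMOTS}. Set $W \coloneqq e_0 + \nu \in \Gamma(\overline{T}M)$; then $\overline{g}(W,W) = -1 + |\nu|_g^2 = 0$ and $W$ is future-pointing, so every positive multiple of $W$ is future-lightlike. Substituting $V = uW = ue_0 + u\nu$ into \eqref{eq:OlConn} and separating the $e_0$- and $TM$-parts, one finds that $\overline{\nabla}V = 0$ is equivalent to the pair
\begin{equation} \label{eq:plan-sys}
	\upd u = -u\,k(\nu,\blank), \qquad \nabla_Y\nu + k(Y,\blank)^\sharp - k(Y,\nu)\,\nu = 0 \quad (Y \in TM),
\end{equation}
the second identity arising after inserting the first and dividing by $u>0$; crucially, it no longer involves $u$. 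I would then argue in three steps: (i) verify the second identity of \eqref{eq:plan-sys}; (ii) observe that it forces $\overline{\nabla}_Y W = k(\nu,Y)\,W$, so that the first identity of \eqref{eq:plan-sys} amounts to the $1$-form $k(\nu,\blank)$ being exact; (iii) establish this exactness.

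For step (i): pairing the second identity of \eqref{eq:plan-sys} with $\nu$ yields $0 = 0$ for every $Y$ (the two $k(Y,\nu)$-contributions cancel and $g(\nabla_Y\nu,\nu) = 0$), so the identity is equivalent to the vanishing, for each $Y$, of its component tangent to the leaf $F_s$ through the point considered. For $Y = X \in TF_s$ that tangential component equals $\chi^+(X,\blank)^\sharp$ — indeed $\chi^+(X,Y) = \overline{g}(\overline{\nabla}_X W, Y)$ for $X,Y \in TF_s$ — and for $Y = \nu$ it equals $\nabla_\nu\nu + \psi_s^\sharp$, where $\psi_s \coloneqq k(\nu,\blank)|_{TF_s}$. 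Now each leaf is a MOTS contained in the foliation $(F_s)$, so the first variation of the function $\theta^+$ gives $L_{F_s}\beta_s = \del_s\theta^+ = 0$, where $L_{F_s}$ is the MOTS stability operator of $F_s$ and $\beta_s = |\grad s|_g^{-1} > 0$ is the lapse of the foliation (the normal speed of the variation field); hence $\beta_s$ is, up to a positive leaf-constant, the principal eigenfunction of $L_{F_s}$, and inserting $v_s = \log\beta_s$ into $L_{F_s}\beta_s = 0$, integrating over the closed leaf and completing the square yields
\begin{equation} \label{eq:plan-stab}
	\int_{F_s}\Big(\tfrac12\scal^{F_s} - \big(\rho + j(\nu)\big) - \tfrac12|\chi^+|_g^2\Big)\,\dvol_{F_s} \;=\; \int_{F_s}\big|\upd v_s - \psi_s\big|_g^2\,\dvol_{F_s}.
\end{equation}
Since the leaves are Ricci-flat we have $\scal^{F_s} = 0$, and the dominant energy condition gives $\rho + j(\nu) \geq \rho - |j|_g \geq 0$; thus the left-hand side of \eqref{eq:plan-stab} is $\leq 0$ while the right-hand side is $\geq 0$, so both vanish, forcing on every leaf $\chi^+ = 0$, $\rho + j(\nu) = 0$ and $\psi_s = \upd v_s = \upd^{F_s}\log\beta_s$. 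The first of these settles the tangential directions of step (i). For $Y = \nu$, I would combine $\psi_s = \upd^{F_s}\log\beta_s$ with the elementary identity $\nabla_\nu\nu = -\grad^{F_s}\log\beta_s$ — valid for the unit normal of the foliation, whose leaves are the level sets of $s\colon M \to [0,\ell]$ and whose lapse is $\beta_s = |\grad s|_g^{-1}$ — to conclude $\nabla_\nu\nu = -\psi_s^\sharp$. This establishes step (i), and \eqref{eq:OlConn} then gives $\overline{\nabla}_Y W = k(\nu,Y)\,W$ for all $Y$.

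For step (iii): the $1$-form $k(\nu,\blank)$ is closed. Using the second identity of \eqref{eq:plan-sys} one computes $\upd\big(k(\nu,\blank)\big)(X,Y) = (\nabla_X k)(\nu,Y) - (\nabla_Y k)(\nu,X)$ (the terms involving $\nabla\nu$ assemble into an expression symmetric in $X,Y$ and drop out). Its restriction to each leaf is $\upd^{F_s}\psi_s = \upd^{F_s}\upd^{F_s}\log\beta_s = 0$, and its mixed component $\upd\big(k(\nu,\blank)\big)(\nu, X) = (\nabla_\nu k)(\nu, X) - (\nabla_X k)(\nu,\nu)$ vanishes by the momentum constraint $j = \div^g k - \upd\tr^g k$ — whose tangential part is trivial because $j^\sharp = -\rho\,\nu$ by step (i) — together with $\chi^+ = 0$ and the Ricci-flatness of $F_s$; this is a manifestation, now read off purely on the initial data, of the rigidity of the Einstein tensor in the marginal case $\rho = |j|_g$ from \cite[Lem.~3]{Gloeckle:2024_b}. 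Granted closedness, exactness is topological: since $M \cong [0,\ell]\times F$, the inclusion $F_s \hookrightarrow M$ of any leaf is a homotopy equivalence, so the isomorphism $H^1(M;\R) \xrightarrow{\sim} H^1(F_s;\R)$ sends $\big[k(\nu,\blank)\big]$ to $[\psi_s] = \big[\upd^{F_s}\log\beta_s\big] = 0$. Hence $k(\nu,\blank) = \upd h$ for some $h \in \GF(M)$, and $u \coloneqq e^{-h} > 0$ solves both identities of \eqref{eq:plan-sys}; then $V \coloneqq ue_0 + u\nu = uW$ is future-lightlike with $\overline{\nabla}V = 0$.

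The main obstacle will be step (i): setting up the MOTS stability operator, the variation formula $\del_s\theta^+ = L_{F_s}\beta_s$, and the identity \eqref{eq:plan-stab} with all signs under control — together with the Codazzi/constraint computation $\upd\big(k(\nu,\blank)\big)(\nu,\cdot) = 0$ inside step (iii), which is the genuinely calculational heart of the argument. The cohomological step and the bookkeeping that turns a solution of \eqref{eq:plan-sys} into the asserted $\overline{\nabla}$-parallel vector field should be routine.
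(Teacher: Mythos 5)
Your proposal does not prove \cref{Cor:LocGeomUniqMOTS}; it attempts \cref{Thm:ExParVF}. Even granted in full, it would only produce the lightlike $\overline{\nabla}$-parallel vector field $V$ on $M$, and would stop there. The corollary still requires: verifying the local rigidity property of the triple $(g_{|M_0},k_{|M_0},V_{|M_0})$ demanded by \cref{Thm:RigidSpacetime} (shrinking to a ball $W$ with connected complement so that perturbed data $(g',k')$ still satisfy the hypotheses of \cref{Thm:RigidMOTS}, applying \cref{Thm:RigidMOTS,Thm:ExParVF} to get $V'$, and using uniqueness of $\overline{\nabla}$-parallel transport along the connected set $M\setminus W$ to see $V' = V$ there); then applying \cref{Thm:RigidSpacetime} to embed any two DEC extensions into a common Killing development; and finally invoking \cref{Prop:KDSatDEC} to show that Killing development satisfies the spacetime DEC, which settles existence. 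None of this appears in your write-up, and it cannot be waved off as routine bookkeeping -- it is the actual content of the corollary.

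There is also a genuine gap in step (iii). You assert that the mixed component $\upd\bigl(k(\nu,\blank)\bigr)(\nu,X) = (\nabla_\nu k)(\nu,X) - (\nabla_X k)(\nu,\nu) = -\lambda(X)$ vanishes ``by the momentum constraint \ldots\ together with $\chi^+ = 0$ and the Ricci-flatness of $F_s$,'' invoking \cite[Lem.~3]{Gloeckle:2024_b}. This is false, and the paper says so explicitly after \cref{Lem:2for3}: the momentum constraint with $j_{|TF_\tau} = 0$ only identifies $\lambda(X)$ with $\tr^{F_\tau}\bigl((\nabla k)(X,\blank) - \nabla_X k\bigr)$, which by \cref{Lem:2for3} equals $-\tfrac12\strphi^{-1}\bigl(\div^{F_\tau}\dot g_\tau - \upd\tr^{F_\tau}\dot g_\tau\bigr)(X)$, a quantity controlled by the $s$-derivative of the leaf metrics and \emph{not} forced to zero by the hypotheses you list. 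Establishing $\lambda=0$ is exactly the content of \cref{Thm:RicciFlatDefSatJEq}, which needs the deformation theory of Ricci-flat metrics (the TT-decomposition and the fact that infinitesimal Einstein deformations of Ricci-flat metrics are TT-tensors, \cite[Thm.~12.30]{Besse:1987}, a Hodge decomposition of the longitudinal part, and the closedness $\upd(\strphi\lambda)=0$ from \cref{Lem:Closed}). Your appeal to \cite[Lem.~3]{Gloeckle:2024_b} does not help: that lemma determines the Einstein tensor of a hypothetical DEC spacetime extension, whereas $\lambda$ is a component of the ambient Riemann tensor that the Einstein tensor does not constrain -- and at this stage of the argument no ambient spacetime exists anyway, since the whole point is to construct $V$ purely from the initial data. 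Steps (i) and (ii), and the final cohomological exactness argument, are essentially sound and parallel the paper (your $\overline{\nabla}_Y W = k(\nu,Y)W$ and the reduction to closedness of $k(\nu,\blank)$ is the same as the paper's reduction to $\overline{R}(\cdot,\cdot)(e_0+\nu)=0$), but the crux $\lambda=0$ is precisely what is missing.
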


Lastly, let us also note that there is a third situation where \cref{Thm:RigidSpacetime} can be applied.
Namely, Hirsch and Zhang recently showed that asymptotic conditions can also give rise to a lightlike $\overline{\nabla}$-parallel vector field.

\begin{Satz}[{\cite[Thm.~1.1]{Hirsch.Zhang:2024p}}] \label{Thm:RigidAsympt}
	Let $(M, g, k)$ be a (connected) $C^{2,\alpha}$-asymptotically flat DEC initial data set with decay rate $q \in (\frac{n-2}{2}, n-2]$.
	Suppose that $M$ is spin and that $E = |P|$ for some end.
	Then $M$ embeds into the pp-wave spacetime $(\R^{n+1},\overline{g})$ with 
	\begin{gather*}
		\overline{g} = -\upd s \otimes \upd v - \upd v \otimes \upd s + f \upd s^2 + g_{\R^{n-1}}
	\end{gather*}
	for some function $f \in C^\infty(\R \times \R^{n-1})$, viewed as function on $\R^{n+1}$ not depending on $v$, that is superharmonic as function on $F_s = \{s\} \times \R^{n-1}$ for all $s \in \R$.
	Moreover, $M$ is diffeomorphic to $\R^n$ (\ie has only one end in particular) and if $E = |P| > 0$, then the spacetime above may be chosen such that $\frac{\del}{\del v}_{|M}$ is asymptotic to $E e_0 + P$.
\end{Satz}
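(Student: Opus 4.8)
\emph{Proof proposal.} Since this is a rigidity companion to the spacetime positive mass theorem and $M$ is spin, the plan is to run a Dirac--Witten argument. On the asymptotically flat initial data set $(M,g,k)$ one has the Dirac--Witten operator $\DW$ and the connection $\overline{\nabla}$ on the (restricted) spacetime spinor bundle, linked by a Weitzenböck formula $\DW^{2} = \overline{\nabla}^{*}\overline{\nabla} + \mathcal{R}$ with $\langle\mathcal{R}\phi,\phi\rangle \geq \tfrac12(\rho - |j|_{g})|\phi|^{2} \geq 0$ by the DEC. For a fixed asymptotically constant spinor $\phi_{0}$ at the distinguished end (where $E=|P|$), the decay rate $q \in (\tfrac{n-2}{2}, n-2]$ is exactly what makes the cross terms in the associated Witten boundary integral vanish and lets one solve $\DW\phi = 0$ with $\phi - \phi_{0}$ in a weighted Sobolev space of negative weight; the boundary integral is then a universal positive multiple of $E|\phi_{0}|^{2} + \langle P\cdot e_{0}\cdot\phi_{0},\phi_{0}\rangle$. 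The endomorphism $\phi_{0}\mapsto E\phi_{0} + P\cdot e_{0}\cdot\phi_{0}$ on constant spinors has eigenvalues $E \pm |P|$, hence nontrivial kernel precisely when $E = |P|$ (and is identically $0$ when $E = |P| = 0$); choosing $\phi_{0}$ in that kernel kills the boundary term, so $\int_{M}(|\overline{\nabla}\phi|^{2} + \langle\mathcal{R}\phi,\phi\rangle) = 0$. As both integrands are nonnegative, $\overline{\nabla}\phi \equiv 0$ and $\mathcal{R}\phi \equiv 0$, and $\phi$ is nowhere vanishing, being parallel and asymptotic to $\phi_{0} \neq 0$.

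Next, pass to the Dirac current $V_{\phi} = u_{\phi}e_{0} - U_{\phi} \in \Gamma(\overline{T}M)$ as in the discussion following \cref{Thm:RigidSpin}. The spin-algebraic identities give that $\overline{\nabla}\phi = 0$ forces $\overline{\nabla}V_{\phi} = 0$, \ie \eqref{eq:OlPar} holds for $V_{\phi}$, that $u_{\phi} = |\phi|^{2} > 0$ and $V_{\phi}$ is future-causal, and that $\overline{g}(V_{\phi},V_{\phi})$ is constant (since $\overline{\nabla}$ preserves $\overline{g}$ on $\overline{T}M$). For the extremal $\phi_{0}$ chosen above, normalized by $|\phi_{0}|^{2} = E$ when $E = |P| > 0$, one has $u_{\phi} \to E$ and $U_{\phi} \to -P$ at the end, so $\overline{g}(V_{\phi},V_{\phi}) \to |P|^{2} - E^{2} = 0$; hence $V_{\phi}$ is a nowhere-vanishing future-lightlike $\overline{\nabla}$-parallel vector field, asymptotic to $Ee_{0} + P$. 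Together with DEC, $\mathcal{R}\phi = 0$ moreover yields $\rho = |j|_{g}$ and the MOTS-type relations ($j^{\sharp}$ orthogonal to the leaves below and $\rho + j(\nu) = 0$). When $E = |P| = 0$, every constant spinor extends to a parallel one; one keeps one with nontrivial null current, and the warping function below is then $f \equiv 0$.

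Then reconstruct the spacetime from $V_{\phi}$. By \eqref{eq:OlPar}, $\upd U_{\phi}^{\flat} = 0$, so $U_{\phi}^{\flat}$ defines an involutive distribution with totally geodesic leaves satisfying $\chi^{+} = 0$; exactly as in \cref{Thm:RigidSpin}, $\phi$ induces a nontrivial parallel spinor on each leaf, which is therefore Ricci-flat, and, being an asymptotically flat spin manifold carrying a parallel spinor, is a flat $\R^{n-1}$ (in the compact setting of \cref{Thm:RigidMOTS} the analogous argument produces a flat torus instead). A flow argument along $U_{\phi}$ then shows the foliation is a trivial product, so $M \cong \R \times \R^{n-1} = \R^{n}$ with a single end; in particular $M$ is simply connected and $U_{\phi}^{\flat} = \upd s$ for a proper surjective $s\colon M \to \R$. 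Finally, form the Killing development $(\R \times M, -U_{\phi}^{\flat}\otimes\upd v - \upd v\otimes U_{\phi}^{\flat} + g)$ of $(g,k,V_{\phi})$ -- a DEC spacetime by \cref{Prop:KDSatDEC} -- with parallel null field $\del_{v}$; in coordinates adapted to $s$ and to the product of flat leaves it becomes $(\R^{n+1}, -\upd s\otimes\upd v - \upd v\otimes\upd s + f\,\upd s^{2} + g_{\R^{n-1}})$, whose DEC is equivalent to $\Delta_{\R^{n-1}}f \leq 0$, \ie superharmonicity of $f$ on each slice $F_{s}$ (again \cref{Prop:KDSatDEC}). Then $M$ embeds as $\{0\}\times M$, and $\del_{v}|_{M} = V_{\phi}$ is asymptotic to $Ee_{0} + P$, as claimed.

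The main obstacle is twofold. Analytically, the delicate point is to extract genuine rigidity -- $\overline{\nabla}\phi \equiv 0$, not merely $E = |P|$ -- by running the Witten argument in the sharp weighted function spaces, so that under $q > \tfrac{n-2}{2}$ the boundary integral is exactly the ADM expression and the constructed spinor is honestly asymptotic to $\phi_{0}$. Geometrically, the harder part is the passage from the pointwise datum that $V_{\phi}$ is $\overline{\nabla}$-parallel and null to the global conclusions: that the leaves are mutually isometric flat Euclidean spaces, that $M \cong \R^{n}$ with the foliation a trivial product, and that the Killing development assembles into precisely the stated pp-wave normal form with $f$ a function of the base alone. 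This does not follow from the algebraic identities, and it is here that asymptotic flatness -- pinning down the leaf geometry and the behaviour of $s$ at infinity -- enters a second, essential time.
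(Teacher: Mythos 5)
This theorem is not proved in the paper at all: it is quoted from Hirsch--Zhang \cite{Hirsch.Zhang:2024p}, and the only ingredient the paper itself supplies is the final sentence (that $\frac{\del}{\del v}_{|M}$ may be chosen asymptotic to $Ee_0+P$ when $E=|P|>0$), which is extracted in the proof of \cref{Cor:LocGeomUniqAsympt} by inspecting Hirsch--Zhang's Theorem~3.1 and their Killing development, so your attempt can only be measured against the cited source. Your outline -- Dirac--Witten rigidity at $E=|P|$ yielding a $\overline{\nabla}$-parallel spinor, its lightlike parallel Dirac current, flat leaves and $M\cong\R^n$, then the Killing development put into Brinkmann/pp-wave form with superharmonic $f$ -- is essentially the route of that cited proof, and you correctly single out (but leave open, as you acknowledge) the two genuinely hard steps: the sharp weighted analysis needed to get $\overline{\nabla}\phi\equiv 0$ rather than just the mass identity, and the global argument that the leaves are flat $\R^{n-1}$ and the foliation is a trivial product.
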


The spacetime $(\R^{n+1}, \overline{g})$ is indeed a pp-wave spacetime in our sense since the lightlike coordinate vector field $\frac{\del}{\del v}$ is ($\overline{\nabla}$-)parallel, which follows directly from the Koszul formula.
Further straightforward calculation shows that $\Ein^{\overline{g}} = \frac{1}{2}(\triangle^{F_s} f)\frac{\del}{\del v}^\flat \otimes \frac{\del}{\del v}^\flat$, which implies that the spacetime is DEC for superharmonic $f$.
The final sentence is not part of the original theorem but follows directly from its proof as we will remark in the proof of \cref{Cor:LocGeomUniqAsympt}.
Even though \Cref{Thm:RigidAsympt} states that $(M, g, k)$ embeds into a pp-wave spacetime that satisfies the DEC, it does not exclude the possibility of a different DEC spacetime extension of $(M, g, k)$.
This follows from \cref{Thm:RigidSpacetime} as \cref{Thm:RigidAsympt} applies to any DEC initial data set resulting form a change of $(g,k)$ in a bounded region.
In particular, we obtain local geometric uniqueness again.

\begin{Kor} \label{Cor:LocGeomUniqAsympt}
	Let $(g,k)$ be an asymptotically flat initial data set on a manifold $M$ and suppose that the assumptions of \cref{Thm:RigidAsympt} are satisfied.
	Then there is a DEC spacetime extension of the initial data set $(M, g, k)$ and this extension is locally geometrically unique. 
\end{Kor}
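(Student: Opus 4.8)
The plan is to derive both assertions from \cref{Thm:RigidAsympt} together with \cref{Thm:RigidSpacetime}. For existence, applying \cref{Thm:RigidAsympt} to $(M,g,k)$ directly embeds $M$ as a spacelike hypersurface into the pp-wave $(\R^{n+1}, \overline{g})$ with $f$ superharmonic on each slice $F_s$. As noted immediately after \cref{Thm:RigidAsympt}, $\Ein^{\overline{g}} = \frac{1}{2}(\triangle^{F_s} f)\,\frac{\del}{\del v}^\flat \otimes \frac{\del}{\del v}^\flat$; since $\frac{\del}{\del v}$ is future-lightlike, $\overline{g}(\frac{\del}{\del v}, \blank)$ has a fixed sign on each half of the light cone, so this tensor evaluates non-negatively on pairs of causal vectors lying in a common half whenever $\triangle^{F_s} f \geq 0$, and hence $(\R^{n+1}, \overline{g})$ is a DEC spacetime extension of $(M,g,k)$. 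For uniqueness, put $V \coloneqq \frac{\del}{\del v}_{|M} \in \Gamma(\overline{T}M)$ under the canonical identification $T\overline{M}_{|M} \cong \overline{T}M$; since $\frac{\del}{\del v}$ is parallel and future-lightlike in $(\R^{n+1}, \overline{g})$, the field $V$ is future-lightlike and $\overline{\nabla}$-parallel, and we write $V = ue_0 - U$. It then suffices to verify the local rigidity of the triple $(g,k,V)$ and invoke \cref{Thm:RigidSpacetime}; this verification is where I expect the real work to lie.

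To check local rigidity, fix $p \in M$ and a neighborhood $\tilde{W}$. Since $M \cong \R^n$ by \cref{Thm:RigidAsympt} (with $n \geq 3$), choose a sufficiently small ball $W$ with $p \in W \subseteq \tilde{W}$, so that $M \setminus W$ is connected and $M \setminus \overline{W}$ is a neighborhood of infinity. Let $(g^\prime, k^\prime)$ be any DEC initial data set coinciding with $(g,k)$ on $M \setminus W$. As the modification is confined to the precompact set $\overline{W}$, the pair $(g^\prime, k^\prime)$ is again $C^{2,\alpha}$-asymptotically flat with the same decay rate $q$, has the same ADM energy-momentum $(E,P)$ (computed by a surface integral near infinity), and $M$ remains spin with $E = |P|$ at the relevant end; hence \cref{Thm:RigidAsympt} applies to $(g^\prime, k^\prime)$ and yields a future-lightlike $\overline{\nabla}^\prime$-parallel vector field $V^\prime$ on $M$, namely the restriction of $\frac{\del}{\del v}$ in the corresponding pp-wave. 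A closer inspection of the proof of \cref{Thm:RigidAsympt} --- the remark promised in the discussion preceding this corollary --- shows that the parallel lightlike field produced there is the Dirac current of a parallel spinor asymptotic to a suitably chosen constant spinor, and therefore restricts on $M$ to a vector field asymptotic to the causal vector $Ee_0 + P$ fixed by the ADM energy-momentum; since $(g,k)$ and $(g^\prime, k^\prime)$ carry the same $(E,P)$, both $V$ and $V^\prime$ tend to $Ee_0 + P$ at infinity. On $M \setminus W$ the connections $\overline{\nabla}$ and $\overline{\nabla}^\prime$ coincide, so $Z \coloneqq V - V^\prime$ is $\overline{\nabla}$-parallel there and tends to $0$ at infinity; in asymptotically flat coordinates the connection form of $\overline{\nabla}$ is integrable along rays, so a $\overline{\nabla}$-parallel section vanishing in the limit at infinity must already vanish on a neighborhood of infinity, hence on the whole connected set $M \setminus W$. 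Thus $V^\prime = V$ on $M \setminus W$, which is precisely the local rigidity condition for $(g,k,V)$.

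With local rigidity in hand, \cref{Thm:RigidSpacetime} shows that every DEC spacetime extension of $(M,g,k)$ contains a neighborhood of $M$ that embeds isometrically, as a codimension-$0$ submanifold, into the Killing development $(\R \times M, -U^\flat \otimes \upd v - \upd v \otimes U^\flat + g)$, sending $M$ to $\{0\} \times M$ canonically and the future of $M$ into $\R_{>0} \times M$. For two such extensions, the images of these neighborhoods are open subsets of the Killing development both containing $\{0\} \times M$; intersecting them and pulling back produces neighborhoods of $M$ in the two extensions together with a time-orientation preserving isometry between them fixing $M$, which is the asserted local geometric uniqueness. (Consistently, the pp-wave extension of the first paragraph is one such extension and itself embeds into this Killing development.) It remains to handle the degenerate case $E = |P| = 0$: there the rigidity in the spacetime positive mass theorem identifies $(M,g,k)$, and likewise every bounded modification $(g^\prime, k^\prime)$, with the initial data on a spacelike slice of Minkowski space, so one may take $V$ and $V^\prime$ to be the restrictions of one fixed constant future-lightlike vector field of $\R^{n+1}$, matched near infinity, and the argument above then applies verbatim. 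The principal obstacle throughout is the local rigidity verification --- concretely, matching the newly obtained parallel vector field $V^\prime$ to $V$ outside $W$ --- which rests on the asymptotic identification of $\overline{\nabla}$-parallel vector fields afforded by the invariance of the ADM energy-momentum under modifications in a bounded region.
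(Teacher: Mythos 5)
Your overall strategy coincides with the paper's: obtain $V = \frac{\del}{\del v}_{|M}$ from \cref{Thm:RigidAsympt}, verify the local rigidity property by matching any newly produced $\overline{\nabla}$-parallel field $V'$ to $V$ outside the perturbation region via its asymptotics, and conclude with \cref{Thm:RigidSpacetime}. The one step argued genuinely differently is the uniqueness, on a connected neighborhood of infinity, of a lightlike $\overline{\nabla}$-parallel vector field with prescribed asymptotic value: you integrate the connection form of $\overline{\nabla}$ along rays, using that the $C^{2,\alpha}_{-q}$-decay makes it $L^1$ (since $q > 0$), so a parallel section with zero limit must vanish. The paper instead uses a softer algebraic argument: $\overline{g}(V-V',V-V')$ is locally constant, tends to zero at infinity, hence vanishes; two lightlike vectors with lightlike difference are proportional; and the common nonzero asymptotic value $Ee_0+P$ forces the factor to be one. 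Both are sound and essentially interchangeable.

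The place where you do have a real gap is the case $E = |P| = 0$. You write that the argument above then applies verbatim, but the asymptotic-uniqueness step you invoke has no content there: $Ee_0 + P = 0$, and \cref{Thm:RigidAsympt} no longer normalizes the parallel field obtained for $(g',k')$, so nothing in the cited argument forces $V'$ to coincide with your chosen constant lightlike vector outside $W$. The paper sidesteps this case entirely: by Witten's positive energy theorem, $E=0$ forces $(M,g,k)$ to be a vacuum (Minkowski) initial data set, and local geometric uniqueness already follows from the conservation theorem of Hawking--Ellis together with local uniqueness for the vacuum Cauchy problem, without invoking \cref{Thm:RigidSpacetime} at all. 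If you insist on routing the $E=0$ case through \cref{Thm:RigidSpacetime}, you would have to replace the verbatim claim by an explicit construction: note that any competitor $(g',k')$ has $E'=|P'|=0$, hence is also Minkowski data by Witten, so $\overline{\nabla}'$ is flat, and $V'$ can then be obtained by $\overline{\nabla}'$-parallel extension of $V_{|M\setminus W}$ into $W$ --- a different argument from the asymptotic matching you use when $E>0$.
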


The main part of the article consists of two sections that can be read independently.
The first one, \ie \cref{Sec:ExParVF}, is devoted to the proof of \cref{Thm:ExParVF} improving Eichmair, Galloway and Mendes' initial data rigidity result.
Here, we first analyze the variation formula for the null second fundamental form and observe that in the rigid situation, there is a term whose vanishing has not been exploited in \cite{Eichmair.Galloway.Mendes:2021}.
Although this vanishing is not entirely new -- it appears for instance as \cite[eq.~(3.26)]{Galloway.Mendes:2018} -- it seems to be a novel observation that as consequence there is a lightlike $\overline{\nabla}$-parallel vector field along the leaves.
We then remark that the existence of a lightlike $\overline{\nabla}$-parallel vector field on all of $M$ is equivalent to the vanishing of a single component of the curvature tensor.
With the help of the deformation theory for Ricci-flat metrics we will be able to conclude that this term is zero.

In the second section, \ie \cref{Sec:RigidSpacetime}, we give the proof of \cref{Thm:RigidSpacetime}, our spacetime rigidity theorem.
The rather straightforward, yet technical, proof of the theorem is contained in the author's thesis \cite{Gloeckle:2024_c} but has so far not been part of a journal article.
After this proof, we carefully prove the corollaries, the ones using the non-spinorial and the asymptotically flat initial data rigidity results being entirely new.

\subsection*{Acknowledgements}
I am grateful to Eduardo Hafemann for patiently explaining to me the general logic of the MOTS existence results.
Moreover, I want to thank Greg Galloway for discussions on the topic at the SLMath, Berkeley, and further guidance through the MOTS literature.
I am also thankful to Bernd Ammann for all of his support.
During different stages of the project, I was funded by the DFG through the SFB 1085 “Higher Invariants” in Regensburg and the SPP 2026 “Geometry at Infinity”.

\section{Constructing lightlike parallel vector fields} \label{Sec:ExParVF}
The goal of this section is to prove \cref{Thm:ExParVF} \ie to construct a lightlike $\overline{\nabla}$-parallel vector field in the setting of \cref{Thm:RigidMOTS}.
We complete this in two steps.
First, we construct such a vector field along each of the leaves foliating the manifold.
Second, we show that an appropriate curvature term vanishes so that a leafwise constant rescaling of these vector fields produces a vector field that is $\overline{\nabla}$-parallel everywhere.

Throughout this section, let $M$ be a manifold with boundary and $(g,k)$ an initial data set on $M$.
Suppose that there is a diffeomorphism $M \cong [0,\ell] \times F$ with $F$ a closed manifold, $\ell \in \R_{>0}$, and denote by $s \colon M \to [0,\ell]$ the projection on the second component.
We assume furthermore that each of the canonical leaves $F_{\tau} = t^{-1}(\tau)$ for $\tau \in [0,\ell]$ satisfies $\chi^+ = 0$ with respect to the unit normal $\nu$ pointing in the direction of $\grad(s)$.
Let $\strphi \coloneqq \upd s(\nu)^{-1}$.
The flow of the vector field $\strphi \nu$ maps leaves to leaves.
It hence gives rise to a diffeomorphism $M \cong [0,\ell] \times F$ with the same canonical foliation and the additional property that the vector field $\frac{\del}{\del s}$ that corresponds to differentiating the $[0,\ell]$-coordinate is equal to $\strphi \nu$ and thus orthogonal to the leaves.
We may thus assume that we work in the following setup:

\begin{Set} \label{Set:Prod}
	Let $M = [0,\ell] \times F$ with $F$ a closed manifold be endowed with an initial data set $(g,k)$ such that
	\begin{itemize}
		\item $g= \strphi^{2} \upd s^2 + g_s$, where $(g_s)_{s \in [0,\ell]}$ is a family of metrics on $F$ and
		\item all leaves $F_\tau$ with $\tau \in [0,\ell]$ are MOTS.
	\end{itemize}
\end{Set}

\begin{Prop} \label{Prop:AddRigid}
	Assume \cref{Set:Prod} and additionally that the DEC holds and that all the metrics $(g_s)_{s \in [0,\ell]}$ on the leaves are scalar-flat.
	Then the positive function $\strphi = \upd s(\nu)^{-1} \in C^\infty(M)$ satisfies 
	\begin{gather*}
		\upd \log \strphi(X) = k(X,\nu)
	\end{gather*}
	for all $X$ tangent to the leaves $F_\tau$.
	Moreover, the DEC holds marginally in the precise sense $\rho + j(\nu) = 0$ and all the leaves satisfy $\chi^+ = 0$.
\end{Prop}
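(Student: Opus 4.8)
The plan is to run the hypotheses through the first variation formula for the future null expansion along the foliation, equivalently through the MOTS stability operator of the leaves. Fix $\tau \in [0,\ell]$, write $g_\tau$ for the metric induced on the leaf $F_\tau$, and let $W \in \Gamma(TF_\tau)$ be the $g_\tau$-dual of the $1$-form $X \mapsto k(X,\nu)$ on $TF_\tau$. Recall that under a purely normal deformation of $F_\tau$ with lapse function $\psi$, the first variation of $\theta^+$ equals $L_\tau\psi$ with
\begin{gather*}
	L_\tau\psi = -\triangle_{g_\tau}\psi + 2\langle W, \grad\psi\rangle_{g_\tau} + \Big(\tfrac12\scal^{g_\tau} - (\rho + j(\nu)) - \tfrac12|\chi^+|_{g_\tau}^2 + \div_{g_\tau} W - |W|_{g_\tau}^2\Big)\psi,
\end{gather*}
all curvature quantities referring to $(F_\tau, g_\tau)$; this is the standard expression underlying the MOTS stability operator, \cf \cite[eq.~(3.26)]{Galloway.Mendes:2018} and \cite{Eichmair.Galloway.Mendes:2021}. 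In the setting of \cref{Set:Prod} the foliation vector field is $\frac{\del}{\del s} = \strphi\nu$, hence purely normal with lapse $\strphi$, and since every leaf is a MOTS the function $\theta^+$ vanishes identically on $M$; differentiating in the $s$-direction therefore gives $L_\tau(\strphi_{|F_\tau}) = \del_s\theta^+ = 0$. As $\scal^{g_\tau} = 0$ by hypothesis, this becomes
\begin{gather*}
	-\triangle_{g_\tau}\strphi + 2\langle W, \grad\strphi\rangle_{g_\tau} + \Big({-}(\rho + j(\nu)) - \tfrac12|\chi^+|_{g_\tau}^2 + \div_{g_\tau} W - |W|_{g_\tau}^2\Big)\strphi = 0.
\end{gather*}

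Next I divide this identity by the positive function $\strphi$ and integrate over the closed leaf $F_\tau$. Using $\int_{F_\tau}\div_{g_\tau} W\,\dvol_{g_\tau} = 0$, and the pointwise identity $\tfrac{\triangle_{g_\tau}\strphi}{\strphi} = \triangle_{g_\tau}\log\strphi + |\grad\log\strphi|_{g_\tau}^2$ together with $\int_{F_\tau}\triangle_{g_\tau}\log\strphi\,\dvol_{g_\tau} = 0$ (which yields $\int_{F_\tau}\tfrac{\triangle_{g_\tau}\strphi}{\strphi}\,\dvol_{g_\tau} = \int_{F_\tau}|\grad\log\strphi|_{g_\tau}^2\,\dvol_{g_\tau}$), completing the square turns the integrated identity into
\begin{gather*}
	\int_{F_\tau}\Big(|\grad\log\strphi - W|_{g_\tau}^2 + (\rho + j(\nu)) + \tfrac12|\chi^+|_{g_\tau}^2\Big)\,\dvol_{g_\tau} = 0.
\end{gather*}
The first and third summands of the integrand are manifestly non-negative, and so is the middle one: as $\nu$ is a $g$-unit vector, $j(\nu) \geq -|j|_g$, whence $\rho + j(\nu) \geq \rho - |j|_g \geq 0$ by the dominant energy condition. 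Hence the integrand vanishes identically on $F_\tau$, and each summand does separately. The vanishing of the first gives $\grad\log\strphi = W$, \ie $\upd\log\strphi(X) = k(X,\nu)$ for all $X$ tangent to $F_\tau$; the vanishing of the second is $\rho + j(\nu) = 0$; and the vanishing of the third is $\chi^+ = 0$ on $F_\tau$. Since $\tau \in [0,\ell]$ was arbitrary, all three assertions hold on $M$.

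The one genuinely delicate point is to set down the variation formula with the correct signs and normalisations -- above all the drift term $2\langle W, \grad\psi\rangle_{g_\tau}$ with $W$ dual to $k(\blank,\nu)_{|TF_\tau}$ and the choice of $\nu$ pointing in the direction of $\grad s$ -- and to note that no extra term enters because $\frac{\del}{\del s} = \strphi\nu$ has no component tangent to the leaves; beyond that the argument is just an integration by parts on a closed manifold followed by completing a square. I would also emphasise that of the three pointwise identities obtained, only $\upd\log\strphi(X) = k(X,\nu)$ along the leaves is not already implicit in \cite{Eichmair.Galloway.Mendes:2021}; it is exactly this term that is exploited here and that will subsequently be upgraded to a lightlike $\overline{\nabla}$-parallel vector field.
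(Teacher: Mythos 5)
Your argument is correct and is essentially the same as the paper's: both analyze the first variation of $\theta^+$ along the foliation with lapse $\strphi$, divide by $\strphi$, integrate over the closed leaf, and complete a square to conclude that $\grad^{F_\tau}\log\strphi = \pi^{\tan}(k(\nu,\blank)^\sharp)$, $\rho + j(\nu) = 0$, and $\chi^+ = 0$. The only cosmetic difference is that the paper first substitutes $Y = X - \grad^{F_\tau}\log\strphi$ to simplify the pointwise formula before integrating, whereas you integrate first and then complete the square; the content is identical.
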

\begin{Kor} \label{Cor:LeafPar}
	Assume \cref{Set:Prod} and additionally that the DEC holds and that all the metrics $(g_s)_{s \in [0,\ell]}$ on the leaves are scalar-flat.
	Then the vector field $V \coloneqq ue_0 + u\nu$ with $u = \strphi^{-1}$ satisfies $\overline{\nabla}_X V = 0$ for all $X$ tangent to the leaves $F_\tau$.
\end{Kor}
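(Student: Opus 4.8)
The plan is to verify $\overline{\nabla}_X V = 0$ for $X$ tangent to the leaves directly from the defining formula \eqref{eq:OlConn} of the connection $\overline{\nabla}$, feeding in the geometric information provided by \cref{Prop:AddRigid}. Writing $V = ue_0 + u\nu$ in the shape $xe_0 + X$ with $x = u$ and $X = u\nu$, formula \eqref{eq:OlConn} evaluated on $Y \in TF_\tau$ reads
\[
	\overline{\nabla}_Y V = \bigl(\del_Y u + u\,k(Y,\nu)\bigr)\,e_0 + \bigl(u\,k(Y,\blank)^\sharp + (\del_Y u)\,\nu + u\,\nabla_Y\nu\bigr),
\]
using the Leibniz rule $\nabla_Y(u\nu) = (\del_Y u)\nu + u\,\nabla_Y\nu$ for the Levi-Civita connection on $M$. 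So the claim separates into the vanishing of the $e_0$-component and of the $TM$-component.

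For the $e_0$-component I would use $u = \strphi^{-1}$, whence $\del_Y u = -\strphi^{-2}\del_Y\strphi = -u\,\upd\log\strphi(Y)$, and then apply the identity $\upd\log\strphi(Y) = k(Y,\nu)$ for $Y$ tangent to the leaves from \cref{Prop:AddRigid}. This yields $\del_Y u + u\,k(Y,\nu) = 0$ at once.

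For the $TM$-component the point is to decompose both $\nabla_Y\nu$ and $k(Y,\blank)^\sharp$ into their parts tangent and normal to $F_\tau$. Differentiating $g(\nu,\nu) \equiv 1$ shows $g(\nabla_Y\nu,\nu) = 0$, so $\nabla_Y\nu$ is leaf-tangential; and $\chi^+ = 0$ (again \cref{Prop:AddRigid}) gives $g(\nabla_Y\nu, Z) = -k(Y,Z)$ for all $Z \in TF_\tau$, \ie $\nabla_Y\nu$ is the negative of the leaf-tangential part of $k(Y,\blank)^\sharp$. Splitting $k(Y,\blank)^\sharp = (k(Y,\blank)^\sharp)^{\top} + k(Y,\nu)\,\nu$, the leaf-tangential contributions $u\,(k(Y,\blank)^\sharp)^{\top}$ and $u\,\nabla_Y\nu$ cancel, and what remains is $\bigl(u\,k(Y,\nu) + \del_Y u\bigr)\nu$, which vanishes by the $e_0$-computation above. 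Hence $\overline{\nabla}_Y V = 0$.

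I do not expect a genuine obstacle here: once \cref{Prop:AddRigid} is in hand, the corollary is pure bookkeeping. The only point requiring a little attention is to remember that the musical isomorphism in \eqref{eq:OlConn} is taken with respect to the full metric $g$ on $M$, so $k(Y,\blank)^\sharp$ really does carry the normal component $k(Y,\nu)\nu$ — and it is exactly this component that is needed to cancel the term $(\del_Y u)\nu$ coming from $\nabla_Y(u\nu)$.
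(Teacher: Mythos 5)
Your proof is correct. It is essentially the same computation as the paper's, only organized dually: the paper exploits that $\overline{\nabla}$ is metric with respect to $\overline{g}$ and checks that $\overline{g}(\overline{\nabla}_X V, W)$ vanishes for $W$ ranging over a spanning set ($W \in TF_\tau$, $W = e_0+\nu$, $W = \nu$), whereas you expand $\overline{\nabla}_X V$ in its $e_0$- and $TM$-components directly from the connection formula~\eqref{eq:OlConn}. Both routes feed in the same two facts from \cref{Prop:AddRigid}, namely $\chi^+ = 0$ and $\upd\log\strphi(\,\cdot\,) = k(\,\cdot\,,\nu)$ on the leaves; your version has the minor merit of not needing to first verify metric-compatibility of $\overline{\nabla}$.
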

\begin{proof}
	Note first that the connection $\overline{\nabla}$ defined by \eqref{eq:OlConn} is metric with respect to the Lorentzian metric $\overline{g}$ on $\overline{T}M$.
	This follows either by direct calculation or from the fact that $\overline{\nabla}$ and $\overline{g}$ are the restriction to $M$ of the Levi-Civita connection and the Lorentzian metric, respectively, of any Lorentzian manifold into which $(M,g,k)$ embeds.
	Thus we may calculate for any $u \in C^\infty(M)$:
	\begin{equation} \label{eq:ParVsK}
	\begin{aligned}
		\overline{g}(\overline{\nabla}_X V, Y) &= (\del_X u) \overline{g}(e_0+\nu, Y) + u \overline{g}(\overline{\nabla}_X(e_0+\nu), Y) = u \chi^+(X,Y) \\
		\overline{g}(\overline{\nabla}_X V, e_0 + \nu) &=  (\del_X u) \overline{g}(e_0+\nu, e_0+\nu) + u \overline{g}(\overline{\nabla}_X(e_0+\nu), e_0+ \nu) \\
			&= \frac{1}{2} u \del_X \overline{g}(e_0+\nu, e_0+\nu) = 0 \\
		\overline{g}(\overline{\nabla}_X V, \nu) &=  (\del_X u) \overline{g}(e_0+\nu, \nu) + u \overline{g}(\overline{\nabla}_X(e_0+\nu), \nu) \\
			&= \del_X u + u k(X,\nu) + \frac{1}{2} u \del_X \overline{g}(\nu, \nu) = \del_X u + u k(X,\nu),
	\end{aligned}
	\end{equation}
	where $X, Y \in TM$ are tangent to the leaves and $V = u(e_0 + \nu)$.
	Now the claim immediately follows by setting $u = \strphi^{-1}$ and invoking \cref{Prop:AddRigid}.
\end{proof}

\begin{proof}[Proof of \cref{Prop:AddRigid}]
	The proof is similar to the one of  \cite[Thm.~3.1]{Galloway:2008}, on which \cite[Thm.~1.2]{Eichmair.Galloway.Mendes:2021} relies, in that it consists of analyzing the first-order variation formula for the future null second fundamental form $\theta^+$.
	For any $\tau \in [0,\ell]$, we consider the variation of $F_\tau = F \times \{\tau\}$ by $(F_s)_{s \in [0,\ell]}$.
	Since the variation vector field is $\frac{\del}{\del s} = \phi \nu$ and $F_\tau$ is a MOTS the formula \cite[Prop~7.32]{Lee:2019}	reads
	\begin{gather*}
		\frac{\del \theta^+}{\del s}_{|s = \tau} = -\triangle^{F_\tau} \strphi + 2\del_X \strphi + (\div^{F_\tau} X - |X|^2 + Q_\tau) \strphi,
	\end{gather*}
	where
	\begin{gather*}
		Q_\tau = \frac{1}{2} \scal^{F_\tau} - \Bigl(\rho + j(\nu)\Bigr) - \frac{1}{2} \left|\chi^+\right|^2 \in C^\infty(F_\tau)
	\end{gather*}
	and $X = \pi^{\tan}(k(\nu,\blank)^\sharp) \in \Gamma(TF_\tau)$.
	Setting $Y \coloneqq X - \grad^{F_\tau}(\log \strphi)$ as in  \cite[Lem.~4.2.5]{Haferman:2023} (which is probably based on \cite[(2.7)-(2.10)]{Galloway.Schoen:2006}) this expression simplifies to
	\begin{gather} \label{eq:VarEqSimp}
		\frac{\del \theta^+}{\del s}_{|s = \tau} = (\div^{F_\tau} Y - |Y|^2 + Q_\tau) \strphi.
	\end{gather}
	Now the left-hand side of \eqref{eq:VarEqSimp} vanishes since all the leaves $F_t$ are MOTS.
	We divide \eqref{eq:VarEqSimp} by $-\strphi$, recall that $\scal^{F_\tau} = 0$, integrate over $F_\tau$ and obtain
	\begin{gather*}
		0 = \int_{F_\tau} \left(\left|Y\right|^2 + \Bigl(\rho + j(\nu)\Bigr) + \frac{1}{2} \left|\chi^+ \right|^2 \right)\dvol^{g_\tau}.
	\end{gather*}
	Since all summands on the right-hand side are non-negative -- the middle one due to the DEC -- all of them must vanish identically.
	In particular, this shows that $Y \equiv 0$, which is equivalent to $k(\nu, \blank)^\sharp - \grad(\log(\strphi))$ being normal to the leaf $F_\tau$.
\end{proof}

\begin{Bem}
	As the proof shows, it would also have been sufficient to only assume non-positivity of the scalar curvature instead of scalar-flatness in \cref{Prop:AddRigid}.
	But in the case of interest, the metrics $(g_s)_{s \in [0,\ell]}$ will be Ricci-flat anyhow and we shall also use this Ricci-flatness in the second part of the argument.
\end{Bem}

Since there is freedom to reparameterize the $s$-coordinate, which results in a leafwise scaling of $\strphi$, the definition of $V$ in terms of $\strphi$ cannot be expected to yield a $\overline{\nabla}$-parallel vector field on all of $M$ without properly fixing the parameterization.
That this can be done amounts to the vanishing of a certain curvature term as we shall see now.

\begin{proof}[Proof of \cref{Thm:ExParVF}]
	We start by showing that $\overline{\nabla}_Z (e_0+\nu)$ is a multiple of $e_0 + \nu$ for all $Z \in TM$.
	This is already clear for all $Z$ tangent to the leaves by \cref{Cor:LeafPar}.
	For $Z = \nu$, we first calculate $\nabla_{\nu} \nu$.
	To do so, let $X_F \in \Gamma(TF)$ be any vector field.
	The product structure $M = [0,\ell] \times F$ allows to define $X(s,p) \coloneqq (0,X_F(p)) \in T_{(s,p)}M$ for all $s \in [0,\ell]$, $p \in F$.
	The resulting vector field $X$ is tangential to all leaves and satisfies $[X, \frac{\del}{\del s}] = 0$.
	Recalling $\frac{\del}{\del s} = \strphi \nu$, this implies $[X, \nu] = - \upd \log \strphi(X) \cdot \nu$.
	Hence, with the help of \cref{Prop:AddRigid}, we get
	\begin{align} \label{eq:NablaNuNu}
		g(\nabla_{\nu} \nu, X) = -g(\nu, \nabla_{\nu} X) = g(\nu, [X, \nu]) = -\upd \log \strphi(X) = -k(X,\nu).
	\end{align}
	Since any vector tangent to a leaf can be extended to such a vector field $X$ and
	\begin{gather*}
		g(\nabla_{\nu} \nu, \nu) = \frac{1}{2} \del_{\nu} g(\nu, \nu) = 0,
	\end{gather*}
	we obtain $\nabla_{\nu} \nu = -\pi^{\tan}(k(\blank, \nu)^\sharp)= -k(\blank, \nu)^\sharp + k(\nu,\nu) \nu$.
	Thus we get
	\begin{gather*}
		\overline{\nabla}_{\nu} (e_0 + \nu) = k(\blank, \nu)^\sharp + k(\nu, \nu) e_0 + \nabla_{\nu} \nu = k(\nu,\nu) (e_0 + \nu).
	\end{gather*}
	
	Hence, there is a (positive) function $u$ such that $u(e_0 + \nu)$ is $\overline{\nabla}$-parallel if and only if the curvature $\overline{R}(Z,W) (e_0 + \nu) \coloneqq \overline{\nabla}_Z \overline{\nabla}_W (e_0 + \nu) - \overline{\nabla}_W \overline{\nabla}_Z (e_0 + \nu) - \overline{\nabla}_{[Z,W]} (e_0 + \nu)$ vanishes for all $Z,W \in \Gamma(TM)$.
	Clearly, at each point it is a multiple of $e_0 + \nu$ and thus it suffices now to show that $\overline{g}(\overline{R}(Z,W) (e_0 + \nu), \nu) = 0$ for all $Z,W \in TM$.
	By \cref{Cor:LeafPar}, $\overline{R}(X,Y) (e_0 + \nu) = 0$ if both $X$ and $Y$ are tangential to the leaves.
	So we just need show that $\overline{g}(\overline{R}(X,\nu) (e_0 + \nu), \nu) = 0$ for all $X \in TF_\tau$, $\tau \in [0,\ell]$, which will be done in \cref{Thm:RicciFlatDefSatJEq}.
	To apply it, note that the equation $\rho + j(\nu) = 0$ from \cref{Prop:AddRigid} together with the DEC implies $j = -\rho\nu^\flat$ (\cf\eqref{eq:MargDEC}) and in particular $j_{|TF_\tau} = 0$ for any leaf $F_\tau$.
\end{proof}

We now fix a leaf $F_{\tau}$ and define the $1$-form $\lambda \in \Omega^1(F_{\tau})$ by
\begin{gather} \label{eq:DefLambda}
	\lambda(X) \coloneqq \overline{g}(\overline{R}(X,\nu)(e_0+\nu),\nu) = \overline{g}(\overline{R}(X,\nu) e_0,\nu) = \nabla_X k(\nu,\nu) - \nabla_{\nu} k(X,\nu).
\end{gather}
By what has been said above and since the particular leaf was arbitrarily chosen, it suffices to show that $\lambda = 0$.

The following two lemmas are obtained straightforward calculations.
In the proofs, we will use the equations $\upd \log \strphi(X) = k(X,\nu)$ and $k(X,Y) = -g(\nabla_X \nu, Y) = g(\nabla_X Y, \nu)$ for $X, Y \in \Gamma(TF_\tau)$ that are, by~\eqref{eq:ParVsK}, equivalent to leafwise $\overline{\nabla}$-parallelism of $u(e_0 + \nu)$.
\begin{Lem} \label{Lem:2for3}
	In \cref{Set:Prod} and under the additional assumption that $u(e_0 + \nu)$ is leafwise $\overline{\nabla}$-parallel for $u = \strphi^{-1}$, the following holds for all $X \in TF_{\tau}$:
	\begin{gather*}
		j(X) + \lambda(X) = -\frac{1}{2}\strphi^{-1}\left(\div^{F_{\tau}}(\dot{g}_{\tau}) - \upd \tr^{F_{\tau}}(\dot{g}_{\tau})\right)(X),
	\end{gather*}
	where $\dot{g}_{\tau} = \frac{\upd}{\upd s}_{|s = \tau} g_s$ and $\lambda$ is defined by~\eqref{eq:DefLambda}.
\end{Lem}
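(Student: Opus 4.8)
The plan is to compute both sides of the asserted identity by unwinding the curvature term $\lambda(X) = \nabla_X k(\nu,\nu) - \nabla_\nu k(X,\nu)$ from \eqref{eq:DefLambda} and the constraint quantity $j(X) = \div^g(k)(X) - \upd\tr^g(k)(X)$ directly in the product coordinates of \cref{Set:Prod}, then recognize the difference as the codimension-one constraint operator applied to $\dot g_\tau$. The starting point is that $\dot g_\tau$ is essentially the second fundamental form of the leaf: since $\frac{\del}{\del s} = \strphi\nu$, one has $\dot g_\tau(X,Y) = (\Lie_{\strphi\nu} g)(X,Y) = 2\strphi\, g(\nabla_X\nu, Y) = -2\strphi\, k(X,Y)$ for $X,Y \in TF_\tau$, using the leafwise relation $k(X,Y) = -g(\nabla_X\nu,Y)$ that holds by the leafwise $\overline\nabla$-parallelism hypothesis. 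So the right-hand side is, up to the factor $-\tfrac12\strphi^{-1}$ and this $-2\strphi$, just $\div^{F_\tau}(k^{\tan}) - \upd\tr^{F_\tau}(k^{\tan})$ where $k^{\tan}$ denotes the restriction of $k$ to the leaf; the two $\strphi$ factors do not cancel cleanly because $\div^{F_\tau}$ does not commute with multiplication by $\strphi$, and that mismatch is exactly what will produce the $\upd\log\strphi(X) = k(X,\nu)$ terms.

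The computation I would carry out: fix $X \in TF_\tau$ and extend it to a leaf-tangent vector field commuting with $\frac{\del}{\del s}$ as in the proof of \cref{Thm:ExParVF}, so $[X,\nu] = -k(X,\nu)\nu$. Expand $j(X) = \sum_i \bigl(\nabla_{e_i} k\bigr)(e_i, X) - \sum_i \bigl(\nabla_X k\bigr)(e_i,e_i)$ in an adapted frame $e_1,\dots,e_{n-1}$ tangent to $F_\tau$ together with $e_n = \nu$. Split each sum into tangential ($i \le n-1$) and normal ($i = n$) contributions. The tangential part assembles into the intrinsic $\bigl(\div^{F_\tau} k^{\tan} - \upd\tr^{F_\tau} k^{\tan}\bigr)(X)$ plus correction terms coming from the difference between the ambient covariant derivative and the Levi-Civita connection of $g_\tau$, i.e.\ terms involving the second fundamental form, hence terms in $k(e_i,e_j)$ contracted against $k(\cdot,\nu)$; these are the terms one rewrites using $k(X,\nu) = \upd\log\strphi(X)$. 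The normal part of the first sum, $\bigl(\nabla_\nu k\bigr)(\nu,X)$, is precisely $-\lambda(X)$ up to the remaining terms $\nabla_X k(\nu,\nu)$ versus the $\Gamma$-corrections, so that $j(X) + \lambda(X)$ collapses to the intrinsic constraint operator applied to $k^{\tan}$, which by the relation $\dot g_\tau = -2\strphi\, k^{\tan}$ and the product rule for $\div^{F_\tau}$ (the extra derivative hitting $\strphi$ contributing $\upd\log\strphi$, again equal to $k(\cdot,\nu)$ and absorbed) becomes $-\tfrac12\strphi^{-1}\bigl(\div^{F_\tau}(\dot g_\tau) - \upd\tr^{F_\tau}(\dot g_\tau)\bigr)(X)$.

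The main obstacle is purely bookkeeping: keeping straight which covariant derivatives are ambient ($\nabla^g$) versus leafwise ($\nabla^{g_\tau}$), and making sure every cross-term involving $\nabla_\nu$ of a leaf-tangent field, or $\nabla_{e_i}\nu$, is rewritten consistently via $k$ and $\nabla_\nu\nu = -k(\cdot,\nu)^\sharp + k(\nu,\nu)\nu$ (established in the proof of \cref{Thm:ExParVF}). I expect no genuine difficulty beyond this: every nonlinear term that appears either cancels against its partner from the other sum or is of the form $(\text{something})\cdot k(X,\nu)$ and gets identified with a $\upd\log\strphi$ term already present in the expansion of $\div^{F_\tau}(\strphi\, k^{\tan})$. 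The cleanest way to organize it is to first prove the purely intrinsic identity $j(X)+\lambda(X) = \bigl(\div^{F_\tau}k^{\tan} - \upd\tr^{F_\tau}k^{\tan}\bigr)(X) + (\text{terms in }k(\cdot,\nu))$, and only at the very end substitute $\dot g_\tau = -2\strphi\, k^{\tan}$ and collect the $\strphi$-derivative terms.
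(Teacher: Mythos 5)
Your proposal is correct and its core computations coincide with the paper's, but there is one meaningful organizational difference worth recording. The paper establishes $j(X) + \lambda(X) = \tr^{F_\tau}\bigl((\nabla k)(X,\blank) - (\nabla_X k)\bigr)$ by passing through the ambient curvature: it writes $j(X)$ as the trace of $\overline g(\overline R(\blank,X)e_0,\blank)$, notes that adding $\lambda$ removes the $\nu$-component of that trace, and then converts $\overline R(\blank,\blank)e_0$ into $\nabla k$ via the Codazzi equation. You instead expand $j(X) = \sum_i(\nabla_{e_i}k)(e_i,X) - \sum_i(\nabla_X k)(e_i,e_i)$ directly in an adapted frame with $e_n=\nu$, and the $i=n$ contributions $(\nabla_\nu k)(\nu,X) - (\nabla_X k)(\nu,\nu)$ cancel against $\lambda(X) = (\nabla_X k)(\nu,\nu) - (\nabla_\nu k)(X,\nu)$ purely by the symmetry of $\nabla_\nu k$ — no Codazzi needed. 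This is a slightly more elementary route to the same intermediate identity. (Your phrasing "up to the remaining terms\ldots{} versus the $\Gamma$-corrections" is muddled: the normal parts cancel $\lambda$ exactly, with no residue.) From there both approaches must compare the ambient leafwise trace of $\nabla k$ with $\nabla^{F_\tau}\dot g_\tau$ and absorb the corrections via $k(\blank,\nu) = \upd\log\strphi$; the paper first proves the untraced skew-symmetrized identity $(\nabla_X k)(Y,Z) - (\nabla_Y k)(X,Z) = -\tfrac12\strphi^{-1}\bigl((\nabla^{F_\tau}_X\dot g_\tau)(Y,Z) - (\nabla^{F_\tau}_Y\dot g_\tau)(X,Z)\bigr)$ and then traces, whereas you plan to substitute $\dot g_\tau = -2\strphi k^{\tan}$ and collect the $\strphi$-derivative terms at the trace level. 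Both produce the same residual terms, namely $\tr^{F_\tau}(k^{\tan})\,k(X,\nu)$ and $\sum_i k(e_i,X)k(e_i,\nu)$, which match on both sides, so your plan would go through once the bookkeeping is executed.
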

\begin{proof}
	Recalling the definitions of $j$ and $\lambda$ as well as the Codazzi equation, we obtain
	\begin{equation} \label{eq:CodazziStep}
		\begin{aligned}
			j(X) + \lambda(X) &= \tr\left(\overline{g}(\overline{R}(\blank, X) e_0, \blank)\right) - \overline{g}(\overline{R}(\nu, X) e_0, \nu) \\
				&= \tr^{F_{\tau}}\left(\overline{g}(\overline{R}(\blank, X) e_0, \blank)\right) \\
				&= \tr^{F_{\tau}}\Bigl((\nabla k)(X,\blank) - (\nabla_X k)\Bigr)
		\end{aligned}
	\end{equation}
	for all $X \in TF_{\tau}$.
	Now, we note that
	\begin{align*}
		\dot{g}_{\tau}(X,Y) = (\Lie_{\frac{\del}{\del s}} g_t)(X,Y) &= g_t\left(\nabla_X \frac{\del}{\del s}, Y\right) + g_s \left(X, \nabla_Y \frac{\del}{\del s} \right) \\
			&= \strphi \Bigl(g_s(\nabla_X \nu, Y) + g_s(X, \nabla_Y \nu) \Bigr) = -2\strphi k(X,Y)
	\end{align*}
	for all $X,Y \in TF_{\tau}$.
	This allows to calculate for all $X, Y, Z \in \Gamma(TF_{\tau})$ that
	\begin{align*}
		2\strphi(\nabla_X k)(Y,Z) &= 2\strphi\del_X(k(Y,Z)) - 2\strphi k(\nabla_X Y, Z) - 2\strphi k(Y, \nabla_X Z) \\
			&= -\strphi \del_X(\strphi^{-1}\dot{g}_{\tau}(Y,Z)) + \dot{g}_{\tau}(\nabla^{F_{\tau}}_X Y, Z) -2\strphi g(\nabla_X Y, \nu)k(\nu, Z) \\
			&\phantom{=}\;+ \dot{g}_{\tau}(Y,\nabla^{F_{\tau}}_X Z) -2 \strphi g(\nabla_X Z, \nu)k(Y, \nu) \\
			&= \upd \log \strphi(X) \dot{g}_{\tau}(Y,Z) - (\nabla^{F_{\tau}}_X \dot{g}_{\tau})(Y,Z) \\
			&\phantom{=}\;+ \dot{g}_{\tau}(X,Y) \upd \log \strphi(Z) + \dot{g}_{\tau}(X,Z) \upd \log \strphi(Y)
	\end{align*}
	and thus
	\begin{gather*}
		(\nabla_X k)(Y,Z) - (\nabla_Y k)(X,Z) = -\frac{1}{2} \strphi^{-1} \left((\nabla^{F_{\tau}}_X \dot{g}_{\tau})(Y,Z) - (\nabla^{F_{\tau}}_Y \dot{g}_{\tau})(X,Z)\right).
	\end{gather*}
	The claimed equation now follows by tracing this over $Y$ and $Z$ and inserting the result into~\eqref{eq:CodazziStep}.
\end{proof}

\begin{Lem} \label{Lem:Closed}
	In \cref{Set:Prod}, we have
	\begin{gather*}
		\Omega^2(F_{\tau}) \ni \upd (\strphi \lambda) = 0
	\end{gather*}
	for $\lambda$ defined by~\eqref{eq:DefLambda}.
\end{Lem}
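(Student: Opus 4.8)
The plan is to prove the slightly stronger statement that $\strphi\lambda$ is \emph{exact} on $F_\tau$, from which closedness is immediate. Fix the leaf $F_\tau$, work in the product coordinates of \cref{Set:Prod}, and for $X \in T_pF_\tau$ take the extension of $X$ to the vector field on $M$ that is constant in the $F$-direction; it is tangent to all leaves and satisfies $[\strphi\nu, X] = 0$, recalling $\strphi\nu = \frac{\del}{\del s}$. Then I expand
\[
	\strphi\lambda(X) = \strphi(\nabla_X k)(\nu,\nu) - (\nabla_{\strphi\nu} k)(X,\nu)
\]
with the Leibniz rule. The ingredients needed are: the Koszul formula applied to $g = \strphi^2\,\upd s^2 + g_s$ yields $\nabla_{\strphi\nu}\nu = -\grad^{F_\tau}\strphi$, which is tangent to the leaves; $g(\nabla_X\nu,\nu) = 0$; and the identities $g(\nabla_X\nu, Y) = -k(X,Y)$ and $\upd\log\strphi(X) = k(X,\nu)$ for $X,Y$ tangent to $F_\tau$ that are available here. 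The latter two say that, for $X$ tangent to $F_\tau$, $\nabla_X\nu$ is minus the leafwise $g_\tau$-dual of $k(X,\blank)$ while $\grad^{F_\tau}\strphi$ is $\strphi$ times the leafwise $g_\tau$-dual of $k(\blank,\nu)$; feeding everything in, all the terms involving $\nabla_X\nu$ and $\nabla_{\strphi\nu}\nu$ cancel out, and what survives is
\[
	\strphi\lambda(X) = \del_X\bigl(\strphi\, k(\nu,\nu)\bigr) - \del_{\strphi\nu}\bigl(k(X,\nu)\bigr).
\]

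Introducing the family of $1$-forms $\beta_s \coloneqq k(\blank,\nu)_{|TF_s}$ and $\dot\beta_\tau \coloneqq \frac{\upd}{\upd s}_{|s=\tau}\beta_s$, the last display reads $\strphi\lambda = \upd\bigl(\strphi\, k(\nu,\nu)\bigr) - \dot\beta_\tau$ on $F_\tau$ (restricting the function $\strphi\,k(\nu,\nu)$ to the leaf). The first summand is exact. For the second, the identity $\upd\log\strphi(X) = k(X,\nu)$ says precisely that $\beta_s = \upd(\log\strphi)$ on each leaf $F_s$; since differentiating in $s$ commutes with the leafwise exterior derivative of a smooth family of functions, $\dot\beta_\tau = \upd\bigl(\del_{\strphi\nu}\log\strphi\bigr)$ is exact on $F_\tau$ as well. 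Hence $\strphi\lambda = \upd\bigl(\strphi\,k(\nu,\nu) - \del_{\strphi\nu}\log\strphi\bigr)$ on $F_\tau$, which is exact, and in particular $\upd(\strphi\lambda) = 0$.

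The computation is elementary throughout; the one step that must come out exactly right is the pairwise cancellation of the $\nabla_X\nu$- and $\nabla_{\strphi\nu}\nu$-contributions, and it works precisely because of the two pointwise identities above — without $\upd\log\strphi(X) = k(X,\nu)$ the terms $\strphi\,k(\nabla_X\nu,\nu)$ and $k(X,\grad^{F_\tau}\strphi)$ would not match. A little care is also needed with signs in the Koszul computation of $\nabla_{\strphi\nu}\nu$ and with verifying $\del_{\strphi\nu}(k(X,\nu)) = \dot\beta_\tau(X)$ for the chosen extension of $X$, but neither is a genuine obstacle.
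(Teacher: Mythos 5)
Your argument is correct, and it reaches the conclusion by a route that is slightly but genuinely different from the paper's, yielding a mildly stronger result. Both proofs start from essentially the same intermediate formula: after applying the Leibniz rule, the Lie bracket relation $[\strphi\nu,X]=0$ and the two leafwise identities (equivalently, the paper's \eqref{eq:NablaNuNu} together with $\upd\log\strphi = k(\blank,\nu)_{|TF}$), one obtains $\strphi\lambda(X) = \del_X\bigl(\strphi k(\nu,\nu)\bigr) - \del_{\strphi\nu}\bigl(k(X,\nu)\bigr)$ --- the same expression the paper has before multiplying by $\strphi$. From this point the paths diverge. The paper differentiates once more to derive $\upd\lambda = -\upd\log\strphi\wedge\lambda$ and only then multiplies by $\strphi$ to obtain $\upd(\strphi\lambda)=0$. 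You instead recognize that this expression is already a sum of leafwise exact $1$-forms: the first term is $\upd^{F_\tau}\bigl(\strphi k(\nu,\nu)\bigr)$, and the second is $\dot\beta_\tau = \upd^{F_\tau}\bigl(\del_{\strphi\nu}\log\strphi\bigr)$ once you observe that $\beta_s = \upd^{F_s}\log\strphi$ on every leaf (which requires \cref{Prop:AddRigid} to hold on all leaves, not just $F_\tau$, and it does). This shows $\strphi\lambda$ is exact, not merely closed, and skips the second differentiation entirely. It is worth noting, as a small sanity check your argument makes transparent, that the exactness already follows from the paper's own intermediate formula by the commutation $\del_{\strphi\nu}\del_X\log\strphi = \del_X\del_{\strphi\nu}\log\strphi$ --- so the two proofs are fully compatible. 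Your cancellation step and the Koszul computation of $\nabla_{\strphi\nu}\nu = -\grad^{F_\tau}\strphi$ both check out.
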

In a certain way that we make precise in \cref{Sec:AppBianchi}, this lemma follows from the second Bianchi identity for some spacetime extension of $(M,g,k)$.
The shorter proof given here works entirely on the level of initial data sets.
\begin{proof}
	Let $X,Y \in \Gamma(TM)$ with $X \perp \nu$ and $Y \perp \nu$ everywhere and $[X,\frac{\del}{\del s}] = 0 = [Y, \frac{\del}{\del s}]$.
	It was explained in the proof of~\cref{Thm:ExParVF} how to construct for any vector in $TF_\tau$ such a vector field extending it.
	Then, with the help of $ g(\nabla_X \nu, \nabla_\nu \nu) = - k(\nabla_X \nu, \nu)$, which follows from~\eqref{eq:NablaNuNu},
	\begin{align*}
			\lambda(X) &= \nabla_X k(\nu,\nu) - \nabla_\nu k(X,\nu) \\
				&= \del_X k(\nu,\nu) - 2 k(\nabla_X \nu,\nu) - \del_\nu k(X,\nu) + k(\nabla_{\nu} X,\nu) + k(X,\nabla_{\nu} \nu) \\
				&= \del_X k(\nu,\nu) - 2 k(\nabla_X \nu, \nu) - \del_\nu k(X,\nu) + k(\nabla_X \nu, \nu) + k([\nu, X],\nu) - g(\nabla_X \nu, \nabla_\nu \nu) \\
				&= \del_X k(\nu,\nu)  - \del_\nu k(X,\nu) + k(\upd \log \strphi(X)\nu,\nu) \\
				&= \del_X k(\nu,\nu)  - \del_\nu \del_X \log \strphi + (\del_X \log \strphi)k(\nu,\nu)
	\end{align*}
	holds on $F_{\tau}$.
	Thus, using $\del_X \del_\nu \del_Y \log \strphi = \del_\nu \del_X \del_Y \log \strphi + \del_{[X,\nu]} \del_Y \log \strphi$ with $[X,\nu] = - (\del_X \log \strphi) \nu$, we get
	\begin{align*}
		\del_X (\lambda(Y)) &= \del_X\del_Y k(\nu,\nu) - \del_\nu \del_X \del_Y \log \strphi  + (\del_X \log \strphi) \del_\nu \del_Y \log \strphi \\
		&\phantom{=}\,+ (\del_X \del_Y \log \strphi)k(\nu,\nu) + (\del_Y \log \strphi)\del_X k(\nu,\nu).
	\end{align*}
	The general relation $\del_X \del_Y f - \del_Y \del_X f - \del_{[X,Y]} f = 0$ for all $f \in C^\infty(M)$ now leads to multiple cancellation in the second step of the following chain of equations:
	\begin{align*}
		\upd \lambda (X,Y) &= \del_X (\lambda(Y)) - \del_Y (\lambda(X)) - \lambda([X,Y]) \\
			&= (\del_X \log \strphi) \del_\nu \del_Y \log \strphi + (\del_Y \log \strphi) \del_X k(\nu, \nu) \\
			&\phantom{=}\;- (\del_Y \log \strphi) \del_\nu \del_X \log \strphi - (\del_X \log \strphi) \del_Y k(\nu, \nu) \\
			&= -(\upd \log \strphi \wedge \lambda)(X,Y).
	\end{align*}
	This immediately implies
	\begin{gather*}
		\upd (\strphi\lambda) = \upd \strphi \wedge \lambda - \strphi (\strphi^{-1} \upd \strphi \wedge \lambda) = 0. \qedhere
	\end{gather*}
\end{proof}

\Cref{Lem:2for3} implies that 2-for-3 holds (\ie either two imply the third) for the following three equations: $j_{|TF_{\tau}} = 0$, $\lambda = 0$ and the \emph{j-equation} $\div^{F_{\tau}}(\dot{g}_{\tau}) - \upd \tr^{F_{\tau}}(\dot{g}_{\tau}) = 0$.
In our setting, we know that $j_{|TF_t} = 0$, but this is not yet sufficient to conclude $\lambda = 0$.
We will make use of the closedness of $\strphi \lambda$ and the deformation theory for Ricci-flat metrics to conclude that $\lambda$ has to vanish in the case of interest, thereby completing the proof of \cref{Thm:ExParVF}.
As a preparation, we prove the following lemma.

\begin{Lem} \label{Lem:DivPartHodge}
	Let $(F,g)$ be a Ricci-flat Riemannian manifold and $W \in \Gamma(TF)$.
	Then
	\begin{gather*}
		\div(\Lie_W g) - \upd \tr(\Lie_W g) = -\delta \upd W^\flat,
	\end{gather*}
	where $\delta$ denotes the formal adjoint of $\upd$.
\end{Lem}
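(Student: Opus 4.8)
The plan is to expand both sides in local index notation and reduce everything to the Bochner--Weitzenböck formula on $1$-forms, using that the Ricci tensor of $(F,g)$ vanishes.

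First I would write $h \coloneqq \Lie_W g$, so that $h_{ij} = \nabla_i W_j + \nabla_j W_i$ with $W_i = W^\flat_i$, and $\tr(h) = g^{ij} h_{ij} = 2\div W$. Differentiating and contracting, $(\div h)_j = \nabla^i h_{ij} = \nabla^i \nabla_i W_j + \nabla^i \nabla_j W_i$. In the second term I would commute the two covariant derivatives via the Ricci identity; after tracing, the curvature contribution is exactly a Ricci contraction $\ric(W)_j$, which vanishes by hypothesis, leaving $\nabla^i \nabla_j W_i = \nabla_j(\div W)$. Hence $(\div h)_j = (\nabla^i\nabla_i W^\flat)_j + \nabla_j(\div W)$, and since $(\upd \tr h)_j = 2\nabla_j(\div W)$ this gives
\[
	\div(\Lie_W g) - \upd \tr(\Lie_W g) = \nabla^i\nabla_i W^\flat - \upd(\div W),
\]
where $\nabla^i\nabla_i$ denotes the (trace of the) connection Laplacian on $1$-forms.

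For the right-hand side I would recall two standard facts: the codifferential satisfies $\delta W^\flat = -\div W$, and the Bochner--Weitzenböck identity on $1$-forms reads $\upd\delta + \delta\upd = -\nabla^i\nabla_i + \ric$ (the last term acting through the musical isomorphisms). Ricci-flatness kills that term, so $\delta\upd W^\flat = -\nabla^i\nabla_i W^\flat - \upd\delta W^\flat = -\nabla^i\nabla_i W^\flat + \upd(\div W)$, i.e. $-\delta\upd W^\flat = \nabla^i\nabla_i W^\flat - \upd(\div W)$. Comparing with the displayed formula above finishes the proof.

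There is no genuine obstacle in the computation; the only thing requiring care is the bookkeeping of sign conventions --- the sign of $\div$ on symmetric $2$-tensors (fixed so as to be consistent with the momentum constraint $j$ in~\eqref{eq:CE}), the sign of $\delta$ as the formal adjoint of $\upd$, and the curvature sign in the Ricci identity. It is perhaps worth remarking that Ricci-flatness enters "in the same place" on both sides: the curvature term discarded when commuting derivatives on the left is precisely the Weitzenböck curvature term discarded on the right.
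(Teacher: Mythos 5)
Your proof is correct and essentially matches the paper's: both reduce to the Ricci identity for commuting covariant derivatives of $W^\flat$. The paper writes both sides in terms of the second covariant derivative $\nabla^2 W^\flat$ and identifies their difference directly as $2\ric(\blank,W)$, whereas you carry out the computation on each side separately and package the right-hand side via the Bochner--Weitzenböck formula; this is merely a re-organization of the same index manipulation.
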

\begin{proof}
	Throughout, let $X, Y \in \Gamma(TF)$ and $e_1, \ldots, e_{n-1}$ denote a local orthonormal frame of $F$.
	Then
	\begin{equation} \label{eq:Lie}
		\begin{aligned}
			(\Lie_W g)(X,Y) &= \del_W g(X,Y) - g([W, X],Y) -g(X,[W, Y]) \\
				&= g(\nabla_X W, Y) + g(X, \nabla_Y W) \\
				&= \nabla W^\flat(X,Y) + \nabla W^\flat(Y,X).
		\end{aligned}
	\end{equation}
	We thus get
	\begin{align*}
		(\div(\Lie_W g) - \upd \tr(\Lie_W g))(X) &= \sum_{i = 1}^{n-1} \left(\nabla^2 W^\flat(e_i,e_i,X) + \nabla^2 W^\flat(e_i,X,e_i) \right) \\
		&\phantom{=}\;- 2\sum_{i = 1}^{n-1} \nabla^2 W^\flat(X,e_i,e_i).
	\end{align*}
	On the other hand, since $\upd W^\flat(X,Y) = \nabla W^\flat(X,Y) - \nabla W^\flat(Y,X)$ and $-\delta \beta$, for $\beta \in \Omega^2(F)$, is the divergence of $\beta$ in the sense of a trace over the first two entries of $\nabla \beta$, we have
	\begin{align*}
		-\delta \upd W^\flat(X) &= \sum_{i = 1}^{n-1} \left(\nabla^2 W^\flat(e_i,e_i,X) - \nabla^2 W^\flat(e_i,X,e_i) \right).
	\end{align*}
	The difference is thus given by
	\begin{align*}
	(\div(\Lie_W g) - \upd \tr(\Lie_W g) + \delta \upd W^\flat)(X) &= 2\sum_{i = 1}^{n-1} \left(\nabla^2 W^\flat(e_i,X,e_i) - \nabla^2 W^\flat(X,e_i,e_i) \right) \\
		&= 2\ric(X,W) = 0. \qedhere
	\end{align*}
	
\end{proof}

\begin{Satz} \label{Thm:RicciFlatDefSatJEq}
	Assume \cref{Set:Prod} with the additional assumptions that $u(e_0 + \nu)$ is leafwise $\overline{\nabla}$-parallel for $u = \strphi^{-1}$, that all metrics $(g_s)_{s \in [0,\ell]}$ are Ricci-flat and that $j_{|TF_{\tau}} = 0$.
	Then $\lambda = 0$ for $\lambda$ defined by~\eqref{eq:DefLambda}.
\end{Satz}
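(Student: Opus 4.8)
The plan is to convert the Ricci-flatness of the leaves into information about \cref{Lem:2for3} via the deformation theory of scalar-flat metrics, and then to finish using the closedness statement of \cref{Lem:Closed} together with Hodge theory on the closed leaf $F_\tau$. Fix $F_\tau$ and abbreviate $\mathcal D(h) \coloneqq \div^{F_\tau} h - \upd\tr^{F_\tau} h$ for a symmetric $2$-tensor $h$ on $F_\tau$. Since $j_{|TF_\tau} = 0$, \cref{Lem:2for3} says $\strphi\lambda = -\tfrac12\mathcal D(\dot g_\tau)$, and \cref{Lem:Closed} says that this $1$-form is closed. As $\strphi > 0$, it therefore suffices to show that $\mathcal D(\dot g_\tau)$ is also co-exact: a $1$-form on the closed manifold $F_\tau$ that is at the same time closed and co-exact is harmonic, hence vanishes.

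To produce a co-exact form I would first split off a gauge part. By the Berger--Ebin decomposition of symmetric $2$-tensors on the closed Riemannian manifold $(F_\tau, g_\tau)$ we may write $\dot g_\tau = h_0 + \Lie_W g_\tau$ for some vector field $W \in \Gamma(TF_\tau)$ and some symmetric $2$-tensor $h_0$ with $\div^{F_\tau} h_0 = 0$. This splitting meshes with the two remaining ingredients: on the gauge part, \cref{Lem:DivPartHodge} applies to the Ricci-flat leaf $(F_\tau, g_\tau)$ and gives $\mathcal D(\Lie_W g_\tau) = -\delta\upd W^\flat$, which is co-exact; on the divergence-free part, $\mathcal D(h_0) = -\upd\tr^{F_\tau}(h_0)$, so it only remains to see that $\tr^{F_\tau}(h_0)$ is locally constant.

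This last point is where Ricci-flatness of the whole family $(g_s)_{s \in [0,\ell]}$ enters. Differentiating $\scal^{g_s} \equiv 0$ at $s = \tau$ shows that $\dot g_\tau$ lies in the kernel of the linearization of scalar curvature at $g_\tau$; since $\Lie_W g_\tau$ is an infinitesimal pullback and scalar curvature is natural, $\Lie_W g_\tau$ lies in that kernel as well (the linearized scalar curvature in this direction being $\del_W\scal^{g_\tau} = 0$), and hence so does $h_0 = \dot g_\tau - \Lie_W g_\tau$. Inserting $h_0$ into the standard formula for the linearized scalar curvature and using $\div^{F_\tau} h_0 = 0$ and $\Ric^{g_\tau} = 0$, all terms but one cancel and the equation collapses to $\triangle^{F_\tau}\tr^{F_\tau}(h_0) = 0$; on the closed leaf $F_\tau$ this forces $\tr^{F_\tau}(h_0)$ to be locally constant, so $\upd\tr^{F_\tau}(h_0) = 0$ and $\mathcal D(h_0) = 0$. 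Combining, $\mathcal D(\dot g_\tau) = \mathcal D(h_0) + \mathcal D(\Lie_W g_\tau) = -\delta\upd W^\flat$ is co-exact, and with closedness from \cref{Lem:Closed} it must vanish. Thus $\strphi\lambda = 0$ and therefore $\lambda = 0$.

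I expect the main obstacle to be exactly the step that identifies $\tr^{F_\tau}(h_0)$ as locally constant: the Ricci-flatness of the family must be exploited for the \emph{divergence-free part} $h_0$ rather than for $\dot g_\tau$ itself, which is precisely why the Berger--Ebin splitting and the preparatory \cref{Lem:DivPartHodge} are set up in the way they are. Everything else amounts to bookkeeping with the Hodge decomposition on $F_\tau$.
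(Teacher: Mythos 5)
Your proof is correct and takes a genuinely different, and in fact leaner, route than the paper. Both arguments split $\dot g_\tau$ into a divergence-free part and a gauge part $\Lie_W g_\tau$ and handle the gauge part via \cref{Lem:DivPartHodge}, but the key intermediate step differs. The paper first peels off the trace-average term $cg_\tau$, explicitly constructs an auxiliary family of Ricci-flat metrics (by volume normalization and deformation along the flow of $W$) whose derivative at $s=\tau$ is proportional to the remaining piece $h$, and then invokes Besse's Theorem~12.30 that infinitesimal Einstein deformations of Ricci-flat metrics are TT-tensors to conclude $\upd\tr h=0$; finally it Hodge-decomposes $W^\flat$ and uses the closedness from \cref{Lem:Closed} to kill the co-exact part. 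You instead observe directly that $h_0 = \dot g_\tau - \Lie_W g_\tau$ lies in the kernel of the linearized scalar curvature (since $\scal^{g_s}\equiv 0$ along the family and scalar curvature is diffeomorphism-natural), which with $\div^{F_\tau} h_0 = 0$ and $\ric^{g_\tau}=0$ collapses to $\triangle^{F_\tau}\tr^{F_\tau}(h_0)=0$, hence $\upd\tr^{F_\tau}(h_0)=0$ by compactness; you then conclude that $\mathcal D(\dot g_\tau)$ is simultaneously closed and co-exact, hence zero by $L^2$-orthogonality. This sidesteps both the construction of the auxiliary Ricci-flat family and the full strength of Besse's TT-theorem, replacing them with one elementary calculation with the linearized scalar curvature, and it avoids the finer Hodge decomposition of $W^\flat$. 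One small terminological slip at the end: a form that is closed and co-exact is not merely harmonic — harmonicity alone would not force vanishing — rather it is \emph{both} harmonic and co-exact, and these subspaces are $L^2$-orthogonal, which is what gives zero; the substance of the argument is fine.
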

\begin{proof}
	Combining \cref{Lem:2for3} and \cref{Lem:Closed} in the case $j_{|TF_{\tau}} = 0$ yields
	\begin{gather} \label{eq:ClosedCond}
		0 = \upd (\div^{F_{\tau}}(\dot{g}_{\tau}) - \upd \tr^{F_{\tau}}(\dot{g}_{\tau})),
	\end{gather}
	while we have to show that $\div^{F_{\tau}}(\dot{g}_{\tau}) - \upd \tr^{F_{\tau}}(\dot{g}_{\tau}) = 0$.
	We analyze these equations by decomposing the symmetric $2$-tensor $\dot{g}_{\tau}$.
	First, we subtract from $\dot{g}_{\tau}$ the term $cg_{\tau}$ with $c \coloneqq \frac{1}{(n-1)\vol(F_\tau,g_\tau)} \int_{F_\tau} \tr^{g_{\tau}}(\dot{g}_{\tau}) \dvol^{g_{\tau}}$, so that the mean value of the trace of $\dot{g}_{\tau} - cg_{\tau}$ vanishes.
	Secondly, we may choose a vector field $W \in \Gamma(F_\tau)$ such that $\div^{g_{\tau}}(\Lie_W g_{\tau}) = \div^{g_{\tau}}(\dot{g}_{\tau} - cg_{\tau})$.
	The difference term $h \coloneqq  \dot{g}_{\tau} - cg_{\tau} - \Lie_W g_{\tau}$ is divergence-free and also satisfies $\int_{F_\tau} \tr^{g_{\tau}}(h) \dvol^{g_{\tau}} = 0$ because
	\begin{gather*}
		\tr^{g_{\tau}}(\Lie_W g_{\tau})  = 2\div^{g_{\tau}}(W)
	\end{gather*}
	by~\eqref{eq:Lie} and this integrates to zero on a closed manifold.
	We observe that a non-zero multiple of $h$ is the derivative at $s = \tau$ of a family of Ricci-flat metrics $(\tilde{g}_s)_{s \in [0,\ell]}$ with $\tilde{g}_{s|s=\tau} = \vol(F_\tau,g_{\tau})^{-\frac{2}{n-1}} g_{\tau}$.
	In fact, such a family may be obtained by rescaling $(g_s)_{s \in [0,\ell]}$ to unit volume and suitably deforming the result with the flow of $W$.
	It follows that $h$ satisfies the \emph{linearized Einstein equation} $E^{\prime}_{\tilde{g}_{\tau}}(h) = 0$ and defines an \emph{infinitesimal Einstein deformation} of $\tilde{g}_{\tau}$ in the sense of \cite[Def.~12.29]{Besse:1987}.
	From \cite[Thm.~12.30]{Besse:1987} we get that $h$ is a TT-tensor, \ie is divergence- and trace-free (pointwise), with respect to $\tilde{g}_{\tau}$ and then also with respect to $g_{\tau}$.
	In particular, the j-equation $\div^{F_{\tau}}(h) - \upd \tr^{F_{\tau}} (h) = 0$ holds for $h$.
	Since it obviously also holds for $cg_{\tau}$, we are left to prove it for the longitudinal part $\Lie_W g_{\tau}$ of $\dot{g}_{\tau}$.
	
	In order to do so, we decompose $W$ using the Hodge decomposition $W^\flat = \upd f + \alpha + \delta\beta$ with $f \in C^\infty(F_\tau)$ and $\alpha \in \ker(\upd) \cap \ker(\delta) \subseteq \Omega^1(F_\tau)$ and $\beta \in \Omega^2(F_\tau)$.
	Due to \cref{Lem:DivPartHodge}, the exact and the harmonic\footnote{Actually, since harmonic forms on Ricci-flat manifolds are already parallel, this piece does not even affect $\Lie_W g_{\tau}$.} part satisfy the j-equation.
	We show that the co-exact part vanishes.
	For this, we observe that condition~\eqref{eq:ClosedCond}, which reduces to $-\upd \delta \upd \delta \beta = 0$, implies that 
	\begin{gather*}
		0 = \delta \upd \delta \upd \delta \beta = (\delta \upd + \upd \delta)(\delta \upd + \upd \delta) \delta \beta = \triangle^2 \delta \beta,
	\end{gather*}
	where $\triangle$ is the Hodge Laplacian.
	Hence $\delta \beta$ is harmonic, so $\delta \beta = 0$.
\end{proof}

\section{Local uniqueness of DEC extensions} \label{Sec:RigidSpacetime}
This section is devoted to the proofs of \cref{Thm:RigidSpacetime} and its corollaries.
The main step is proving the following theorem.

\begin{Satz} \label{Thm:ExParVFSpacetime}
	Let $(\overline{M}, \overline{g})$ be a time-oriented Lorentzian manifold subject to DEC and assume that the induced initial data set on an (embedded) spacelike hypersurface $M$ carries a future-lightlike $\overline{\nabla}$-parallel vector field $V = ue_0 - U \in \Gamma(\overline{T}M)$, $U \in \Gamma(TM)$.
	Suppose that the triple $(g,k,V)$ satisfies the following rigidity property:
	\begin{quote}
		\emph{For every $p \in M$ there is an open neighborhood $W \subsetneq M$ of $p$ such that every DEC initial data set $(g^\prime, k^\prime)$ coinciding with $(g,k)$ on $M \setminus W$ carries a future-lightlike $\overline{\nabla}$-parallel vector field $V^\prime$ coinciding with $V$ on $M \setminus W$.}
	\end{quote}
	Then there is an open neighborhood of $M$ in $\overline{M}$ onto which $V$ extends as parallel vector field.
\end{Satz}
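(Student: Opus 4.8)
The plan is to produce the extension of $V$ as the gradient of a solution to a wave equation, and then to propagate the vanishing of its Hessian away from $M$, the propagation being driven by the dominant energy condition in the manner of the conservation theorem \cite[Sec.~4.3]{Hawking.Ellis:1973}. I would begin by collecting the consequences of $V = ue_0 - U$ being $\overline{\nabla}$-parallel along $M$. Since $\overline{\nabla}$ is the restriction of the Levi-Civita connection of $\overline{g}$, parallelism of $V$ in tangential directions yields $\upd U^\flat = 0$ and $\overline{g}(R^{\overline{g}}(X,Y)V, Z) = 0$ for all $X, Y \in TM$ and $Z \in \overline{T}M$; a computation with the constraint equations (as in the spin-geometric setting \cite{Gloeckle:2023p}) shows that the dominant energy condition of $\overline{g}$ cannot be strict on $M$, so $\rho = |j|_g$ with $j^\sharp = \tfrac{\rho}{u}U$. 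Combining the marginal case with the dominant energy condition forces $\Ein^{\overline{g}}|_M = \phi\, V^\flat \otimes V^\flat$ for $\phi = \tfrac{\rho}{u^2} \geq 0$, hence $\Ric^{\overline{g}}(V,\blank)|_M = 0$, and, via the Gauss and Codazzi equations, determines the full curvature tensor of $\overline{g}$ along $M$ in terms of $(g,k)$ alone (essentially \cite[Lem.~3]{Gloeckle:2024_b}).

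The statement being local, I would then replace $\overline{M}$ by a globally hyperbolic neighborhood of (a portion of) $M$ on which $M$ is a Cauchy hypersurface. Using $\upd U^\flat = 0$, choose a local function $\psi_0$ on $M$ with $\upd \psi_0 = -U^\flat$, and let $\overline{\psi}$ solve the linear wave equation $\square_{\overline{g}}\overline{\psi} = 0$ (with $\square_{\overline{g}} = \tr_{\overline{g}} \Hess^{\overline{g}}$) with Cauchy data $\overline{\psi}|_M = \psi_0$ and $\del_{e_0}\overline{\psi}|_M = -u$; set $\overline{V} \coloneqq (\upd\overline{\psi})^\sharp$, so that $\overline{V}|_M = V$ by construction. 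A direct computation with the Weingarten equation, \eqref{eq:OlPar} and $\square_{\overline{g}}\overline{\psi} = 0$ shows that $S \coloneqq \Hess^{\overline{g}}\overline{\psi}$ vanishes on $M$; in particular $\overline{\nabla}_{e_0}S|_M = 0$ as well. It then remains to show $S \equiv 0$ on a neighborhood of $M$, since this is equivalent to $\overline{\nabla}\,\overline{V} = 0$ there.

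By commuting two covariant derivatives past $\square_{\overline{g}}\overline{\psi} = 0$ and invoking the second Bianchi identity, $S$ is seen to obey a linear wave equation of the schematic form $\square_{\overline{g}}S = R^{\overline{g}} \ast S + (\overline{\nabla}\Ric^{\overline{g}}) \ast \upd\overline{\psi}$. Were the last, source, term absent, uniqueness for linear wave equations on globally hyperbolic regions with vanishing Cauchy data would give $S \equiv 0$ immediately. To handle the source I would use the dominant energy condition together with the rigidity hypothesis: the curvature data recorded above ($\Ein^{\overline{g}}|_M = \phi\, V^\flat \otimes V^\flat$, $\Ric^{\overline{g}}(V,\blank)|_M = 0$) fixes the matter curvature and its tangential derivatives along $M$, while the a priori undetermined normal derivatives are constrained by applying the rigidity property to initial data sets $(g',k')$ agreeing with $(g,k)$ outside arbitrarily small sets and re-running the algebraic analysis for the associated parallel fields $V'$ — the analogue for parallel null vector fields of the rigidity mechanism of \cref{Thm:RicciFlatDefSatJEq}. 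A dominant-energy estimate along the normal geodesic congruence issuing from $M$ should then propagate $\Ric^{\overline{g}}(\overline{V},\blank) = 0$ — equivalently, that $\Ein^{\overline{g}}$ stays of null rank one with null direction $\overline{V}$ — to a full neighborhood of $M$, after which the source term becomes expressible through $S$ and $\overline{\nabla}S$; the wave equation for $S$ is thereby homogeneous and $S \equiv 0$, as desired.

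The main obstacle is exactly this last step: controlling the curvature source in the evolution of $\Hess^{\overline{g}}\overline{\psi}$, that is, showing that the null-rank-one structure of the Einstein tensor — and with it the parallel null direction — propagates off $M$. Both ingredients are indispensable here: the dominant energy condition supplies the sign needed for an energy argument, and the rigidity hypothesis supplies the precise form of the curvature along $M$. Everything else — the computation that $S|_M = 0$, the derivation of the wave equation for $S$, and the contraction-of-Bianchi bookkeeping — is routine, if somewhat lengthy.
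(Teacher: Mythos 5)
The paper's proof is purely geometric and makes no use of PDE theory: it perturbs the hypersurface $M$ inside $\overline{M}$ by graphs of small functions, applies the rigidity hypothesis to the induced initial data sets on these perturbed hypersurfaces (which are genuine DEC initial data sets agreeing with $(g,k)$ outside a small set), and thereby produces $\overline{\nabla}$-parallel vector fields along a family of hypersurfaces sweeping out a neighborhood of $M$. The technical core is a careful well-definedness argument (via partitions of unity and convex combinations of graph functions) showing that different perturbed hypersurfaces through the same point yield the same vector, after which smoothness and $\overline{\nabla}$-parallelism of the glued field are elementary. Your proposal takes a genuinely different route, attempting to produce the extension as the gradient of a solution $\overline{\psi}$ to a scalar wave equation and to kill $S = \Hess^{\overline{g}}\overline{\psi}$ by a linear wave argument.

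The gap in your approach lies precisely where you flag a difficulty, and I believe it is not merely a technical hurdle but a missing idea. The wave equation you derive for $S$ has a source term of the form $(\overline{\nabla}\Ric^{\overline{g}})\ast \upd\overline{\psi}$, and you propose to neutralize it by propagating the null-rank-one structure $\Ein^{\overline{g}} = \phi\,\overline{V}^\flat\otimes\overline{V}^\flat$ off $M$ via a DEC energy estimate, in the spirit of the conservation theorem. But the conservation theorem of Hawking and Ellis propagates the vanishing of the \emph{full} Einstein tensor, not the vanishing of a single contraction $\Ein^{\overline{g}}(\overline{V},\blank)$ or an algebraic rank condition; its proof hinges on $\div^{\overline{g}}\Ein^{\overline{g}} = 0$ combined with DEC to bound an energy integrand from one side, and this mechanism does not obviously extend to the constraint you need. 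Moreover, the way you invoke the rigidity hypothesis---to pin down normal derivatives of the curvature along $M$---conflates a statement about abstract initial data sets with a statement about the given spacetime $(\overline{M},\overline{g})$: rigidity tells you that nearby initial data sets carry $\overline{\nabla}$-parallel vector fields, but it does not directly furnish information about $\overline{\nabla}\Ric^{\overline{g}}$ along $M$ without first relating those abstract data sets back to hypersurfaces inside $\overline{M}$, which is exactly the move the paper makes and your proposal avoids. Absent a concrete argument closing this loop, the claim that the source term becomes controllable, and hence that $S\equiv 0$, is unjustified.

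A secondary, smaller concern: $U^\flat$ need only be closed, so $\psi_0$ exists only locally or after passing to a cover, and you would need to check that the resulting local parallel extensions glue; the paper handles the analogous gluing step (uniqueness of parallel extensions) in a single line, but in your setup the local extensions are defined via solutions of an equation depending on the choice of $\psi_0$, so the compatibility is not automatic until after one has established that $\upd\overline{\psi} = \overline{V}^\flat$ is independent of the local potential chosen.
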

\begin{Bem}
	The rigidity assumption in \cref{Thm:ExParVFSpacetime} is everything we need and is seemingly weaker than the one of \cref{Thm:RigidSpacetime}:
	Simply take some point $q \in M \setminus \{p\}$ and set $\tilde{W} = M \setminus \{q\}$ in the definition of “locally rigid”.
	In fact, however, they are equivalent.
	For the converse direction, choose some $W_0$ that fulfills the assumptions imposed on $W$ in \cref{Thm:ExParVFSpacetime}.
	Now given some open neighborhood $\tilde{W}$ of $p$ and choose $W \subseteq W_0 \cap \tilde{W}$ so small that $\Int(M \setminus W)$ is connected and with the property that $M \setminus W = \Clos(\Int(M \setminus W))$ (which is both true for small balls since we assume $\dim(M) = n \geq 2$).
	It is almost immediate to see that this $W$ does the job.
	The only crucial point to note is that while a priori the $\overline{\nabla}$-parallel vector fields $V^\prime$ and $V$ only have to coincide on $M \setminus W_0 \neq \emptyset$, they do so on all of $M \setminus W$ due to continuity and connectedness.
\end{Bem}
\begin{proof}[Proof of \cref{Thm:RigidSpacetime}]
	Let $\overline{\nabla}$ be the Levi-Civita connection of $(\overline{M}, \overline{g})$.
	From \cref{Thm:ExParVFSpacetime}, we get an extension of $V$ on a neighborhood of $M$ in $\overline{M}$ that is future-lightlike and parallel with respect to $\overline{\nabla}$.
	We denote the extension by $V$ as well and consider the flow of $V$.
	We may assume that the neighborhood of $M$ is tubular, so that the complement of $M$ decomposes into future and past components.
	Since $V$ lightlike and thus transversal to the spacelike hypersurface $M$, it defines a local diffeomorphism $\Phi$ from an open neighborhood of $\{0\} \times M$ in $\R \times M$ to an open neighborhood of $M$ in $\overline{M}$.
	We assume that $\Phi$ is chosen such that its domain is of the form $\bigcup_{p \in M} (I_p \times \{p\})$, where $I_p \subseteq \R$ is an open interval containing $0$ for each $p \in M$. 
	Restricting the codomain we may assume surjectivity of $\Phi$.
	Since flow lines of $V$ must cross $M$ from past to future, they can hit $M$ at most once and thus $\Phi$ is injective.
	So $\Phi$ is a diffeomorphism with $\upd \Phi(\frac{\del}{\del v}) = V$, where $v$ denotes the $\R$-coordinate of $\R \times M$.
	Along $\{0\} \times M$, the pullback metric is given by
	\begin{gather*}
	\Phi^*\overline{g} = \pi^{TM}(V)^\flat \otimes \upd v + \upd v \otimes \pi^{TM}(V)^\flat + g,
	\end{gather*}
	where $\pi^{TM}$ denotes the orthogonal projection on $TM$.
	Since $V$ is parallel and thus in particular a Killing vector field, the formula for $\Phi^*\overline{g}$ holds on the whole domain of $\Phi$.
\end{proof}
\begin{proof}[Proof of \cref{Thm:ExParVFSpacetime}]
	Let us first of all recall the canonical isomorphism $\overline{T}M \cong T\overline{M}_{|M}$ of vector bundles with metric and connection from the introduction.
	It follows that $V$ defines section of $T\overline{M} \to \overline{M}$ along $M \subseteq \overline{M}$ that is future-lightlike and parallel.
	We wish to construct a parallel extension to an open neighborhood of $M$ that we also denoted by $V$.
	The idea is now to slightly perturb the hypersurface $M$ in $\overline{M}$ and use the rigidity property to also find a lightlike and $\overline{\nabla}$-parallel vector field $V$ along the perturbed hypersurface.
	It then requires some technical work to see that these vector fields along the hypersurfaces are just the restrictions of a single vector field $V$ defined on a neighborhood of $M$ in $\overline{M}$, \ie that different (spacelike) hypersurfaces passing through the same point lead to the same vector in that point.
	
	Consider a point $p \in M$.
	Let $T$ be a future-timelike vector field defined on an open neighborhood of $p$ in $\overline{M}$.
	There is an $\epsilon > 0$ and a small compact neighborhood $K$ of $p$ in $M$ such that the flow of $T$ is defined on $(-\epsilon, \epsilon) \times K$.
	We equip $(-\epsilon, \epsilon) \times K$ with the metric obtained by pulling back along the flow.
	Since $K$ is compact, after possibly making $\epsilon$ smaller, we may assume that there exists some $C > 0$ such that for all $(t,q) \in (-\epsilon,\epsilon) \times K$, $\alpha \in \R$ and $X \in T_qM$ with $|\alpha| < C|X|_g$ the vector $\alpha \frac{\del}{\del t} + X \in T_{(t,q)}((-\epsilon, \epsilon) \times K)$ is spacelike.
	Consequently, any smooth function $f \colon K  \to (\-\epsilon, \epsilon)$ with $|\upd f|_g < C$ defines a spacelike hypersurface $\Graph(f) = \set{(f(q),q)}{q \in K}$ of $(-\epsilon,\epsilon) \times K$.
	
	We may assume that $K$ was chosen small enough so that it is contained in a neighborhood $W \subsetneq M$ around $p$ with the properties mentioned in the assumption.
	We now identify $(-\epsilon, \epsilon) \times K$ with its diffeomorphic image in $\overline{M}$ and define the vector field $V$ on a neighborhood of $p$ in $(-\epsilon, \epsilon) \times K$ as follows.
	For any compactly supported function $f \in C^\infty_c(K)$ with $|f| < \epsilon$ and $|\upd f|_g < C$ we consider the spacelike hypersurface $\Graph(f)$ extended by $M \setminus K$.
	This is canonically diffeomorphic to $M$ and the induced initial data set obviously coincides outside of $K$.
	The rigidity property allows to extend $V_{|M \setminus K}$ to $\Graph(f)$ such that it is a future-lightlike $\overline{\nabla}$-parallel hypersurface vector field.
	Since the set of \emph{admissible} functions, \ie functions $f \in C^\infty_c(K)$ with $|f| < \epsilon$ and $|\upd f|_g < C$, is star-shaped with respect to the zero function, this procedure allows to define $V$ on a neighborhood of $p$ -- once we have seen that the obtained vector $V$ at a point $(t,q)$ does not depend on the function $f$ with $f(q) = t$ that is used.
	
	To show this independence of $f$, we observe the following.
	Since the vector field $V$ along $\Graph(f)$ is $\overline{\nabla}$-parallel, its value at $(f(q), q)$ may be obtained via parallel transport along a curve in $\Graph(f)$ starting outside of $\supp(f)$ and ending at $(q,f(q))$.
	In particular, the vector $V_{(f(q),q)}$ is the same for two functions $f$ coinciding on a curve from $M \setminus K$ to $q$.
	Thus it would suffice to show that for any admissible $f_1$ and $f_2$ with $f_1(q) = f_2(q)$ there is an admissible function $f$ that coincides with each of them on such a curve.
	
	Locally around $q$ such a function can be constructed because in the model situation around $0$ in $\R^n$ a solution is given by $f_0(x_1, \ldots, x_n) = f_2(x_1, \ldots, x_n) + f_1(x_1, 0, \ldots, 0) - f_2(x_1,0, \ldots, 0)$.
	Note that $f_0 = f_1$ on $\R \times \{0\}$ and $f_0 = f_2$ on $\{0\} \times \R^{n-1}$.
	If the coordinates are chosen orthogonally in $q$ and such that $\frac{\del}{\del x_i} \in \ker \upd_q f_2$ for all $i = 2, \ldots, n$, then
	\begin{gather} \label{eq:EstFirstDer}
	|\upd_q f_0|_g \leq |\upd_q f_1|_g \leq \max \{|\upd_q f_1|_g, |\upd_q f_2|_g\}.
	\end{gather}
	Moreover, there is an estimate $\left|\frac{\del}{\del x_j}\frac{\del}{\del x_k} f_0 \right| \leq 3\max \{\|f_1\|_{C^2,x}, \|f_2\|_{C^2,x}\}$ for all $j,k = 1, \ldots n$, where the $C^2$-norm is the one induced by the coordinate system $x=(x_1,\ldots,x_n)$ around $q$.
	Let $C_0$ satisfy $\max_{q^\prime \in K, i =1,2} |\upd_{q^\prime} f_i|_g < C_0 < C$.
	Together with \eqref{eq:EstFirstDer} it follows that the function $f_0$ satisfies $|f_0| < \epsilon$ and $|\upd f_0|_g \leq C_0$ in a small open neighborhood $U_0$ around $q$, whose size may be determined from $f_1(q)$, $C_0$, $\max \{|\upd_q f_1|_g, |\upd_q f_2|_g\}$ and $3\max \{\|f_1\|_{C^2,x}, \|f_2\|_{C^2,x}\}$ alone.
	
	Now, we choose two paths $\gamma_1$ and $\gamma_2$ in $M$ connecting $q$ with $M \setminus K$.
	We suppose that they only intersect in $q$ and that in the chosen coordinates around $q$ they lie within $\R \times \{0\}$ and $\{0\} \times \R^{n-1}$, respectively.
	Let $(\chi_0, \chi_1, \chi_2)$ be a partition of unity subordinate to the open cover $(U_0, K \setminus \im(\gamma_2), K \setminus \im(\gamma_1))$ of $K$.
	By construction, $f \coloneqq \sum_{i=0}^2 \chi_i f_i \in C^\infty_c(K)$ satisfies $|f| < \epsilon$ and coincides with $f_i$ along $\gamma_i$ for $i = 1,2$.
	Noting that $\upd \chi_2 = -\upd \chi_0 - \upd \chi_1$ and $|f_0-f_2| \leq \|f_1-f_2\|_\infty$, we can estimate
	\begin{align*}
	|\upd f|_g &\leq |\upd \chi_0 (f_0-f_2)|_g + |\upd \chi_1 (f_1-f_2)|_g + \sum_{i_0}^2 \chi_i |\upd f_i|_g \\
	&\leq (|\upd \chi_0|_g + |\upd \chi_1|_g)\|f_1-f_2\|_\infty + C_0.
	\end{align*}
	Hence there is some $\delta > 0$ such that if $\|f_1-f_2\|_\infty < \delta$, then the constructed function $f$ is admissible.
	So in the case where $f_1$ and $f_2$ are close enough, we are done.
	
	In the general case, let $N \in \N$ satisfy $\|f_1-f_2\|_\infty < \delta N$.
	Considering the convex combinations $f_{1,j} = \frac{N-j}{N}f_1 + \frac{j}{N} f_2$ and $f_{2,j} = \frac{N-j-1}{N}f_1 + \frac{j+1}{N} f_2$ for $j = 0, \ldots, N-1$ instead of $f_1$ and $f_2$, respectively, we run the same construction as above with precisely the same choice of coordinates $x$ around $q$, neighborhood $U_0$ and partition of unity $(\chi_i)_{i=0,1,2}$.
	Notice that the function $f_{0,j}$ constructed in the first step still satisfies $|f_{0,j}| < \epsilon$ and $|\upd f_{0,j}|_g \leq C_0$ on $U_0$.
	This is due to the fact that the listed constants determining the size of $U_0$ play the same role for $f_{0,j}$ as they did for $f_0$.
	For instance, \eqref{eq:EstFirstDer} can be replaced with
	\begin{align*}
	|\upd_q f_{0,j}|_g^2 &= \left|\upd_q f_{1,j} \left(\frac{\del}{\del x_1} \right) \right|^2 + \sum_{i=2}^n \left|\upd_q f_{2,j} \left(\frac{\del}{\del x_i} \right) \right|^2 \\
	&\leq \left|\upd_q f_{1,j} \left(\frac{\del}{\del x_1} \right) \right|^2 + \sum_{i=2}^n \left|\upd_q f_{1,j} \left(\frac{\del}{\del x_i} \right) \right|^2 \\
	&= |\upd_q f_{1,j}|_g^2 \leq (\max \{|\upd_q f_1|_g, |\upd_q f_2|_g\})^2,
	\end{align*}
	where we used in the second step $\upd_q f_{2,j} \left(\frac{\del}{\del x_i} \right) = \frac{N-j-1}{N} \upd_q f_1 \left(\frac{\del}{\del x_i} \right)$ and $\upd_q f_{1,j} \left(\frac{\del}{\del x_i} \right) = \frac{N-j}{N} \upd_q f_1 \left(\frac{\del}{\del x_i} \right)$ for all $i=2, \ldots, n$.
	Now since $\|f_{1,j}-f_{2,j}\|_\infty < \delta$, we obtain an admissible function $f_j$ coinciding with $f_{1,j}$ along $\gamma_1$ and with $f_{2,j}$ along $\gamma_2$.
	Thus for all $j = 0, \ldots, N-1$ at $(f_1(q),q)$ the vector field $V$ is the same for $f_{1,j}$ and $f_{2,j}= f_{1,j+1}$.
	It follows that it is the same for the functions $f_1$ and $f_2$ we started with.
	
	Thus the above procedure gives a well-defined future-lightlike vector field $V$ locally around $p$, which  on $M$ coincides with the previously defined vector field.
	It is smooth since a smooth variation of the spacelike hypersurfaces leads to a smooth variation of the $\overline{\nabla}$-parallel vector field by ODE-theory.
	To see that $V$ is $\overline{\nabla}$-parallel, we just observe that in each point it is $\overline{\nabla}$-parallel in the directions tangent to a hypersurface $\Graph(f)$ (for an admissible $f$) passing through that point and that the tangent directions of such hypersurfaces collectively span its whole tangent space in $\overline{M}$.
	Finally, these local extensions of $V$ glue together to a lightlike and $\overline{\nabla}$-parallel vector field defined on a sufficiently small open neighborhood of $M$ in $\overline{M}$ since a $\overline{\nabla}$-parallel extension is necessarily unique.
\end{proof}

\begin{Bem}
	Actually, it is not important in the proof of \cref{Thm:ExParVFSpacetime} that the $\overline{\nabla}$-parallel vector field $V$ is future-lightlike.
	Also, the proof of \cref{Thm:RigidSpacetime} can be adapted to work for more general $V$ as long as $V$ is transversal to $M$.
	The main reason for restricting to the lightlike case was that all our examples of rigid initial data sets have lightlike $V$.
	Moreover, we do not expect much gain from formulating in greater generality. 
	At least in the causal case, where $V$ is guaranteed to be transversal to $M$, this can be seen as follows.
	First of all, the initial data set has to satisfy $\rho = |j|_g$ everywhere.
	Otherwise, there are many different local DEC spacetime extensions of the region where $\rho > |j|$, not all of them carrying a lightlike parallel vector field, contradicting the rigidity of $V$.
	Now assume that the initial data set extends to a DEC spacetime with $\overline{\nabla}$-parallel future-causal vector field $V$.
	By \cite[Lem.~3]{Gloeckle:2024_b}, its Einstein curvature has to be given by $\frac{1}{\rho}(\rho e_0^\flat - j) \otimes (\rho e_0^\flat - j)$ in all points of $M$ with $\rho \neq 0$.
	But now in all these points we have
	\begin{gather*}
		\frac{1}{\rho}\overline{g}(\rho e_0 - j^\sharp, V) \cdot (\rho e_0^\flat - j) = \ric^{\overline{g}}(V,\blank) - \frac{1}{2} \scal^{\overline{g}} \overline{g}(V,\blank) = -\frac{1}{2} \scal^{\overline{g}} \cdot V^\flat.
	\end{gather*}
	This can only be satisfied if $V$ is a multiple of the lightlike vector $\rho e_0 - j^\sharp$ (and $\scal^{\overline{g}} = 0$).
	Thus if $V$ is timelike, then we have to have $\rho \equiv 0$ and the spacetime has to be a vacuum spacetime.
	In this case -- as explained in the introduction -- local geometric uniqueness is already known as a consequence of the solution of the Cauchy problem for the vacuum Einstein equations and the conservation theorem of Hawking and Ellis.	
\end{Bem}

We are now in the position to prove the corollaries.
We start with \cref{Cor:LocGeomUniqSpin,Cor:LocGeomUniqMOTS}, which are very similar and deal with the situation of a band $[0,\ell] \times  F$ foliated by MOTS.
Afterwards, we consider closed manifolds -- mapping tori over $F$ -- in \cref{Cor:LocGeomUniqII}, which was not mentioned in the introduction.
Finally, we turn towards the (non-compact) asymptotically flat setting of \cref{Cor:LocGeomUniqAsympt}. 

\begin{proof}[Proof of \cref{Cor:LocGeomUniqSpin}]
	By \cref{Thm:RigidSpin}, $(M,g,k)$ carries a lightlike $\overline{\nabla}$-parallel spinor $\phi \in \Gamma(\overline{\Sigma}M)$.
	As explained in the introduction (\cf \cite{Gloeckle:2023p} for more details), it has an associated Dirac current $V_\phi \in \Gamma(\overline{T}M)$ that is future-lightlike and $\overline{\nabla}$-parallel.
	We fix a point $q \in \del_- M$ and consider the normalization $V = \frac{1}{u_{\phi}(q)} V_\phi$, where $V_\phi = u_\phi e_0 - U_\phi$.  
	We will then have $V = e_0 + \nu$ in the point $q$.
	
	We wish to apply \cref{Thm:RigidSpacetime} to $(g_{|M_0}, k_{|M_0}, V_{|M_0})$.
	In order to do so, we need to check the local rigidity property of the triple.
	Given $p \in M_0$ and a neighborhood $\tilde{W} \subseteq M$ of $p$, choose a neighborhood $W \subseteq \tilde{W}$ of $p$ so small that the closure of $W$ in $M$ is disjoint from $\del M$, the open subset $\Int(M \setminus W)$ is connected and $M \setminus W = \Clos(\Int(M \setminus W))$.
	Let $(g^\prime, k^\prime)$ be any DEC initial data set on $M_0$ coinciding with $(g_{|M_0}, k_{|M_0})$ on $M_0 \setminus W$.
	By the first condition on $W$, the initial data set $(g^\prime,k^\prime)$ extends uniquely by continuity to a DEC initial data set on $M$ that coincides with $(g,k)$ on $M \setminus W$.
	Now note that the assumptions of \cref{Thm:RigidSpin} are satisfied for this initial data set:
	It satisfies DEC and the future null expansion scalar $\theta^+$ along the boundary coincides with the one of $(g,k)$ because the initial data sets are the same on $\del M$.
	Since the conditions of \cref{Thm:RigidSpin} were assumed to hold for $(M,g,k)$, the null expansion scalar $\theta^+$ has the required sign and all the needed topological conditions on $M$ are also satisfied. 
	Thus the theorem is applicable and the same construction as above yields a future-lightlike $\overline{\nabla}$-parallel vector field $V^\prime$ on $M$ with $V^\prime = e_0 + \nu$ in $q$.
	It should be pointed out that the connection $\overline{\nabla}$ used here corresponds to $(g^\prime,k^\prime)$, but on $M \setminus W$ it coincides with the one of $(g,k)$, so there is no ambiguity in using the symbol $\overline{\nabla}$ there.
	Since $\Int(M \setminus W)$ is connected and $\overline{\nabla}$-parallel vector fields are uniquely determined along a smooth path by the vector at a single point on the path, $V$ and $V^\prime$ do not only coincide in $q$ but on all of $\Int(M \setminus W)$ and hence also on the closure $M \setminus W$ by continuity.
	Thus the rigidity property is satisfied.
	
	Now, \cref{Thm:RigidSpacetime} yields that both $\overline{M}_1$ and $\overline{M}_2$ contain open neighborhoods of $M_0$ that isometrically embed (with codimension $0$) into the same Killing development, and the embeddings are the same on $M_0$ and both preserve the time-orientation.
	We finish the proof by remarking that this Killing development indeed satisfies the DEC by \cref{Prop:KDSatDEC}.
\end{proof}

\begin{proof}[Proof of \cref{Cor:LocGeomUniqMOTS}]
	The combination of \cref{Thm:RigidMOTS} with \cref{Thm:ExParVF} yields a lightlike $\overline{\nabla}$-parallel vector field $V$ in the context of \cref{Thm:RigidMOTS} that can be normalized so that $V = e_0 + \nu$ in a previously fixed point $q \in \del_-M$.
	The rest of the proof can be taken verbatim from the one of \cref{Cor:LocGeomUniqSpin}.
\end{proof}

\begin{Kor} \label{Cor:LocGeomUniqII}
	Let $(g,k)$ be a DEC initial data set on a closed manifold $M$ and suppose that it contains a connected hypersurface $F$ that is a MOTS with respect to some unit normal $\nu$.
	Assume furthermore that either
	\begin{itemize}
		\item $M$ is spin and $\hat{A}(F) \neq 0$, or
		\item the manifold with boundary obtained by cutting $M$ along $F$ satisfies the homotopy condition with respect to the boundary piece (diffeomorphic to $F$) where $\nu$ is inward-pointing and in addition $F$ satisfies the cohomology condition.
	\end{itemize}
	Then there is a DEC spacetime extension of $(M,g,k)$ and this extension is locally geometrically unique.
\end{Kor}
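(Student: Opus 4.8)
The plan is to cut $M$ along $F$, reduce to the band situation treated in \cref{Cor:LocGeomUniqSpin,Cor:LocGeomUniqMOTS}, recover a lightlike $\overline{\nabla}$-parallel vector field, and feed it into \cref{Thm:RigidSpacetime}. First I would observe that $F$ cannot separate $M$: otherwise cutting produces a piece whose only boundary component is $F$ with $\nu$ inward-pointing, so that applying \cref{Thm:RigidSpin} respectively \cref{Thm:RigidMOTS} to it would force a foliation of the form $[0,\ell] \times \emptyset$, which is impossible. Hence $F$ is two-sided and non-separating, $M \setminus F$ is connected, and cutting yields a connected compact manifold with boundary $\widehat{M}$ with $\del\widehat{M} = \del_+\widehat{M} \dotcup \del_-\widehat{M}$, both pieces diffeomorphic to $F$, the sign indicating whether the lifted normal points inward or outward. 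The lifted initial data still satisfies the DEC, and $\theta^+ \equiv 0$ on $\del\widehat{M}$ since $F$ is a MOTS, so the trapping sign conditions hold marginally. In the spin case $\widehat{M}$ carries the spin structure restricting on both boundary copies to the one $F$ inherits from $M$, with $\hat{A}(\del_+\widehat{M}) = \hat{A}(F) \neq 0$; in the non-spin case the homotopy and cohomology conditions are precisely our hypotheses. Thus \cref{Thm:RigidSpin}, respectively \cref{Thm:RigidMOTS} together with \cref{Thm:ExParVF}, applies and yields the canonical foliation of $\widehat{M}$ by Ricci-flat MOTS $(F_s)_{s \in [0,\ell]}$ with $F_0 = \del_+\widehat{M}$ and $F_\ell = \del_-\widehat{M}$, together with a future-lightlike $\overline{\nabla}$-parallel vector field $V = u(e_0 + \nu)$ on $\widehat{M}$ for a positive $u \in C^\infty(\widehat{M})$ (the Dirac current of the parallel spinor in the spin case, the field from \cref{Thm:ExParVF} otherwise).

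The decisive step is to descend $V$ to the closed manifold $M$. The interior of $\widehat{M}$ is canonically diffeomorphic to $M \setminus F$; the leafwise normal $\nu$ and the leaves descend there and, I claim, extend smoothly across $F$ to the given normal, so that $M$ is a smooth mapping torus over the Ricci-flat MOTS $F$ and $e_0 + \nu$ is a globally defined section of $\overline{T}M$. By \eqref{eq:ParVsK} together with the identity $\overline{\nabla}_\nu(e_0+\nu) = k(\nu,\nu)(e_0+\nu)$ obtained in the proof of \cref{Thm:ExParVF}, one gets $\upd \log u = -k(\,\cdot\,,\nu)$ on all of $M \setminus F$; hence $u$ is pinned down up to a global constant along the connected set $M \setminus F$, and $V$ descends to a future-lightlike vector field on $M$ exactly when the two boundary values $u|_{\del_+\widehat{M}}$ and $u|_{\del_-\widehat{M}}$ agree under the gluing, equivalently when the period of the closed $1$-form $k(\,\cdot\,,\nu)$ around the circle direction of the mapping torus vanishes. \textbf{This vanishing is the main obstacle.} I would try to establish it from the requirement that the mapping-torus fibration be compatible with the metric and connection data carried over from $M$ -- equivalently, that the Killing development of $\widehat{M}$ glues up to a smooth Lorentzian manifold containing the closed $M$; granting it, $\overline{\nabla}V = 0$ extends from $M \setminus F$ to $M$ by density, and $V$ is the desired lightlike $\overline{\nabla}$-parallel vector field on $M$.

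Given $V$ on $M$, the rest runs as for \cref{Cor:LocGeomUniqSpin,Cor:LocGeomUniqMOTS}. For existence, the Killing development \eqref{eq:KD} with $U = u\nu$ is a DEC spacetime extension of $(M,g,k)$ by \cref{Prop:KDSatDEC}. For local geometric uniqueness I would check that $(g,k,V)$ restricted to $M \setminus F$ is locally rigid: for $p \in M \setminus F$ pick $W$ with closure disjoint from $F$, with $\Int(M \setminus W)$ connected and $M \setminus W = \Clos(\Int(M \setminus W))$; a DEC perturbation of $(g,k)$ supported in $W$ leaves $F$ a MOTS and all topological hypotheses untouched, so the first two steps reproduce a compatible future-lightlike $\overline{\nabla}$-parallel $V'$ agreeing with $V$ on $M \setminus W$ after normalizing both at one point of $M \setminus (F \cup W)$. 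Applying \cref{Thm:RigidSpacetime} to $(M \setminus F, g, k, V)$ then shows that any two DEC spacetime extensions of $(M,g,k)$ embed isometrically, time-orientation preservingly and fixing $M \setminus F$, into the common Killing development on neighborhoods of $M \setminus F$; these isometries extend across the codimension-two submanifold $F$ by continuity, giving the statement on neighborhoods of all of $M$. Apart from the period vanishing of the second step, everything is a recombination of \cref{Thm:RigidSpin,Thm:RigidMOTS,Thm:ExParVF,Thm:RigidSpacetime} and \cref{Prop:KDSatDEC}.
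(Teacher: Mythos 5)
The proposal correctly identifies the cut-and-reglue strategy and also makes a nice observation (not explicit in the paper) that the hypotheses force $F$ to be non-separating. But there is a genuine gap at the step you yourself flag as ``the main obstacle'', and it is not fixable along the line you sketch. The period of $\upd\log u = -k(\blank,\nu)$ around the circle direction of the mapping torus need \emph{not} vanish, so the lightlike parallel vector field on the cut manifold in general does \emph{not} descend to $M$. This is precisely what the remark preceding \cref{Cor:LocGeomUniqII} in the paper warns about: ``there is no reason to believe that the vector field fulfills the fitting condition necessary for re-gluing. Therefore we cannot apply \cref{Thm:RigidSpacetime} directly to $M$.'' Your suggestion to derive the period vanishing from the requirement that the Killing development glue up smoothly is circular --- that compatibility is exactly what the vanishing would be needed to ensure, and there is no independent source for it.

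The paper's actual route sidesteps this by passing to the infinite cyclic cover $\R\times F\to M$, not by descending $V$. On the cover, $V = u(e_0+\nu)$ extends using the relation $u\circ\sigma=\lambda u$ for some constant $\lambda>0$ (which equals the exponential of exactly the period you were hoping to kill). The crucial trick is then that the Killing development of the covering data carries a \emph{compatible} isometric $\Z$-action generated by $\overline{\sigma}\colon (v,s,p)\mapsto(\lambda^{-1}v, s+\ell, \phi^{-1}(p))$: the $\lambda^{-1}$-scaling in the $v$-coordinate exactly compensates the failure of $V$ to be $\sigma$-invariant, since $\sigma^* U^\flat = \lambda U^\flat$ while $\overline{\sigma}^* \upd v = \lambda^{-1}\upd v$, so $\overline{\sigma}^*\overline{g}=\overline{g}$. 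The quotient is the DEC spacetime extension of $(M,g,k)$. For uniqueness one checks local rigidity of the triple on $\R\times F$ (by applying \cref{Thm:RigidSpin,Thm:RigidMOTS,Thm:ExParVF} to segments $[a,b]\times F$), applies \cref{Thm:RigidSpacetime} on the cover, and then shows that the resulting isometry between neighborhoods of $\R\times F$ intertwines the deck actions and hence descends. This also avoids your final step of extending isometries across a codimension-two set by continuity, which is delicate and unnecessary once one works equivariantly on the cover.
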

\begin{Bem}
	Although the conditions of \cref{Cor:LocGeomUniqII} imply that after cutting $M$ along $F$, the initial data set carries a future-lightlike $\overline{\nabla}$-parallel vector field, this does not need be the case for the original initial data set $(M,g,k)$:
	There is no reason to believe that the vector field fulfills the fitting condition necessary for re-gluing.
	Therefore we cannot apply \cref{Thm:RigidSpacetime} directly to $M$.
\end{Bem}
\begin{proof}[Proof of \cref{Cor:LocGeomUniqII}]
	Cutting $M$ along $F$, we obtain a manifold whose boundary consists of two copies of $F$ -- one where $\nu$ is inward-pointing and another one where it points outwards.
	It comes with an initial data set obtained from $(g,k)$ by cutting, which satisfies the assumptions of Theorems~\labelcref{Thm:RigidSpin} or~\labelcref{Thm:RigidMOTS}.
	From these theorems, we obtain a diffeomorphism of the cut manifold to $[0,\ell] \times F$.
	The identification of the two boundaries in the original manifold $M$ provides us with a diffeomorphism $\phi \in \mathrm{Diff}(F)$ such that $M$ is diffeomorphic to the mapping torus of $\phi$.
	More precisely, the $M \cong (\R \times F)/\Z$, where the $\Z$-action is generated by $\sigma \colon (s,p) \mapsto (s + \ell, \phi^{-1}(p))$ with $s \in \R$, $p \in F$.
	
	Next we note that the induced $\Z$-invariant initial data set on the covering $\R \times F$ of $M$ carries a lightlike $\overline{\nabla}$-parallel vector field $V = ue_0 - U = u(e_0 +\nu)$, where as before $\nu$ is the unit normal on the canonical leaves pointing in positive $\R$-direction.
	The function $u$ is obtained from \cref{Thm:RigidSpin} or~\cref{Thm:RigidMOTS,Thm:ExParVF}, respectively, on the fundamental domain $[0,\ell] \times F$ of the covering $\R \times F \to M$.
	Since $F$ is connected, uniqueness of the parallel transport implies that there is a constant $\lambda \in \R_{>0}$ such that $u(\ell,p) = \lambda u(0,\phi(p))$ for all $p \in F$.
	Thus it extends to $\R \times F$ using the relation $u \circ \sigma = \lambda u$.
	The Killing development of the initial data set on $\R \times F$ with respect to $V$ satisfies the DEC by \cref{Prop:KDSatDEC}.
	Moreover, it has a compatible isometric $\Z$-action generated by $\overline{\sigma} \colon (v,s,p) \mapsto (\lambda^{-1} v, s + \ell, \phi^{-1}(p))$ for $v,s \in \R$, $p \in F$.
	Thus the quotient $(\R \times \R \times F)/\Z$ with the induced metric is a DEC spacetime extension of $(M,g,k)$.
	
	Now observe that the initial data set on $\R \times F$ together with the vector field $V$ from above satisfies the local rigidity property.
	For any point $(s,p) \in \R \times F$, this follows by applying \cref{Thm:RigidSpin} or~\cref{Thm:RigidMOTS,Thm:ExParVF}, respectively, to a suitable segment $[a,b] \times F$ containing $(s,p)$.
	We thus obtain local geometric uniqueness of the DEC spacetime extension of the initial data set on the covering manifold $\R \times F$.
	Now given two DEC spacetime extensions $(\overline{M}_1, \overline{g}_2)$ and  $(\overline{M}_2, \overline{g}_2)$ of $(M,g,k)$, we may restrict both to tubular neighborhoods of $M$ (which are then homotopy equivalent to $M$) and consider their coverings corresponding to $\R \times F \to M$.
	Due to local geometric uniqueness, these coverings contain small neighborhoods $W_1$ and $W_2$ of $\R \times F$, respectively, that are isometric to each other.
	Moreover, the isometry $\Phi \colon W_1 \to W_2$ can be chosen to fix $\R \times F$ and the time-orientation.
	These conditions imply that for all $k \in \Z$, the equation $\overline{\sigma}_2^k \circ \Phi = \Phi \circ \overline{\sigma}_1^k$ holds on all of $W_1 \cap \overline{\sigma}_1^{-k}(W_1)$, where $\overline{\sigma}_i$ denotes the canonical $\Z$-action on the covering of $\overline{M}_i$ for $i = 1,2$.
	Hence, the translates $(\overline{\sigma}_2^k \circ \Phi \circ \overline{\sigma}_1^{-k})_{k \in \Z}$ glue together and $\Phi$ descends to an isometry between open neighborhoods of $M$ in $(\overline{M}_1, \overline{g}_1)$ and $(\overline{M}_2, \overline{g}_2)$.
\end{proof}

\begin{proof}[Proof of~\cref{Cor:LocGeomUniqAsympt}]
	First of all, the existence of a DEC spacetime extension is already contained in \cref{Thm:RigidAsympt} as was remarked in the introduction.
	We may assume that $M$ has only one end.
	This part of \cref{Thm:RigidAsympt} is \cite[Thm.~3.1~(2)]{Hirsch.Zhang:2024p}.
	We already mentioned in the introduction that local geometric uniqueness holds for DEC spacetime extensions of vacuum initial data sets due to the conservation theorem \cite[Sec.~4.2]{Hawking.Ellis:1973} and local uniqueness in the solution of the vacuum Cauchy problem \cite{Foures-Bruhat:1952}.
	We may thus restrict our attention to the case $E = |P| > 0$ for otherwise $(M,g,k)$ is a vacuum initial data set (embedding into Minkowski space) by the classical positive mass theorem in the spin case \cite{Witten:1981}.

	We choose and fix an asymptotic chart $\Phi \colon M \setminus K \cong \R^n \setminus \Clos(B_R(0))$, where $K$ is a compact subset and $R>0$.
	Let $Ee_0 + P \in \R^{n,1}$ be the ADM total energy-momentum vector with respect to this chart.
	Furthermore, the chart allows us to canonically identify $\overline{T}_pM \cong \R^{n,1}$ for all $p \in M \setminus K$.
	We claim that on any connected open neighborhood of infinity, there can be at most one lightlike $\overline{\nabla}$-parallel vector field that is asymptotic to $Ee_0 + P$.
	Suppose there are two, $V_1$ and $V_2$.
	Then $V_1-V_2$ is also $\overline{\nabla}$-parallel and hence $p \mapsto \overline{g}_p(V_1(p)-V_2(p), V_1(p)-V_2(p))$ is (locally) constant.
	Since $V_1(p)-V_2(p) \lto 0$ for $p \lto \infty$ and $\overline{g}_p$ is asymptotic to the Minkowski metric on $\R^{n,1}$, this constant has to be zero.
	Hence $V_1 - V_2$ is lightlike or zero everywhere and this is only possible if $V_1$ and $V_2$ are scalar multiples of each other.
	But since they are both asymptotic to the non-zero vector $Ee_0 + P$, the scaling factor must be one, so they are equal.
	
	\Cref{Thm:RigidAsympt} states that $\frac{\del}{\del v}_{|M}$ is such a vector field.
	This part of the theorem is obtained from \cite{Hirsch.Zhang:2024p} as follows.
	First of all, by simultaneously rescaling the coordinates $v$ and $s$, it suffices to show that the corresponding coordinate vector field (or its negative) in~\cite[(5.24)]{Hirsch.Zhang:2024p} is asymptotic to a non-zero scalar multiple of $Ee_0 + P$.
	It equals the vector field $-\frac{\del}{\del \tau}$ in the Killing development~\cite[(5.23)]{Hirsch.Zhang:2024p}.
	We can now read off that tangential part of $\frac{\del}{\del \tau}$, what we called $-U$, is given by\footnote{In the notation of \cite{Hirsch.Zhang:2024p}; we would call it $\grad^g(s)$ since $u$ usually has a different meaning in the present paper.} $\grad^g(u)$.
	Hence we have to study $|\upd u|_g e_0 + \grad^g(u)$.
	Note that indeed this lightlike vector field is $\overline{\nabla}$-parallel:
	Due to \cite[Thm.~3.1~(1)]{Hirsch.Zhang:2024p} it satisfies the first equation of~\eqref{eq:OlPar}, which actually implies the second one in the lightlike case.
	Its asymptotic value equals the one of $|\upd u_\infty|_g e_0 + \grad^g(u_\infty)$ as $u - u_\infty \in C^{3,\alpha}_{1-q}$ with $u_\infty(p) \coloneqq \langle \frac{P}{E}, \Phi(p) \rangle$, $p \in M \setminus K$.
	Since $g$ is $C^{2,\alpha}_{-q}$-close to the pullback of the Euclidean metric, the asymptotic value is $e_0+ \frac{P}{E}$, as desired.
	
	Now we have everything at hand to prove the local rigidity property of $(g,k,\frac{\del}{\del v}_{|M})$ from \cref{Thm:RigidSpacetime}.
	We simply choose for any neighborhood $\tilde{W}$ of $p \in M$ a neighborhood $W \subseteq \tilde{W}$ with the properties that $W$ is bounded, the open subset $\Int(M \setminus W)$ is connected and $M \setminus W = \Clos(\Int(M \setminus W))$, which is the case for a sufficiently small ball around $p$.
	Then any DEC initial data set $(g^\prime,k^\prime)$ that coincides with $(g,k)$ outside of $W$ satisfies the same asymptotic conditions and so, by \cref{Thm:RigidAsympt}, also admits a lightlike $\overline{\nabla}$-parallel vector field $V^\prime$ asymptotic to $Ee_0 + P$.
	Since $\Int(M \setminus W)$ is a connected open neighborhood of infinity, $V^\prime$ has to coincide there with the original $\frac{\del}{\del v}_{|M}$ by the uniqueness property shown above.
	By continuity, it also coincides on all of $M \setminus W$.
	We may thus apply \cref{Thm:RigidSpacetime}, which completes the proof.
\end{proof}

\appendix	
\section{The spacetime DEC for pp-wave spacetimes}
In this appendix, we consider pp-wave spacetimes arising as the Killing development of a DEC initial data set with lightlike $\overline{\nabla}$-parallel vector field.
We show a sufficient criterion for them to satisfy the spacetime DEC.

\begin{Prop} \label{Prop:KDSatDEC}
	Let $(g,k)$ be a DEC initial data set on a manifold $M$.
	Assume that it carries a future-lightlike $\overline{\nabla}$-parallel vector field $V = ue_0 - U$ such that the induced metric on all the leaves of the foliation defined by $U^\perp$ is Ricci-flat.
	Then the Killing development~\eqref{eq:KD} of $(g,k,V)$ satisfies the (spacetime) DEC.
\end{Prop}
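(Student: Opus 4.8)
The plan is to show that the Killing development \eqref{eq:KD} has Einstein tensor $\Ein^{\overline{g}}=\tfrac{\rho}{u^2}\,V^\flat\otimes V^\flat$, where $V=\partial_v$ is future-lightlike and $\rho\geq 0$; the spacetime DEC then follows immediately, since for causal $X,Y$ in the same half of the light cone we get $\Ein^{\overline{g}}(X,Y)=\tfrac{\rho}{u^2}\,\overline{g}(V,X)\,\overline{g}(V,Y)\geq 0$ (the two inner products have the same sign). First I would unpack the hypothesis on $M$. Writing $\tilde{\nu}\coloneqq U/|U|_g$, the equation $\overline{\nabla}V=0$ together with $\overline{g}(V,V)=0$ forces $u=|U|_g>0$, $\nabla_X U=u\,k(X,\blank)^\sharp$ and $\upd u(X)=k(U,X)$; in particular $\nabla U^\flat$ is symmetric, so $\upd U^\flat=0$, the distribution $U^\perp$ is integrable, and \eqref{eq:ParVsK} (with $e_0+\nu$ replaced by $e_0-\tilde{\nu}$, since $V=u(e_0-\tilde{\nu})$) shows that each leaf $F$ of $U^\perp$ has $\chi^+=0$ and second fundamental form $\pm k|_{TF\otimes TF}$ inside $M$. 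Thus, around any (closed) leaf, the reduction to \cref{Set:Prod} explained before that setup puts us in its setting with scalar-flat leaves, and \cref{Prop:AddRigid} applies: it gives $\rho+j(\nu)=0$ with $\nu=-\tilde{\nu}$, whence the DEC forces $\rho=|j|_g\geq 0$, $j=\rho\,\tilde{\nu}^\flat$ (in particular $j$ vanishes on $TF$), and $\rho\,e_0^\flat-j=\tfrac{\rho}{u}V^\flat$ (consistent with the form of the Einstein tensor dictated by \cite[Lem.~3]{Gloeckle:2024_b}, though I would derive what follows directly).

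Next I would pass to the Killing development $(\R\times M,\overline{g})$. By the Beig--Chru\'sciel construction, $V$ extends to a future-lightlike parallel (hence Killing) vector field $\partial_v$, and $M=\{0\}\times M$ sits inside as a spacelike hypersurface with induced data $(g,k)$; so $\Ric^{\overline{g}}(\partial_v,\blank)\equiv 0$, $\overline{R}(\blank,\blank)\partial_v\equiv 0$, and along $M$ the Gauss and Codazzi equations read $\Ein^{\overline{g}}(e_0,e_0)=\rho$ and $\Ein^{\overline{g}}(e_0,X)=j(X)$ for $X\in TM$. From $j(U)=\rho\,g(\tilde{\nu},U)=u\rho$ one gets $\Ein^{\overline{g}}(V,e_0)=u\rho-j(U)=0$, while $\Ric^{\overline{g}}(V,\blank)=0$ gives $\Ein^{\overline{g}}(V,e_0)=-\tfrac12\scal^{\overline{g}}\,\overline{g}(V,e_0)=\tfrac u2\scal^{\overline{g}}$; hence $\scal^{\overline{g}}=0$ along $M$, and everywhere by $\partial_v$-invariance. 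Working in the frame $(e_0,\tilde{\nu},E_1,\dots,E_{n-1})$ with $(E_i)$ orthonormal in a leaf, the constraints together with $\Ein^{\overline{g}}(V,\blank)=-\tfrac12\scal^{\overline{g}}V^\flat=0$ (using $V=u(e_0-\tilde{\nu})$) show that every component of $\Ein^{\overline{g}}$ vanishes except $\Ein^{\overline{g}}(e_0,e_0)=\Ein^{\overline{g}}(e_0,\tilde{\nu})=\Ein^{\overline{g}}(\tilde{\nu},\tilde{\nu})=\rho$, the only genuine computation being $\Ein^{\overline{g}}(E_i,E_j)=\Ric^{\overline{g}}(E_i,E_j)=0$. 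For this I would use that the leaf $F$ is a spacelike codimension-$2$ submanifold of $\overline{M}$ whose second fundamental form, by \eqref{eq:OlConn} and $\chi^+=0$, is $\mathrm{II}^F(X,Y)=k(X,Y)(e_0-\tilde{\nu})=\tfrac{k(X,Y)}{u}V$, hence valued in the null line $\langle V\rangle$; so the correction terms in the Gauss equation for $F\subset\overline{M}$ vanish, giving $\overline{g}(\overline{R}(X,Y)Z,W)=g^F(R^F(X,Y)Z,W)$ for $X,Y,Z,W\in TF$. Tracing, and using $\overline{R}(\blank,\blank)V=0=\overline{R}(V,\blank)\blank$ to make the $e_0$- and $\tilde{\nu}$-slot contributions to the Ricci trace cancel against one another, yields $\Ric^{\overline{g}}(E_i,E_j)=\Ric^{g_\sigma}(E_i,E_j)=0$ by the Ricci-flatness of the leaves.

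Since $V^\flat(e_0)=V^\flat(\tilde{\nu})=-u$ and $V^\flat(E_i)=0$, comparing values on the frame gives $\Ein^{\overline{g}}=\tfrac{\rho}{u^2}V^\flat\otimes V^\flat$ along $M$, and by $\partial_v$-invariance (both sides are $\partial_v$-invariant and $M$ meets every $\partial_v$-orbit) on all of $\R\times M$; with $\tfrac{\rho}{u^2}\geq 0$ this gives the DEC as in the first paragraph. I expect the main obstacle to be the vanishing of the leaf-direction part of $\Ric^{\overline{g}}$ — identifying the leaves as codimension-$2$ submanifolds with null-valued second fundamental form and keeping track of the curvature identities forced by $\overline{\nabla}V=0$ — and, upstream of it, the fact that the input $j|_{TF}=0$ (equivalently $\rho=|j|_g$) really rests on the marginal-DEC/MOTS mechanism of \cref{Prop:AddRigid}, so that the leaves must be closed, which is the case in all the intended applications of the proposition.
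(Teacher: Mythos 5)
Your proof follows essentially the same route as the paper's — the heart of the argument in both cases is that each leaf of the foliation $U^\perp$ is a spacelike codimension-$2$ submanifold of the Killing development whose second fundamental form is valued in the null line $\langle V\rangle$, so the Gauss-equation correction terms vanish and $\Ric^{\overline{g}}$ restricted to leaf directions agrees with $\ric^{g_\tau}=0$; combined with $\overline{R}(\cdot,\cdot)V=0$ this yields $\scal^{\overline{g}}=0$ and $\Ein^{\overline{g}}=\tfrac{\rho}{u^2}V^\flat\otimes V^\flat$. The core computation and the final form of the Einstein tensor are correct.

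However, you take an avoidable detour through \cref{Prop:AddRigid} to establish $\rho+j(\nu)=0$ (equivalently $j|_{TF}=0$), and you correctly note that this forces you to assume the leaves are closed, which is \emph{not} a hypothesis of \cref{Prop:KDSatDEC}. This is where the paper's proof differs in organization and is strictly more general. The paper first derives $\ric^{\overline{g}}(X,Y)=\ric^{g_\tau}(X,Y)=0$ for leaf-tangent $X,Y$ from the codimension-$2$ Gauss equation; then computes
\begin{gather*}
\scal^{\overline{g}} = -\ric^{\overline{g}}(e_0-\nu,e_0+\nu) + \sum_{i}\ric^{\overline{g}}(e_i,e_i) = 0
\end{gather*}
using only $\ric^{\overline{g}}(\cdot, e_0+\nu)=\tfrac1u\ric^{\overline{g}}(\cdot,V)=0$ and the leaf Ricci-flatness; and only then reads off
\begin{gather*}
0 = \ric^{\overline{g}}(e_0, e_0+\nu) = \rho + j(\nu) \geq \rho - |j|_g \geq 0,
\end{gather*}
which is the equation~\eqref{eq:MargDEC} you wanted. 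Thus the paper obtains $\rho+j(\nu)=0$ as a \emph{consequence} of the Killing-development geometry, entirely locally and without any integral argument. In your proof, by contrast, you need $j(U)=u\rho$ as an \emph{input} to show $\scal^{\overline{g}}=0$, and you only have it from the integration over compact leaves in \cref{Prop:AddRigid}. Since you already carry out the leaf-direction Gauss computation anyway, the fix is purely a matter of reordering: establish the leaf Ricci vanishing first, deduce $\scal^{\overline{g}}=0$ from the frame-sum formula as above, and then $\rho+j(\nu)=0$ drops out; the appeal to \cref{Prop:AddRigid} and the closedness assumption can then be deleted.

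One further small remark: you write $j=\rho\,\tilde{\nu}^\flat$ and $\Ein^{\overline{g}}(e_0,\tilde{\nu})=\rho$; in the paper's convention $\nu=-U/u=-\tilde{\nu}$, so this reads $j=-\rho\nu^\flat$ and $\ric^{\overline{g}}(e_0,\nu)=-\rho$ as in the paper. Your signs are consistent with your own convention, but be careful that $\nu$ in \cref{Prop:AddRigid} is the normal pointing in the direction of growing $s$, i.e.\ $\nu=-\tilde{\nu}$ when the flow of $s$ is in the $-U$ direction, which is what makes $V=u(e_0+\nu)$ future-lightlike.
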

\begin{proof}
	Since the Killing development $(\overline{M} = \R \times M,\overline{g})$ carries a lightlike Killing vector field $\frac{\del}{\del v}$ (extending $V$), it suffices to check the condition along some Cauchy hypersurface.
	We choose $M = \{0\} \times M$ for this purpose.
	Let us first calculate the scalar curvature of $(\overline{M},\overline{g})$.
	We denote by $e_1, \ldots, e_{n-1}, \nu$ a local orthonormal frame of $TM$, where $\nu = -\frac{U}{u}$.
	Let $(F_\tau,g_\tau)$ be a leaf with its induced metric.
	For $p \in F_\tau$ and $X,Y,Z,W \in T_pF_\tau$ the Gauß equation implies
	\begin{align*}
	\overline{g}(\overline{R}(X,Y)Z,W) &= g_\tau(R^{g_\tau}(X,Y)Z,W) + \frac{1}{2}\overline{g}(\overline{\nabla}_X (e_0+\nu),W)\overline{g}(\overline{\nabla}_Y (e_0-\nu),Z)  \\
	&\phantom{=g_\tau(R^{g_\tau}(X,Y)Z,W)}\;+ \frac{1}{2}\overline{g}(\overline{\nabla}_X (e_0-\nu),W)\overline{g}(\overline{\nabla}_Y (e_0+\nu),Z) \\ &\phantom{=g_\tau(R^{g_\tau}(X,Y)Z,W)}\;-\frac{1}{2}\overline{g}(\overline{\nabla}_X (e_0+\nu),Z)\overline{g}(\overline{\nabla}_Y (e_0-\nu),W) \\
	&\phantom{=g_\tau(R^{g_\tau}(X,Y)Z,W)}\;-\frac{1}{2}\overline{g}(\overline{\nabla}_X (e_0-\nu),Z)\overline{g}(\overline{\nabla}_Y (e_0+\nu),W) \\
	&= g_\tau(R^{g_\tau}(X,Y)Z,W),
	\end{align*}
	where we used that $\overline{\nabla}(e_0+\nu) = -(\upd \log u) \cdot (e_0+\nu)$. 
	For any $X,Y \in T_pF_\tau$, we thus get
	\begin{equation} \label{eq:RicciVanish}
	\begin{aligned}
	\ric^{\overline{g}}(X,Y) &= -\frac{1}{2}\overline{g}(\overline{R}(X,e_0-\nu) (e_0+\nu), Y) - \frac{1}{2}\overline{g}(\overline{R}(X,e_0+\nu) (e_0-\nu), Y) \\
	&\phantom{=}\;+ \sum_{i=1}^{n-1} \overline{g}(\overline{R}(X,e_i) e_i, Y) \\
	&= \ric^{g_\tau}(X,Y) = 0
	\end{aligned}
	\end{equation}
	
	because $\overline{g}(\overline{R}(X,e_0+\nu) (e_0-\nu), Y) = \overline{g}(\overline{R}(Y,e_0-\nu) (e_0+\nu), X) = 0$ as $V = u(e_0+\nu)$ is $\overline{\nabla}$-parallel.
	Similarly, we obtain
	\begin{gather*}
	\scal^{\overline{g}} = -\ric^{\overline{g}}(e_0-\nu, e_0+\nu) + \sum_{i=1}^{n-1} \ric^{\overline{g}}(e_i, e_i) = 0.
	\end{gather*}
	Hence, Einstein and Ricci curvature of $(\overline{M},\overline{g})$ coincide.
	Now
	\begin{gather} \label{eq:MargDEC}
	0 = \ric^{\overline{g}}(e_0,e_0+\nu) = \rho + j(\nu) \geq \rho - |j| \geq 0
	\end{gather}
	shows that $j = -\rho \nu^\flat$.
	This implies $\ric^{\overline{g}}(e_0,X) = j(X) = 0$ for all $X \in TF_\tau$.
	Moreover, due to $\ric^{\overline{g}}(e_0+\nu,X) = 0$, we also have $\ric^{\overline{g}}(\nu,X) = 0$ for all $X \in TF_\tau$.
	So in view of~\eqref{eq:RicciVanish}, the only possibly non-zero components of the Einstein tensor with respect to $e_0,e_1, \ldots, e_{n-1},\nu$ are
	\begin{align*}
	\ric^{\overline{g}}(e_0,e_0) &= \rho, & \ric^{\overline{g}}(e_0,\nu) &=\ric^{\overline{g}}(\nu,e_0) = j(\nu) = -\rho &&\text{and} & \ric^{\overline{g}}(\nu,\nu) &= \rho,
	\end{align*}
	where the last equation results from $\ric^{\overline{g}}(\nu,e_0+\nu) = 0$.
	It is now apparent that the DEC is satisfied.
\end{proof}

\begin{Bem}
	The proof in particular shows that the Killing development of a DEC initial data set admitting a lightlike $\overline{\nabla}$-parallel vector field does not automatically satisfy the spacetime DEC.
	More precisely, let us assume that the induced metrics on the leaves are just scalar-flat.
	Then the proof essentially goes through in the same way; the only adaption being that there might be further non-vanishing components of the Einstein curvature given by  $\ric^{\overline{g}}(X,Y) = \ric^{g_\tau}(X,Y)$ for $X,Y \in TF_\tau$.
	Now, \cite[Lem.~3]{Gloeckle:2024_b} shows that DEC is satisfied if and only if $\ric^{g_\tau} \equiv 0$ for all $\tau$.
\end{Bem}

\section{\cref{Lem:Closed} from the point of view of the second Bianchi identity} \label{Sec:AppBianchi}
\Cref{Lem:Closed} may be seen as a consequence of the second Bianchi identity for an ambient Lorentzian manifold.
In this appendix, we give the corresponding proof that is a little bit longer than the one in the main text.
It starts with choosing an embedding of $(M,g,k)$ into a spacetime $(\overline{M}, \overline{g})$ such that $g$ is the induced metric on $M$ and $k$ is the induced second fundamental form on $M$ with respect to the future unit normal $e_0$.
This is always possible -- we are not imposing any energy condition on the Lorentzian manifold.
By its definition~\eqref{eq:DefLambda}, $\lambda$ is given in terms of the $(0,4)$-curvature tensor $\overline{R}$ of $(\overline{M},\overline{g})$ as $\lambda(X) = \overline{R}(X,\nu, e_0, \nu)$ for all $X \in TF_{\tau}$.
	
\begin{proof}[Alternative proof of~\cref{Lem:Closed}]
	In the following, let $X,Y$ be vector fields of $M$ that are tangent to the leaves of the foliation.
	We extend the vector fields $X, Y, e_0, \nu$ to $\overline{M}$ so that all the expressions in the following calculation make sense, although the equations themselves only hold on $F_{\tau}$ (where $\lambda$ is defined).
	First of all, we have
	\begin{align*}
	\upd \lambda (X,Y) &= \del_X \lambda(Y) - \del_Y \lambda(X) - \lambda([X,Y]) \\
	&= (\overline{\nabla}_X \overline{R}(\blank,\nu,e_0,\nu))(Y) - (\overline{\nabla}_Y \overline{R}(\blank,\nu,e_0,\nu))(X)		
	\end{align*}
	with
	\begin{align*}
	(\overline{\nabla}_X \overline{R}(\blank,\nu,e_0,\nu))(Y)
	&= (\overline{\nabla}_X \overline{R})(Y,\nu,e_0,\nu) + \overline{R}(Y,\overline{\nabla}_X \nu,e_0,\nu) \\
	&\phantom{=}\;+ \overline{R}(Y,\nu,\overline{\nabla}_X e_0,\nu)
	+ \overline{R}(Y,\nu,e_0,\overline{\nabla}_X \nu) \\
	&= (\overline{\nabla}_X \overline{R})(Y,\nu,e_0,\nu) + k(X,\nu) \overline{R}(Y,e_0,e_0,\nu),
	\end{align*}
	where the last equality is explained as follows.
	Since $\overline{R}(\blank,\blank,e_0,\nu) = \overline{R}(\blank,\blank,e_0+\nu,\nu)$ vanishes on $TF_\tau \otimes TF_\tau$ it is only the first summand of $\overline{\nabla}_X \nu = k(X,\nu) e_0 + \nabla_X \nu$ that contributes for the second term.
	Since only the $F_t$-tangential parts of $\overline{\nabla}_X e_0 = k(X,\blank)^\sharp$ and $\overline{\nabla}_X \nu$ contribute in the third and fourth term, respectively, the last two terms assemble to $\overline{R}(Y,\nu,e_0 + \nu, \nabla_X \nu) = 0$.
	
	Now, using the second Bianchi identity, we get
	\begin{align*}
	\upd \lambda (X,Y) &= -(\overline{\nabla}_{\nu} \overline{R})(X,Y,e_0,\nu) + k(X,\nu) \overline{R}(Y,e_0,e_0,\nu) - k(Y,\nu) \overline{R}(X,e_0,e_0,\nu) \\
	&= -\del_{\nu} \overline{R}(X,Y,e_0,\nu) + \overline{R}(\overline{\nabla}_{\nu} X,Y,e_0,\nu) + \overline{R}(X,\overline{\nabla}_{\nu} Y,e_0,\nu) \\
	&\phantom{=}\;+ \overline{R}(X,Y,\overline{\nabla}_{\nu} e_0,\nu) + \overline{R}(X,Y,e_0,\overline{\nabla}_{\nu} \nu) \\
	&\phantom{=}\;+ k(X,\nu) \overline{R}(Y,e_0,e_0,\nu) - k(Y,\nu) \overline{R}(X,e_0,e_0,\nu). 
	\end{align*}
	Similarly to before, the fourth and the fifth term add up to zero.
	Since $\overline{R}(X,Y,e_0,\nu) = \overline{R}(X,Y,e_0 + \nu,\nu) = 0$ on all of $M$, the first term is zero as well.
	For the second (and likewise the third) term, we observe that only the $F_\tau$-normal part $k(X,\nu) e_0 + g(\nabla_{\nu} X, \nu) \nu$ of $\overline{\nabla}_{\nu} X$ contributes, giving
	\begin{gather*}
	\overline{R}(\overline{\nabla}_{\nu} X,Y,e_0,\nu) = k(X,\nu) \overline{R}(e_0,Y,e_0,\nu) + \upd \log \strphi(X) \overline{R}(\nu,Y,e_0,\nu)
	\end{gather*}
	in view of $g(\nabla_{\nu} X, \nu) = -g(X, \nabla_{\nu} \nu) = k(X,\nu) = \upd \log \strphi(X)$ (\cf\eqref{eq:NablaNuNu}).
	Hence,
	\begin{gather*}
	\upd \lambda (X,Y) = -\upd \log \strphi(X) \overline{R}(Y,\nu,e_0,\nu) + \upd \log \strphi(Y) \overline{R}(X,\nu,e_0,\nu) = (\lambda \wedge \upd \log \strphi)(X,Y),
	\end{gather*}
	which readily implies~\cref{Lem:Closed}.	
\end{proof}

\end{document}